\newtheorem{example}{Example}[section]
       \newtheorem{theorem}{\bf{Theorem}}[section]
       \newtheorem{corollary}{\bf{Corollary}}[section]
       \newtheorem{lemma}{\bf{Lemma}}[section]
       \newtheorem{assumption}{\bf{Assumption}}[section]
\newtheorem{remark}{\bf{Remark}}[section]
        \newtheorem{definition}{\bf{Definition}}[section]
\def\sY{{\mathbb Y}}
\def\sU{{\mathbb U}}
\def\P{{\mathcal P}}
\title{Partially Observed Optimal Stochastic Control: Regularity, Optimality, Approximations, and Learning}
\author{Ali Devran Kara$^1$ and Serdar Y\"uksel$^2$
\thanks{$^{1}$Department of Mathematics at Florida State University
        {\tt\small akara@fsu.edu}}
\thanks{$^{2}$Department of Mathematics and Statistics at Queen's University, Kingston ON, Canada.
        {\tt\small yuksel@queensu.ca}. This research was
    partially supported by the Natural Sciences and Engineering
    Research Council of Canada (NSERC).}
}
\begin{document}

\maketitle
\thispagestyle{empty}
\pagestyle{empty}


\begin{abstract}
In this review/tutorial article, we present recent progress on optimal control of partially observed Markov Decision Processes (POMDPs). We first present regularity and continuity conditions for POMDPs and their belief-MDP reductions, where these constitute weak Feller and Wasserstein regularity and controlled filter stability. These are then utilized to arrive at existence results on optimal policies for both discounted and average cost problems, and regularity of value functions. Then, we study rigorous approximation results involving quantization based finite model approximations as well as finite window approximations under controlled filter stability. Finally, we present several recent reinforcement learning theoretic results which rigorously establish convergence to near optimality under both criteria. 
\end{abstract}


\section{Partially Observed Markov Decision Processes: Introduction and Preliminaries}


%
%
%

Partially observed Markov Decision processes (POMDPs) present challenging mathematical problems with significant applied relevance.  




Consider a stochastic process $\{X_k, k \in \mathbb{Z}_+\}$, where each element $X_k$ takes values in some standard Borel space $\mathbb{X}$, with dynamics described by
\begin{eqnarray}\label{updateEq}
 X_{k+1} &=& F(X_k, U_k, W_k) \label{updateEq1} \\
Y_{k} &=& G(X_k, V_k) \label{updateEq2} 
\end{eqnarray}
where $Y_k$ is an $\mathbb{Y}$-valued measurement sequence; we take $\mathbb{Y}$ also to be some standard Borel space. Suppose further that $X_0 \sim \mu$ for some $\mu\in\P(\mathds{X})$, where $\mathcal{P}(\mathds{X})$ represents the set of all probability measures on $\mathds{X}$. Here, $W_k, V_{k}$ are mutually independent i.i.d. noise processes. This system is subjected to a control/decision process where the control/decision at time $n$, $U_k$, incurs a cost $c(X_k,U_k)$. The decision maker only has access to the measurement process $Y_k$ and $U_k$ causally: An {\em admissible policy} $\gamma$ is a
sequence of control/decision functions $\{\gamma_k,\, k\in \mathbb{Z}_+\}$ such
that $\gamma_k$ is measurable with respect to the $\sigma$-algebra
generated by the information variables
\[
I_k=\{Y_{[0,k]},U_{[0,k-1]}\}, \quad k \in \mathbb{N}, \quad
  \quad I_0=\{Y_0\}.
\]
so that
\begin{equation}
\label{eq_control}
U_k=\gamma_k(I_k),\quad k\in \mathbb{Z}_+
\end{equation}
are the $\mathbb{U}$-valued control/decision
actions and  we use the notation
\[
Y_{[0,k]} = \{Y_s,\, 0 \leq s \leq k \}, \quad U_{[0,k-1]} =
  \{U_s, \, 0 \leq s \leq k-1 \}.
\]
We define $\Gamma$ to be the set of all such (strong-sense) admissible policies. We emphasize the implicit assumption that the control policy also depends on the prior probability measure $\mu$. 

We assume that all of the random variables are defined on a common probability space $(\Omega, {\cal F}, P)$ given the initial distribution on the state, and a policy,  on the infinite product space consistent with finite dimensional distributions, by the Ionescu Tulcea Theorem \cite{HernandezLermaMCP}. We will sometimes write the probability measure on this space as $P_\mu^\gamma$ to emphasize the policy $\gamma$ and the initialization $\mu$. We note that (\ref{updateEq1})-(\ref{updateEq2}) can also, equivalently (via stochastic realization results \cite[Lemma~1.2]{gihman2012controlled} \cite[Lemma~3.1]{BorkarRealization}, \cite[Lemma F]{aumann1961mixed}), be represented with transition kernels: the state transition kernel is denoted with $\mathcal{T}$ so that for Borel $B \subset \mathbb{X}$ \[{\mathcal{T}}(B|x,u) := P(X_1 \in B | X_0=x,U_0=u), \quad.\] We will denote the measurement kernel with $Q$ so that for Borel $B \subset \mathbb{Y}$: \[Q(B|x) := P(Y_0 \in B | X_0=x).\]

For (\ref{updateEq})-(\ref{updateEq2}), we are interested in minimizing either the average-cost optimization criterion
\begin{eqnarray}\label{expCost}
J_{\infty}(\mu,\gamma) := \limsup_{N \to \infty} {1 \over N} E^{\gamma}_{\mu}[\sum_{k=0}^{N-1} c(X_k, U_k)] 
\end{eqnarray}
or the discounted cost criterion (for some $\beta \in (0,1)$
\begin{eqnarray}\label{expDiscCost}
J_{\beta}(\mu,\gamma) :=E^{\gamma}_{\mu}[\sum_{k=0}^{\infty} \beta^k c(X_k, U_k)] 
\end{eqnarray}
over all admissible control policies $\gamma = \{\gamma_0, \gamma_1, \cdots,\} \in \Gamma$ with $X_0 \sim \mu$. With ${\cal P}(\mathbb{U})$ denoting the set of probability measures on $\mathbb{U}$ endowed with the weak convergence topology, we will also, when needed, allow for independent randomizations so that $\gamma_k(I_k)$ is ${\cal P}(\mathbb{U})$-valued for each realization of $I_k$. Here $c: \mathbb{X} \times \mathbb{U} \to \mathbb{R}_+$ is the cost function. 

One may also consider the control-free case where the system equation (\ref{updateEq1}) does not have control dependence; in this case only a decision is to be made at every time stage; $U$ is present only in the cost expression in (\ref{expCost}). This important special case has been studied extensively in the theory of non-linear filtering.

\subsection{Literature review and preliminaries}
In the following, we present a brief literature review on optimal control of POMDPs, before presenting the main results of the article. 

\noindent{\bf POMDPs, separated policies and belief-MDPs.} It is well-known that any POMDP can be reduced to a (completely observable) MDP \cite{Yus76}, \cite{Rhe74}, whose states are the posterior state probabilities, or beliefs, of the observer; that is, the state at time $k$ is
\begin{align}
\pi_k(\,\cdot\,) := P\{X_{k} \in \,\cdot\, | Y_0,\ldots,Y_k, U_0, \ldots, U_{k-1}\} \in {\cal P}(\mathbb{X}). \nonumber
\end{align}
We call this equivalent MDP the belief-MDP\index{Belief-MDP}. The belief-MDP has state space ${\cal P}(\mathbb{X})$ and action space $\sU$. Here, ${\cal P}(\mathbb{X})$ is equipped with the Borel $\sigma$-algebra generated by the topology of weak convergence \cite{Bil99}. Since $\mathbb{X}$ is a Borel space, ${\cal P}(\mathbb{X})$ is metrizable with the Prokhorov metric which makes ${\cal P}(\mathbb{X})$ into a Borel space \cite{Par67}. The transition probability $\eta$ of the belief-MDP can be constructed as follows (see also \cite{Her89}). If we define the measurable function 
\[F(\pi,a,y) := Pr\{X_{k+1} \in \,\cdot\, | \pi_k = \pi, U_k = u, Y_{k+1} = y\}\]
 from ${\cal P}(\mathbb{X})\times\mathbb{U}\times\sY$ to ${\cal P}(\mathbb{X})$ and the stochastic kernel $H(\,\cdot\, | \pi,u) := Pr\{Y_{k+1} \in \,\cdot\, | \pi_k = \pi, U_k = u\}$ on $\sY$ given ${\cal P}(\mathbb{X}) \times \sU$, then $\eta$ can be written as
\begin{align}
\eta(\,\cdot\,|\pi,u) = \int_{\sY} 1_{\{F(\pi,u,y) \in \,\cdot\,\}} H(dy|\pi,u). \label{kernelFilter}
\end{align}
The one-stage cost function $c$ of the belief-MDP is given by
\begin{align}
\tilde{c}(\pi,u) := \int_{\mathbb{X}} c(x,u) \pi(dx). \label{weak:eq8}
\end{align}
With cost function $c(x,u)$ is continuous and bounded on $\mathbb{X} \times \mathbb{U}$, an application of the generalized dominated convergence theorem \cite[Theorem 3.5]{Lan81} \cite[Theorem 3.5]{serfozo1982convergence}, we have that $\tilde{c}(\pi,u)=E^{\pi}[c(x,u)]:=\int \pi(dx) c(x,u): {\cal P}(\mathbb{X}) \times \mathbb{U} \to \mathbb{R}$ is also continuous and bounded, and thus Borel measurable.

In particular, the belief-MDP is a (fully observed) Markov decision process with the components $({\cal P}(\mathbb{X}),\sU,\eta,\tilde{c})$. 

For finite horizon problems and a large class of infinite horizon discounted cost problems, it is a standard result that an optimal control policy will use the belief $\pi_k$ as a sufficient statistic for optimal policies (see \cite{Yus76,Rhe74,Blackwell2}).

{\bf Approximations and Learning for POMDPs.} Studies on POMDPs had primarily been algorithmic and numerical, with rigorous studies applicable to a particular set of problems until recently. In particular, the regularity properties of POMDPs as pioneered in \cite{CrDo02,FeKaZg14} and later generalized to further conditions and criteria in \cite{KSYWeakFellerSysCont,feinberg2022markov,feinberg2023equivalent,kara2020near,demirci2023geometric} have paved the way for rigorous and explicit performance bounds. For approximate optimality on POMDPs, we refer the reader to the detailed review in \cite{SYLTAC2017POMDP} for discounted cost problems and \cite{demirci2023average} for the average cost problem. 

If the agent does not know the underlying dynamics of the observations (transitions and/or channel), then the learning of the solutions to the optimal control problem from observed data is necessary. However, learning for POMDPs has been a challenging problem. Various approaches and studies are available in the literature starting with \cite{singh1994learning}, see e.g. \cite{even2005reinforcement,jin2020sample,xiong2022sublinear,kwon2021rl,azizzadenesheli2016reinforcement} for some of the learning approaches for POMDPs.  In particular, we also cite the recent comprehensive studies \cite{chandak2022reinforcement} and \cite{dong2022simple} which study learning in non-Markov environments. \cite{subramanian2022approximate,seyedsalehi2023approximate} present a general framework on approximation states and their induced optimality and near optimality properties under several uniformity bounds. Some of our explicit analysis here can also be seen in view of these bounds. We also refer the reader to the second tutorial paper \cite{CDCTutorial2024SInhaMahajan} for complementary approaches to the planning and learning problem. 

Our technical approach on learning builds on the mathematical analysis developed in \cite{kara2021convergence} and generalized in \cite{karayukselNonMarkovian} (see also \cite{KSYContQLearning,kara2020near}). We note that more general, non-uniform, bounds are also considered in \cite{KSYContQLearning,kara2020near}.

\noindent{\bf Control-free setup.} For the special case without control, the belief process is known as the (non-linear) filter process, and by the discussion above, this itself is a Markov process. For our paper, this setup will be useful to study the convergence and uniqueness properties involving invariant probability measures for the filter process: The stability properties of such processes has been studied, where the existence of an invariant probability measure for the belief process, as well as the uniqueness of such a measure (i.e., the unique ergodicity property) has been investigated under various conditions, see. e.g. \cite{budhiraja2002invariant} and \cite{chigansky2010complete} which provide a comprehensive discussion on both the ergodicity of the filter process as well as filter stability. \cite[Theorem 2]{DiMasiStettner2005ergodicity} and \cite[Prop 2.1]{van2009uniformSPA} assume that the hidden state process is ergodic and the filter is stable (almost surely or in expectation under total variation); these papers crucially embed the stationary state in the joint process $(x_k,\pi_k)$ and note that when $x_k$ is stationary, the Markov chain defined by this process admits an invariant probability measure. See also \cite{Kaijser,kochman2006simple,chigansky2010complete,kaijser2011markov,Szarek} for further filter stability and unique ergodicity results and a recent review in \cite{anotherLookPOMDPs}.

\subsection{Convergence Notions for Probability Measures}
For the analysis of the technical results, we will use different convergence and distance notions for probability measures. Two important notions of convergences for sequences of probability measures are weak convergence, and convergence under total variation. For some $N\in\mathbb{N}$ a sequence $\{\mu_n,n\in\mathbb{N}\}$ in $\mathcal{P}(\mathbb{X})$ is said to converge to $\mu\in\mathcal{P}(\mathbb{X})$ \emph{weakly} if $\int_{\mathbb{X}}f(x)\mu_n(dx) \to \int_{\mathbb{X}}f(x)\mu(dx)$ for every continuous and bounded $c:\mathbb{X} \to \mathbb{R}$.
One important property of weak convergence is that the space of probability measures on a complete, separable, and metric (Polish) space endowed with the topology of weak convergence is itself complete, separable, and metric \cite{Par67}. One such metric is the bounded Lipschitz metric  \cite[p.109]{villani2008optimal}, which is defined for $\mu,\nu \in {\cal P}(\mathbb{X})$ as 
\begin{equation}\label{BLmetric}
\rho_{BL}(\mu,\nu):=\sup_{\|f\|_{BL}\leq1} | \int f d\mu - \int f d\nu | 
\end{equation}
where \[ \|f\|_{BL}:=\|f\|_\infty+\sup_{x\neq y}\frac{|f(x)-f(y)|}{d(x,y)} \]
and $\|f\|_\infty=\sup_{x\in\mathbb{X}}|f(x)|$.

We next introduce the first order Wasserstein metric. The \emph{Wasserstein metric} of order 1 for two measures $\mu,\nu\in\mathcal{P}(\mathbb{X})$ is defined as
\begin{align*}
  W_1(\mu,\nu) =\inf_{\eta \in \mathcal{H}(\mu,\nu)} \int_{\mathbb{X}\times\mathbb{X}} \eta(dx,dy) |x-y|,
\end{align*}
where $\mathcal{H}(\mu,\nu)$ denotes the set of probability measures on $\mathbb{X}\times\mathbb{X}$ with first marginal $\mu$ and second marginal $\nu$. Furthermore, using the dual representation of the first order Wasserstein metric, we equivalently have
\begin{align*}
W_1(\mu,\nu)=\sup_{Lip(f)\leq 1}\left|\int f(x)\mu(dx)-\int f(x)\nu(dx)\right|
\end{align*}
where $Lip(f)$ is the minimal Lipschitz constant of $f$.

A sequence $\{\mu_n\}$ is said to converge in $W_1$ to $\mu \in \mathcal{P}(\mathds{X})$ if $W_1(\mu_n,\mu)  \to 0$. For compact $\mathbb{X}$, the Wasserstein distance of order $1$ metrizes the weak topology on the set of probability measures on $\mathbb{X}$ (see \cite[Theorem 6.9]{villani2008optimal}). For non-compact $\mathbb{X}$ convergence in the $W_1$ metric implies weak convergence (in particular this metric bounds from above the Bounded-Lipschitz metric \cite[p.109]{villani2008optimal}, which metrizes the weak convergence).

  For probability measures $\mu,\nu \in \mathcal{P}(\mathbb{X})$, the \emph{total variation} metric is given by
  \begin{align*}
    \|\mu-\nu\|_{TV}&=\sup_{f:\|f\|_\infty \leq 1}\left|\int f(x)\mu(dx)-\int f(x)\nu(dx)\right|,
  \end{align*}
  \noindent where the supremum is taken over all measurable real $f$ such that $\|f\|_\infty=\sup_{x\in\mathbb{X}}|f(x)|\leq 1$. A sequence $\mu_n$ is said to converge in total variation to $\mu \in \mathcal{P}(\mathbb{X})$ if $\|\mu_n-\mu\|_{TV}\to 0$.

\section{Regularity Results: Weak Continuity, Wasserstein Continuity, Wasserstein Contraction and Filter Stability}

\subsection{Weak Feller Continuity of the Belief-MDP}

Building on \cite{KSYWeakFellerSysCont} and \cite{FeKaZg14}, this section establishes the weak Feller property of the filter process; that is, the weak Feller property of the kernel defined in (\ref{kernelFilter}) under two different sets of assumptions.

\begin{assumption}\label{TV_channel}
\begin{itemize}
\item[(i)] The transition probability $\mathcal{T}(\cdot|x,u)$ is weakly continuous (weak Feller) in $(x,u)$, i.e., for any $(x_n,u_n)\to (x,u)$, $\mathcal{T}(\cdot|x_n,u_n)\to \mathcal{T}(\cdot|x,u)$ weakly.
\item[(ii)] The observation channel $Q(\cdot|x,u)$ is continuous in total variation, i.e., for any $(x_n,u_n) \to (x,u)$, $Q(\cdot|x_n,u_n) \rightarrow Q(\cdot|x,u)$ in total variation.
\end{itemize}
\end{assumption}

\begin{assumption}\label{TV_kernel}
\begin{itemize}
\item[(i)] The transition probability $\mathcal{T}(\cdot|x,u)$ is continuous in total variation in $(x,u)$, i.e., for any $(x_n,u_n)\to (x,u)$, $\mathcal{T}(\cdot|x_n,u_n) \to \mathcal{T}(\cdot|x,u)$ in total variation.
\item[(ii)] The observation channel $Q(\cdot|x)$ is independent of the control variable.
\end{itemize}
\end{assumption}

\begin{theorem} \cite{FeKaZg14} \label{TV_channel_thm}
Under Assumption \ref{TV_channel}, the transition probability $\eta(\cdot|z,u)$ of the filter process is weakly continuous in $(z,u)$.
\end{theorem}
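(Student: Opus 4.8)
The plan is to verify the weak Feller property straight from the definition (\ref{kernelFilter}). Writing the belief state as $z=\pi$, I would fix an arbitrary convergent sequence $(\pi_n,u_n)\to(\pi,u)$ and an arbitrary bounded continuous $g:\mathcal{P}(\mathbb{X})\to\mathbb{R}$, and show $\int_{\mathbb{Y}}g(F(\pi_n,u_n,y))\,H(dy|\pi_n,u_n)\to\int_{\mathbb{Y}}g(F(\pi,u,y))\,H(dy|\pi,u)$. It is convenient to read this probabilistically: let $\phi_n(\cdot):=\int_{\mathbb{X}}\mathcal{T}(\cdot|x_0,u_n)\,\pi_n(dx_0)$ be the one--step state predictor and $\Theta_n(dx_1,dy):=Q(dy|x_1,u_n)\,\phi_n(dx_1)$ the induced joint law of (next state, next observation), with $\phi,\Theta$ defined analogously for $(\pi,u)$; then $H(\cdot|\pi_n,u_n)$ is the $\mathbb{Y}$--marginal of $\Theta_n$, $F(\pi_n,u_n,\cdot)$ is the conditional law of the next state given the next observation under $\Theta_n$, and the target is to pass to the limit in $E^{\Theta_n}[g(F(\pi_n,u_n,Y_1))]$. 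The first step is routine: for $f\in C_b(\mathbb{X})$, $\int f\,d\phi_n=\int_{\mathbb{X}}\big(\int_{\mathbb{X}}f(x_1)\mathcal{T}(dx_1|x_0,u_n)\big)\pi_n(dx_0)$ where, by Assumption \ref{TV_channel}(i), the inner integrand converges to $\int_{\mathbb{X}}f\,d\mathcal{T}(\cdot|x_0,u)$ continuously in $x_0$ and is uniformly bounded, so the generalized dominated convergence theorem \cite{Lan81,serfozo1982convergence} together with $\pi_n\to\pi$ gives $\phi_n\to\phi$ weakly; and for $h\in C_b(\mathbb{X}\times\mathbb{Y})$, $\int h\,d\Theta_n=\int_{\mathbb{X}}\big(\int_{\mathbb{Y}}h(x_1,y)Q(dy|x_1,u_n)\big)\phi_n(dx_1)$ where, combining the triangle inequality, the total--variation continuity of $Q$ from Assumption \ref{TV_channel}(ii), and dominated convergence, the inner integrand again converges continuously and uniformly boundedly, so $\Theta_n\to\Theta$ weakly; in particular $H(\cdot|\pi_n,u_n)\to H(\cdot|\pi,u)$ weakly.

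The main work is to pass from weak convergence of the joint laws to convergence of the posterior expectations, and this is where Assumption \ref{TV_channel}(ii) is genuinely used: the map sending $\Theta$ to the pair consisting of its $\mathbb{Y}$--marginal and the conditional law of the first coordinate given the second is \emph{not} weakly continuous, and a simple degenerate example shows $E^{\Theta_n}[g(F(\pi_n,u_n,Y_1))]$ may fail to converge under mere weak convergence of $\Theta_n$. The point is that, by total--variation continuity of $Q$, for every fixed Borel $B\subseteq\mathbb{Y}$ the unnormalized posteriors $\mu_n^{B}(dx_1):=Q(B|x_1,u_n)\phi_n(dx_1)$ converge weakly to $\mu^{B}(dx_1):=Q(B|x_1,u)\phi(dx_1)$ (again by the generalized dominated convergence theorem), and their masses $H(B|\pi_n,u_n)=\mu_n^{B}(\mathbb{X})$ converge. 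Taking $B$ to range over a fixed refining sequence of finite partitions of $\mathbb{Y}$ into continuity sets of $H(\cdot|\pi,u)$, this shows that the \emph{coarsened} filters $F_{\mathcal{Q}}(\pi,u,\cdot)$ (conditioning the next state only on the partition cell containing $Y_1$, i.e. the elementary Bayes rule over finitely many cells) depend weak--continuously on $(\pi,u)$ -- indeed $E^{\Theta_n}[g(F_{\mathcal{Q}}(\pi_n,u_n,Y_1))]$ is a finite sum $\sum_i g\big(\mu_n^{B_i}/H(B_i|\pi_n,u_n)\big)H(B_i|\pi_n,u_n)$ over cells of positive mass -- so $E^{\Theta_n}[g(F_{\mathcal{Q}}(\pi_n,u_n,Y_1))]\to E^{\Theta}[g(F_{\mathcal{Q}}(\pi,u,Y_1))]$ for each fixed $\mathcal{Q}$; one then removes the coarsening by refining $\mathcal{Q}$, using $F_{\mathcal{Q}}(\pi,u,Y_1)\to F(\pi,u,Y_1)$ along the filtration (L\'evy/martingale convergence) and the boundedness and continuity of $g$. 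Put differently, these facts establish $F(\pi_n,u_n,\cdot)\to F(\pi,u,\cdot)$ weakly $H(\cdot|\pi,u)$--almost everywhere, which with $H(\cdot|\pi_n,u_n)\to H(\cdot|\pi,u)$ and $\|g\|_\infty<\infty$ lets one conclude by a Skorokhod--representation and bounded--convergence argument.

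Combining the two steps yields $\int g\,d\eta(\cdot|\pi_n,u_n)\to\int g\,d\eta(\cdot|\pi,u)$ for every bounded continuous $g$, hence $\eta(\cdot|z,u)$ is weakly continuous in $(z,u)$. The hard part is the second step: the Bayes update is not robust under weak perturbations of the joint law, so one cannot avoid using the total--variation hypothesis on $Q$, and the delicate technical point is to control the partition--refinement error -- equivalently, the $H$--almost--everywhere weak convergence of the posteriors $F(\pi_n,u_n,\cdot)$ -- \emph{uniformly in} $n$. This is exactly what fails if one only assumes weak continuity of the channel, and it is the reason Assumption \ref{TV_channel}(ii) is stated in total variation.
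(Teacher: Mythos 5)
The paper itself offers no proof of this theorem; it is quoted verbatim from \cite{FeKaZg14}, so your proposal has to be measured against the argument in that reference. Your first stage is correct and follows the same route: the generalized dominated convergence theorem gives weak convergence of the predictors $\phi_n$, then weak convergence of the joint laws $\Theta_n$ on $\mathbb{X}\times\mathbb{Y}$, and, using the total-variation continuity of $Q$, convergence of $H(B|\pi_n,u_n)\to H(B|\pi,u)$ and weak convergence of the unnormalized posteriors $\mu_n^{B}$ for each fixed Borel $B\subseteq\mathbb{Y}$. Your diagnosis of where Assumption \ref{TV_channel}(ii) must enter --- the Bayes map is not weakly continuous in the joint law --- is also the right one.

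The gap is exactly the step you flag as ``delicate'' and then do not carry out: passing from the coarsened filters to the true filter. What your cell-wise facts actually deliver is that, for each $f\in C_b(\mathbb{X})$ and each Borel $B\subseteq\mathbb{Y}$, $\int_B\psi_n^f\,dH_n\to\int_B\psi^f\,dH$, where $\psi_n^f(y):=\int f\,dF(\pi_n,u_n,y)$; that is, weak-$*$ convergence of the posterior densities against $H$. The final bounded-convergence step needs strictly more, namely convergence of $\psi_n^f$ to $\psi^f$ in $H$-measure (your claim of $H$-a.e.\ weak convergence of the posteriors is stronger still and is not established); and weak-$*$ convergence does not imply convergence in measure --- take $H_n=H$ Lebesgue on $[0,1]$ and $\psi_n(y)=\sin(2\pi n y)$, for which $\int_B\psi_n\,dH\to 0$ for every $B$ while $\psi_n\not\to 0$ in measure. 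L\'evy's martingale convergence theorem only gives $F_{\mathcal{Q}_k}(\pi_n,u_n,\cdot)\to F(\pi_n,u_n,\cdot)$ as $k\to\infty$ for each \emph{fixed} $n$, with a rate depending on $\Theta_n$, so it does not supply the uniform-in-$n$ control of the refinement error that your interchange of limits requires. That uniformity (equivalently, the convergence in $H$-probability of the posteriors) is the essential content of the theorem, and it is what \cite{FeKaZg14} proves with a dedicated argument (their Theorem 3.2 and supporting lemmas on integrating uniformly bounded functions converging in measure against measures converging in total variation), exploiting additional structure of the conditional expectations beyond the cell-wise limits you list. As written, your proposal reproduces the correct skeleton but leaves this key step unproved.
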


\begin{theorem} \cite{KSYWeakFellerSysCont} \label{TV_kernel_thm}
Under Assumption \ref{TV_kernel}, the transition probability $\eta(\cdot|z,u)$ of the filter process is weakly continuous in $(z,u)$.
\end{theorem}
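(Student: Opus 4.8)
The plan is to verify the weak Feller property in its equivalent dual form: fix an arbitrary bounded continuous $g:\mathcal{P}(\mathbb{X})\to\mathbb{R}$ and show that
\[
(\pi,u)\longmapsto \int g\,d\eta(\cdot|\pi,u)=\int_{\mathbb{Y}} g\big(F(\pi,u,y)\big)\,H(dy|\pi,u)
\]
is continuous on $\mathcal{P}(\mathbb{X})\times\mathbb{U}$ (weak topology on $\mathcal{P}(\mathbb{X})$). The crucial structural observation is that, since the channel $Q(\cdot|x)$ carries no control dependence (Assumption \ref{TV_kernel}(ii)), this integral depends on $(\pi,u)$ only through the one-step predicted measure $\mu^{\pi,u}(\cdot):=\int_{\mathbb{X}}\pi(dx_0)\mathcal{T}(\cdot|x_0,u)$: under $(\pi,u)$ one has $X_1\sim\mu^{\pi,u}$ and, given $X_1$, $Y_1\sim Q(\cdot|X_1)$, so both the posterior $F(\pi,u,y)$ and the predictive law $H(\cdot|\pi,u)$ of $Y_1$ are functions of $\mu^{\pi,u}$ alone. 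Writing $\Phi(\mu):=\int_{\mathbb{Y}} g\big(\bar F(\mu,y)\big)(\mu Q)(dy)$, where $\bar F(\mu,y)$ is the Bayesian posterior on $\mathbb{X}$ with prior $\mu$, likelihood $Q$ and observation $y$, and $(\mu Q)(\cdot):=\int\mu(dx)Q(\cdot|x)$, it then suffices to prove that $(\pi,u)\mapsto\mu^{\pi,u}$ and $\mu\mapsto\Phi(\mu)$ are both continuous, the former into $(\mathcal{P}(\mathbb{X}),\|\cdot\|_{TV})$ and the latter from $(\mathcal{P}(\mathbb{X}),\|\cdot\|_{TV})$ to $\mathbb{R}$.

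\textbf{Step 1: the predictor is total-variation continuous.} I would show that $\pi_n\to\pi$ weakly and $u_n\to u$ imply $\|\mu^{\pi_n,u_n}-\mu^{\pi,u}\|_{TV}\to0$; this is the only step that uses Assumption \ref{TV_kernel}(i). Given $\epsilon>0$, use tightness of $\{\pi_n\}$ (Prokhorov) to pick a compact $K$ with $\pi_n(K),\pi(K)\ge 1-\epsilon$. For each $x^\ast\in K$, joint continuity of $(x,u)\mapsto\mathcal{T}(\cdot|x,u)$ in total variation gives a neighbourhood $B(x^\ast,\rho)\times B(u,\rho')$ on which $\|\mathcal{T}(\cdot|x,u')-\mathcal{T}(\cdot|x^\ast,u)\|_{TV}<\epsilon$, with $\rho$ chosen so that $\pi(\partial B(x^\ast,\rho))=0$; extract a finite subcover $\{B(x_i,\rho_i)\}_{i=1}^m$ of $K$, disjointify it into Borel sets $A_1,\dots,A_m$ and set $A_0:=\mathbb{X}\setminus\bigcup_{i\ge1}A_i$, so each $A_i$ is a $\pi$-continuity set and $\pi(A_0),\pi_n(A_0)\le\epsilon$. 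For $n$ large (so $u_n\in\bigcap_iB(u,\rho_i')$) one gets, for $x\in A_i$, both $\|\mathcal{T}(\cdot|x,u_n)-\mathcal{T}(\cdot|x_i,u)\|_{TV}<\epsilon$ and $\|\mathcal{T}(\cdot|x,u)-\mathcal{T}(\cdot|x_i,u)\|_{TV}<\epsilon$, whence $\mu^{\pi_n,u_n}$ and $\mu^{\pi,u}$ lie within $O(\epsilon)$ in total variation of $\sum_{i\ge1}\pi_n(A_i)\mathcal{T}(\cdot|x_i,u)$ and $\sum_{i\ge1}\pi(A_i)\mathcal{T}(\cdot|x_i,u)$ respectively. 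Since $\sum_{i\ge1}|\pi_n(A_i)-\pi(A_i)|\to0$ by the Portmanteau theorem, letting $n\to\infty$ and then $\epsilon\to0$ finishes Step 1.

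\textbf{Step 2: $\Phi$ is total-variation continuous.} Suppose $\mu_n\to\mu$ in $\|\cdot\|_{TV}$. Since $\mu_n\to\mu$ setwise, $\lambda:=\tfrac12\mu+\tfrac12\sum_n2^{-n}\mu_n$ is a probability measure dominating $\mu$ and every $\mu_n$; put $\varphi_n:=d\mu_n/d\lambda\to\varphi:=d\mu/d\lambda$ in $L^1(\lambda)$, let $\nu:=\lambda Q$ and let $r(\cdot|y)$ be the disintegration of $\lambda\otimes Q$ over $\mathbb{Y}$. Then $\bar F(\mu_n,y)(dx)=\varphi_n(x)r(dx|y)/\!\int\varphi_n\,dr(\cdot|y)$ and $d(\mu_nQ)/d\nu\,(y)=\int\varphi_n\,dr(\cdot|y)$. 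Because $\int\big(\int|\varphi_n-\varphi|\,dr(\cdot|y)\big)\nu(dy)=\|\varphi_n-\varphi\|_{L^1(\lambda)}\to0$, along a subsequence $\int|\varphi_n-\varphi|\,dr(\cdot|y)\to0$ for $\nu$-a.e.\ $y$, which gives $\bar F(\mu_n,y)\to\bar F(\mu,y)$ setwise (hence weakly) and $d(\mu_nQ)/d\nu\to d(\mu Q)/d\nu$ pointwise $\nu$-a.e., the latter upgraded to $L^1(\nu)$-convergence by Scheffé's lemma. Writing $\Phi(\mu_n)=\int g(\bar F(\mu_n,y))\,(d(\mu_nQ)/d\nu)(y)\,\nu(dy)$ and using $\|g\|_\infty<\infty$ and continuity of $g$, a generalized dominated convergence argument yields $\Phi(\mu_n)\to\Phi(\mu)$ along the subsequence, and the usual subsequence argument promotes this to the full sequence. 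Composing Steps 1 and 2 proves the theorem.

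I expect Step 1 to be the main obstacle: it is where the hypothesis on $\mathcal{T}$ is actually invoked and it requires bridging the gap between weak convergence of the beliefs and total-variation convergence of the predicted measures, the device being tightness combined with a uniform-on-compacts approximation of $\mathcal{T}(\cdot|\cdot,\cdot)$ by a kernel piecewise constant along a partition into $\pi$-continuity sets. Once one is inside the total-variation regime, Step 2 is a routine, if slightly technical, exercise with Radon--Nikodym densities, Scheffé's lemma, and dominated convergence.
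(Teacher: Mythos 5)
The paper states this theorem only by citation to \cite{KSYWeakFellerSysCont} without reproducing a proof, and your argument is correct and follows essentially the same route as that reference: exploit the control-free channel to write $\eta(\cdot|\pi,u)$ as a function of the one-step predictor $\mu^{\pi,u}$ alone, upgrade weak convergence of $(\pi_n,u_n)$ to total-variation convergence of $\mu^{\pi_n,u_n}$ using tightness together with the total-variation continuity of $\mathcal{T}$ (the cited proof packages this as equicontinuity plus generalized dominated convergence rather than your explicit partition into continuity sets, but the idea is the same), and then push total-variation convergence through the Bayes update and the observation marginal via densities and dominated convergence. I see no gaps; the points you leave implicit (the $\nu$-null set where the posterior is undefined, the boundaries of the disjointified cover) are routine and handled exactly as you indicate.
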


%
%

As examples, taken from \cite{KSYWeakFellerSysCont}, suppose that the system dynamics and the observation channel are represented as follows:
\begin{align*}
x_{t+1} &= H(x_t,u_t,w_t),\\
y_t &= G(x_t,u_{t-1},v_t),
\end{align*}
where $w_t$ and $v_t$ are i.i.d. noise processes. 

\begin{itemize}
\item[(i)]
Suppose that $H(x,u,w)$ is a continuous function in $x$ and $u$. Then, the corresponding transition kernel is weakly continuous. 
\item[(ii)]\label{ornii}
Suppose that $G(x,u,v)=g(x,u)+v$, where $g$ is a continuous function and $V_t$ admits a continuous density function $\varphi$ with respect to some reference measure $\nu$. Then, the channel is continuous in total variation. 
\item[(iii)]
Suppose that we have $H(x,u,w)=h(x,u)+w$, where $f$ is continuous and $w_t$ admits a continuous density function $\varphi$ with respect to some reference measure $\nu$. Then, the transition probability is continuous in total variation. 
\end{itemize}

\subsection{Wasserstein Continuity and Contraction Properties of the Belief-MDP}\label{wass_cont}


Recently, \cite{kara2020near} presented the following regularity results for controlled filter processes. Let us first recall the following:

\begin{definition}\cite[Equation 1.16]{dobrushin1956central}[Dobrushin coefficient]
For a kernel operator $K:S_{1} \to \mathcal{P}(S_{2})$ (that is a regular conditional probability from $S_1$ to $S_2$) for standard Borel spaces $S_1, S_2$, we define the Dobrushin coefficient as:
\begin{align}
\delta(K)&=\inf\sum_{i=1}^{n}\min(K(x,A_{i}),K(y,A_{i}))\label{Dob_def}
\end{align}
where the infimum is over all $x,y \in S_{1}$ and all partitions $\{A_{i}\}_{i=1}^{n}$ of $S_{2}$.
\end{definition}



\begin{assumption}\label{main_assumption}
\noindent
\begin{enumerate}
\item \label{compactness}
$(\mathbb{X}, d)$ is a bounded compact metric space 
with diameter $D$ (where $D=\sup_{x,y \in \mathbb{X}} d(x,y)$).
\item \label{totalvar}
The transition probability $\mathcal{T}(\cdot \mid x, u)$ is 
continuous in total variation in $(x, u)$, i.e., 
for any $\left(x_n, u_n\right) \rightarrow(x, u), 
\mathcal{T}\left(\cdot \mid x_n, u_n\right) \rightarrow 
\mathcal{T}(\cdot \mid x, u)$ in total variation.
\item \label{regularity}
There exists 
$\alpha \in R^{+}$such that 
$$
\left\|\mathcal{T}(\cdot \mid x, u)-\mathcal{T}\left(\cdot \mid x^{\prime}, u\right)\right\|_{T V} \leq \alpha d(x, x^{\prime})
$$
for every $x,x' \in \mathbb{X}$, $u \in \mathbb{U}$.
\item \label{CostLipschitz}
There exists $K_1 \in \mathbb{R}^+$ such that
\[|c(x,u) - c(x',u)| \leq K_1 d(x,x').\]
for every $x,x' \in \mathbb{X}$, $u \in \mathbb{U}$.
\item The cost function $c$ is bounded and continuous.
\end{enumerate}
\end{assumption}

\begin{theorem}\label{ergodicity}\cite{demirci2023average}
    Assume that $\mathbb{X}$ and $\mathbb{Y}$ are Polish spaces. 
    If Assumptions \ref{main_assumption}-\ref{compactness},\ref{regularity} are 
    fulfilled, then we have
    $$
    W_{1}\left(\eta(\cdot \mid z_0, u), \eta\left(\cdot \mid z_0^{\prime},u\right)\right) 
    \leq K_2 W_{1}\left(z_0, z_0^{\prime}\right),$$
with    $K_2:=\frac{\alpha D (3-2\delta(Q))}{2}$
    for all $z_0,z_0' \in \cal{P}(\mathbb{X})$, $u \in \mathbb{U}$.
\end{theorem}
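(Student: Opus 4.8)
The plan is to pass to the Kantorovich--Rubinstein dual of the outer Wasserstein distance and ``peel off'' the measurement channel. Since $\mathbb{X}$ is compact, $(\mathcal{P}(\mathbb{X}),W_1)$ is a compact metric space of diameter $D$, so by duality it suffices to bound $\big|\int g\,d\eta(\cdot\,|z_0,u)-\int g\,d\eta(\cdot\,|z_0',u)\big|$ uniformly over $g:\mathcal{P}(\mathbb{X})\to\mathbb{R}$ that are $1$-Lipschitz for $W_1$. Using the explicit form (\ref{kernelFilter}) of $\eta$, this equals $\big|\int_{\mathbb{Y}}g(F(z_0,u,y))\,H(dy|z_0,u)-\int_{\mathbb{Y}}g(F(z_0',u,y))\,H(dy|z_0',u)\big|$, and with $u$ fixed both $F(z_0,u,\cdot)$ and $H(\cdot|z_0,u)=Q\star(\mathcal{T}^uz_0)$ depend on $z_0$ only through the one-step predictor $\mathcal{T}^uz_0(\cdot):=\int\mathcal{T}(\cdot|x,u)\,z_0(dx)$. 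I would insert the mixed term $\int_{\mathbb{Y}}g(F(z_0',u,y))\,H(dy|z_0,u)$ and split the difference into a ``predictor'' part $\int_{\mathbb{Y}}g(F(z_0',u,y))\big(H(dy|z_0,u)-H(dy|z_0',u)\big)$ and a ``filter-sensitivity'' part $\int_{\mathbb{Y}}\big(g(F(z_0,u,y))-g(F(z_0',u,y))\big)H(dy|z_0,u)$.

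Two elementary estimates on the diameter-$D$ space feed everything: any $W_1$-$1$-Lipschitz $g$ has oscillation at most $D$, so any such $g$ integrates against a difference of probability measures to at most $\frac{D}{2}\|\cdot\|_{TV}$, and in particular $W_1(\mu,\nu)\le\frac{D}{2}\|\mu-\nu\|_{TV}$ on $\mathcal{P}(\mathbb{X})$; and coupling optimally together with Assumption \ref{main_assumption}-\ref{regularity} yields $\|\mathcal{T}^u\mu-\mathcal{T}^u\nu\|_{TV}\le\alpha\,W_1(\mu,\nu)$. For the predictor part, $g\circ F(z_0',u,\cdot)$ has oscillation $\le D$, so it is at most $\frac{D}{2}\|H(\cdot|z_0,u)-H(\cdot|z_0',u)\|_{TV}$; the Dobrushin contraction $\|Q\star m-Q\star m'\|_{TV}\le(1-\delta(Q))\|m-m'\|_{TV}$ followed by the two estimates above bounds it by $\frac{\alpha D}{2}(1-\delta(Q))\,W_1(z_0,z_0')$. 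For the filter-sensitivity part, $1$-Lipschitzness of $g$ and $W_1\le\frac{D}{2}\|\cdot\|_{TV}$ reduce it to controlling $\int_{\mathbb{Y}}\|F(z_0,u,y)-F(z_0',u,y)\|_{TV}\,H(dy|z_0,u)$.

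The crux is this last integral, a one-step stability bound for the Bayes operator. Writing $\bar p=\mathcal{T}^uz_0$, $\bar q=\mathcal{T}^uz_0'$, I form the joint laws $\Phi_{\bar p}(dx,dy)=Q(dy|x)\bar p(dx)$ and $\Phi_{\bar q}(dx,dy)=Q(dy|x)\bar q(dx)$ on $\mathbb{X}\times\mathbb{Y}$, which disintegrate (legitimately, $\mathbb{X},\mathbb{Y}$ being Polish) as $F(z_0,u,dx|y)\,m_{\bar p}(dy)$ and $F(z_0',u,dx|y)\,m_{\bar q}(dy)$ with $m_{\bar p}=Q\star\bar p=H(\cdot|z_0,u)$ and $m_{\bar q}=Q\star\bar q$. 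Introducing $\tilde\Phi(dx,dy):=F(z_0',u,dx|y)\,m_{\bar p}(dy)$, the family of posteriors of the prior $\bar q$ integrated against the marginal $m_{\bar p}$ rather than $m_{\bar q}$, a measurable Hahn-selection identity gives $\int_{\mathbb{Y}}\|F(z_0,u,y)-F(z_0',u,y)\|_{TV}\,m_{\bar p}(dy)=\|\Phi_{\bar p}-\tilde\Phi\|_{TV}\le\|\Phi_{\bar p}-\Phi_{\bar q}\|_{TV}+\|\Phi_{\bar q}-\tilde\Phi\|_{TV}$. Because the $\mathbb{X}$-components of $(\bar p-\bar q)_+$ and $(\bar p-\bar q)_-$ are mutually singular, $\|\Phi_{\bar p}-\Phi_{\bar q}\|_{TV}=\|\bar p-\bar q\|_{TV}$; and since $y\mapsto F(z_0',u,\cdot|y)$ is a Markov kernel, $\|\Phi_{\bar q}-\tilde\Phi\|_{TV}\le\|m_{\bar q}-m_{\bar p}\|_{TV}\le(1-\delta(Q))\|\bar p-\bar q\|_{TV}$ by the Dobrushin contraction again. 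Hence the integral is at most $(2-\delta(Q))\|\bar p-\bar q\|_{TV}\le(2-\delta(Q))\,\alpha\,W_1(z_0,z_0')$, the filter-sensitivity part is at most $\frac{\alpha D}{2}(2-\delta(Q))W_1(z_0,z_0')$, and adding the predictor part gives exactly $K_2=\frac{\alpha D\,(3-2\delta(Q))}{2}$.

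I expect the Bayes-operator step to be the main obstacle: one must fix a jointly measurable version of the filter update, select measurable Hahn decompositions of the fibrewise signed measures, and handle the fact that $m_{\bar p}$ and $m_{\bar q}$ need not be mutually absolutely continuous, so that $F(z_0',u,\cdot|y)$ is only specified $m_{\bar q}$-a.e.\ yet must be evaluated along $m_{\bar p}$ in the mixed term; one also has to keep the Dobrushin constant sharp through both the channel contraction of marginals and the singularity identity for joints. The remaining ingredients --- Kantorovich duality on $(\mathcal{P}(\mathbb{X}),W_1)$, the oscillation/total-variation comparisons on a bounded space, and the Lipschitz-in-$W_1$ estimate for $\mathcal{T}^u$ via optimal couplings --- are routine.
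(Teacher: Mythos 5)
Your argument is correct and recovers the constant $K_2=\frac{\alpha D(3-2\delta(Q))}{2}$ exactly; it follows essentially the same route as the proof in the cited reference \cite{demirci2023average} (the paper itself only cites it), namely Kantorovich duality, the splitting into a predictor term contributing $(1-\delta(Q))$ and a Bayes-update sensitivity term contributing $(2-\delta(Q))$ via the joint-measure/disintegration identity, together with the oscillation bound $W_1\le\frac{D}{2}\|\cdot\|_{TV}$ and the coupling estimate $\|\mathcal{T}^u\mu-\mathcal{T}^u\nu\|_{TV}\le\alpha W_1(\mu,\nu)$. The measurability caveats you flag (fixing a version of the filter update and a measurable Hahn selection) are handled the same way there and do not affect the bound.
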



\begin{remark}A recent paper \cite{anotherLookPOMDPs} has presented an alternative approach, without belief-separation, and has arrived further conditions for the existence of optimal policies for discounted and average cost problems as well as the unique ergodicity property for both controlled and control-free setups. Such an approach leads to complementary conditions on the weak Feller property on the state, which considers the entire past as the state endowed with the product topology.
\end{remark}

\subsection{Filter Stability}\label{filter_stability_sec}

The filter stability problem refers to the correction of an incorrectly initialized non-linear filter for a partially observed stochastic dynamical system (controlled or control-free) with increasing measurements. As we will see, this property has significant implications on robustness as well as near optimality of sliding finite window policies with explicit approximation bounds, to be presented in the paper.

Let us describe this property more explicitly: Given a prior $\mu\in \mathcal{P}(\mathbb{X})$ and a policy $\gamma \in {\Gamma}$ we can define the filter and predictor for a POMDP using the (strategic) measure {$P^{\mu,\gamma}$}.
\begin{definition}
\begin{itemize}
\item[(i)] We define the one step predictor process as the sequence of conditional probability measures
\begin{align*}
\pi_{n-}^{\mu,\gamma}(\cdot)=P^{\mu,\gamma}(X_{n} \in \cdot|Y_{[0,n-1]},U_{[0,n-1]}) ~~~~n\in \mathbb{N}
\end{align*}
\item[(ii)]We define the filter process as the sequence of conditional probability measures
\begin{align}\label{filterDefn}
\pi_{k}^{\mu,\gamma}(\cdot)=P^{\mu,\gamma}(X_{k} \in \cdot|Y_{[0,k]},U_{[0,k-1]}) , \quad n\in\mathbb{Z}_+
\end{align}
\end{itemize}
\end{definition}
\begin{remark}\label{remark:filter_control}
Recall that the $U_{[0,k-1]}$ are all functions of the $Y_{[0,k-1]}$, so conditioning on the control actions is not necessary in the above definitions. Yet this conditional probability would be {\it policy dependent}; if we condition on the past actions, this conditioning would be {\it policy-independent}. 
\end{remark} 
Say a prior $\mu \in \mathcal{P}(\mathbb{X})$ and a policy $\gamma \in \Gamma$ are chosen, an observer sees measurements $Y_{[0,\infty)}$ generated via the strategic measure $P^{\mu,\gamma}$. The observer is aware that the policy applied is $\gamma$, but incorrectly thinks the prior is $\nu\neq \mu$. The observer will then compute the incorrectly initialized filter $\pi_{k}^{\nu,\gamma}$ while the true filter is $\pi_{k}^{\mu,\gamma}$. The filter stability problem is concerned with the merging of $\pi_{k}^{\nu,\gamma}$ and $\pi_{k}^{\mu,\gamma}$ as $k$ goes to infinity.

In the literature, there are a number of merging notions when one considers stability which we enumerate here. Let $C_{b}(\mathbb{X})$ represent the set of continuous and bounded functions from $\mathbb{X} \to \mathbb{R}$. We define here the different notions of stability for the filter.

\begin{definition}\label{filterStabilityAsymptotic}
\begin{itemize}
\item[(i)] A filter process is said to be stable in the sense of weak merging with respect to a policy $\gamma$, $P^{\mu,\gamma}$ almost surely (a.s.) if there exists a set of measurement sequences $A \subset \mathcal{Y}^{\mathbb{Z}_{+}}$ with $P^{\mu,\gamma}$ probability 1 such that for any sequence in $A$; for any $f \in C_{b}(\mathcal{X})$ and any prior $\nu$ with $\mu \ll \nu$ (i.e., for all Borel $B$ $\nu(B) = 0 \implies \mu(B)=0$) we have
$
\lim_{n \to \infty} \left|\int f d\pi_{n}^{\mu,\gamma}-\int f d\pi_{n}^{\nu,\gamma}\right|=0
$.

\item[(ii)]  A filter process is said to be stable in the sense of total variation in expectation with respect to a policy $\gamma$ if for any measure $\nu$ with $\mu \ll \nu$ we have
$
\lim_{n \to \infty} E^{\mu,\gamma}[\|\pi_{n}^{\mu,\gamma}-\pi_{n}^{\nu,\gamma}\|_{TV}]=0
$.

\item[(iii)]  A filter process is said to be stable in the sense of total variation with respect to a policy $\gamma$, $P^{\mu,\gamma}$ a.s. {if there exists a set of measurement sequences $A \subset \mathcal{Y}^{\mathbb{Z}_{+}}$ with $P^{\mu,\gamma}$ probability 1 such that for any sequence in $A$;} for any measure $\nu$ with $\mu \ll \nu$ we have
$
\lim_{n \to \infty} \|\pi_{n}^{\mu,\gamma}-\pi_{n}^{\nu,\gamma}\|_{TV}=0~~P^{\mu,\gamma}~a.s.
$.


\item[(iv)]  The filter is said to be \textit{universally} stable in one of the above notions if the notion holds with respect to every admissible policy $\gamma \in \Gamma$.

\end{itemize}
\end{definition}

One of the main differences between control-free and controlled partially observed Markov chains is that the filter is always Markovian under the former, whereas under a controlled model the filter process may not be Markovian since the control policy may depend on past measurements in an arbitrary (measurable) fashion. This complicates the dependency structure, and therefore results from the control-free case do not directly apply to the controlled setup. 

Recall (\ref{Dob_def}) and let us define $ \tilde{\delta}(\mathcal{T}):=\inf_{u \in \mathbb{U}} \delta(\mathcal{T}(\cdot|\cdot,u)). $
\begin{theorem} \cite[Theorem 3.3]{MYDobrushin2020} \label{curtis_result}
Assume that for $\mu,\nu \in \mathcal{P}(\mathbb{X})$, we have $\mu\ll\nu$. Then we have
\begin{align}\label{boundL_t1}
E^{\mu,\gamma}\left[\|\pi_{n+1}^{\mu,\gamma}-\pi_{n+1}^{\nu,\gamma}\|_{TV}\right]\leq \alpha E^{\mu,\gamma}\left[\|\pi_{n}^{\mu,\gamma}-\pi_{n}^{\nu,\gamma}\|_{TV}\right].
\end{align}
where $\alpha:=(1-\tilde{\delta}(\mathcal{T}))(2-\delta(Q))$.
\end{theorem}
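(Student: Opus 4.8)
The plan is to exploit the two-step structure of the controlled nonlinear filter recursion. Passing from $\pi_n^{\cdot,\gamma}$ to $\pi_{n+1}^{\cdot,\gamma}$ decomposes into a \emph{prediction} step, which pushes the current filter through the transition kernel to form the predictor $\pi_{n+1-}^{\cdot,\gamma}(\cdot)=\int_{\mathbb{X}}\mathcal{T}(\cdot\mid x,U_n)\,\pi_n^{\cdot,\gamma}(dx)$, followed by a \emph{Bayesian correction} step, which folds in the new measurement $Y_{n+1}$ through the channel $Q$. The structural fact that rescues the controlled (non-Markovian) case is that $U_n=\gamma_n(I_n)$ is $\mathcal{F}_n$-measurable with $\mathcal{F}_n:=\sigma(Y_{[0,n]},U_{[0,n-1]})$, so it is common to the true filter (initialized at $\mu$) and the incorrectly initialized filter (at $\nu$), and both filters obey the \emph{same} recursion driven by the common data $(U_n,Y_{n+1})$ — only the initial condition differs. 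I would therefore condition on $\mathcal{F}_n$ throughout and bound the two steps separately; the hypothesis $\mu\ll\nu$ guarantees that all the conditional objects below (in particular the Bayes normalizations) are well defined $P^{\mu,\gamma}$-a.s., and that $\pi_n^{\nu,\gamma}$, fed the $P^{\mu,\gamma}$-generated data, genuinely satisfies this recursion.

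For the prediction step, $\pi_{n+1-}^{\mu,\gamma}$ and $\pi_{n+1-}^{\nu,\gamma}$ are obtained by pushing $\pi_n^{\mu,\gamma}$ and $\pi_n^{\nu,\gamma}$ through the \emph{same} kernel $\mathcal{T}(\cdot\mid\cdot,U_n)$, so the standard Dobrushin contraction $\|\lambda_1 K-\lambda_2 K\|_{TV}\le(1-\delta(K))\|\lambda_1-\lambda_2\|_{TV}$, with $K=\mathcal{T}(\cdot\mid\cdot,U_n)$ and $\delta(\mathcal{T}(\cdot\mid\cdot,u))\ge\tilde{\delta}(\mathcal{T})$ for every $u$, yields
\[
\|\pi_{n+1-}^{\mu,\gamma}-\pi_{n+1-}^{\nu,\gamma}\|_{TV}\le(1-\tilde{\delta}(\mathcal{T}))\,\|\pi_{n}^{\mu,\gamma}-\pi_{n}^{\nu,\gamma}\|_{TV}.
\]

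The heart of the proof is the correction step: show that, in expectation over $Y_{n+1}$ given $\mathcal{F}_n$, the Bayes update inflates the total-variation distance by at most the factor $2-\delta(Q)$. Writing the update with densities $\phi^y(x)=\tfrac{dQ(\cdot\mid x)}{d\psi}(y)$ with respect to a reference measure $\psi$ (which can be arranged, or bypassed via a disintegration), the updated filters are $F(\pi_-,y)=\phi^y\pi_-/\pi_-(\phi^y)$ and likewise for $\pi'_-$; adding and subtracting $\phi^y\pi'_-/\pi_-(\phi^y)$ and taking TV norms gives
\[
\|F(\pi_-,y)-F(\pi'_-,y)\|_{TV}\le\frac{\|\phi^y(\pi_--\pi'_-)\|_{TV}}{\pi_-(\phi^y)}+\frac{|\pi_-(\phi^y)-\pi'_-(\phi^y)|}{\pi_-(\phi^y)}.
\]
Conditionally on $\mathcal{F}_n$, the law of $Y_{n+1}$ under $P^{\mu,\gamma}$ is the \emph{true} predictor's output marginal $m(dy)=\pi_-(\phi^y)\,\psi(dy)$ with $\pi_-=\pi_{n+1-}^{\mu,\gamma}$, and integrating the last display against $m$ cancels the denominators: the first term integrates (by Fubini and $\int\phi^y(x)\psi(dy)=1$) to $\|\pi_--\pi'_-\|_{TV}$, and the second integrates to $\|\pi_-Q-\pi'_-Q\|_{TV}$, which by the Dobrushin contraction applied to $Q$ is at most $(1-\delta(Q))\|\pi_--\pi'_-\|_{TV}$. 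Adding these and using $\pi'_-=\pi_{n+1-}^{\nu,\gamma}$ gives
\[
E^{\mu,\gamma}\!\left[\|\pi_{n+1}^{\mu,\gamma}-\pi_{n+1}^{\nu,\gamma}\|_{TV}\mid\mathcal{F}_n\right]\le(2-\delta(Q))\,\|\pi_{n+1-}^{\mu,\gamma}-\pi_{n+1-}^{\nu,\gamma}\|_{TV}.
\]
Composing with the prediction-step bound and taking the full expectation $E^{\mu,\gamma}$ yields the claim with $\alpha=(1-\tilde{\delta}(\mathcal{T}))(2-\delta(Q))$.

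I expect the correction step to be the main obstacle, for two reasons: one must realize that the correct distribution against which to average $Y_{n+1}$ is the \emph{true} predictor's output marginal — this is exactly what makes the normalizing denominators disappear — and one must extract the $(1-\delta(Q))$ gain from the normalization term by recognizing it as $\|\pi_-Q-\pi'_-Q\|_{TV}$ and invoking the Dobrushin coefficient of $Q$; a crude triangle-inequality bound there only gives $2$ rather than $2-\delta(Q)$. The remaining care is measure-theoretic bookkeeping around the Bayes update (existence of the dominating measure, or a clean disintegration on standard Borel spaces) and the a.s.\ validity of the wrong-filter recursion, all handled via $\mu\ll\nu$.
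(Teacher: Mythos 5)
Your proposal is correct, and it reproduces essentially the argument of the cited source \cite[Theorem 3.3]{MYDobrushin2020} (this tutorial states the theorem without proof): a Dobrushin contraction by $1-\tilde{\delta}(\mathcal{T})$ for the prediction step, and the key lemma that the Bayes correction, averaged over $Y_{n+1}$ under the \emph{true} predictor's output marginal so that the normalizations cancel, expands total variation by at most $2-\delta(Q)$, with the $-\delta(Q)$ harvested by recognizing the normalization term as $\|\pi_-Q-\pi'_-Q\|_{TV}$. The only caveat worth recording is that the correction step as written presupposes a dominated channel $Q(dy|x)=\phi^y(x)\psi(dy)$, which is a standing assumption in the original reference, and that $\mu\ll\nu$ propagates to $\pi_{n-}^{\mu,\gamma}\ll\pi_{n-}^{\nu,\gamma}$ so the wrong filter's recursion is well defined $P^{\mu,\gamma}$-a.s., both of which you correctly flag.
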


%

If $\alpha < 1$, by applying the Borel-Cantelli lemma and Markov's inequality, we have that exponential stability in expectation implies the same result in an almost sure sense as well; see \cite[Remark 3.10]{MYDobrushin2020}. This also establishes that the rate of convergence is uniform over all priors $\nu$ as long as $\mu \ll \nu$.

A further method, and one which leads to complementary conditions given the above, for filter stability is via the Hilbert projective metric.

\begin{definition}
    Two non-negative measures $\mu, \nu$ on $(\mathbb{X},\mathcal{B}(\mathbb{X}))$ are comparable, if there exist positive constants $0<a \leq b$, such that
    $$
    a \nu(A) \leq \mu(A) \leq b \nu(A)
    $$
    for any Borel subset $A \subset \mathbb{X}$.
\end{definition}

\begin{definition}[Mixing kernel]\label{mixingKernelDef}
    The non-negative kernel $K$ defined on $\mathbb{X}$ is mixing, if there exists a constant $0<\varepsilon \leq 1$, and a non-negative measure $\lambda$ on $\mathbb{X}$, such that
    $$
    \varepsilon \lambda(A) \leq K(x, A) \leq \frac{1}{\varepsilon} \lambda(A)
    $$
    for any $x \in \mathbb{X}$, and any Borel subset $A \subset \mathbb{X}$.
    \end{definition}

\begin{definition}(Hilbert metric).  Let $\mu, \nu$ be two non-negative finite measures. We define the Hilbert metric on such measures as
    \begin{equation}
    h(\mu, \nu)= \begin{cases}\log \left(\frac{\sup _{A \mid \nu(A)>0} \frac{\mu(A)}{\nu(A)}}{\inf _{A \mid \nu(A)>0} \frac{\mu(A)}{\nu(A)}}\right) & \text { if } \mu, \nu \text { are comparable } \\ 0 & \text { if } \mu=\nu=0 \\ \infty & \text { else }\end{cases}
    \end{equation}
\end{definition}

Note that $h(a\mu, b\nu) = h(\mu, \nu)$ for any positive scalars $a, b$. Therefore, the Hilbert metric is a useful metric for nonlinear filters since it is invariant under normalization, and the following lemma demonstrates that it bounds the total-variation distance.

\begin{lemma}\label{h-TV}\cite[Lemma 3.4]{le2004stability}
    Let $\mu, \nu$ be two non-negative finite measures,
    \begin{enumerate}
    \item[i.] $
    \left\|\mu-\nu\right\|_{TV} \leq \frac{2}{\log 3} h\left(\mu, \nu\right) .
    $
    \item[ii.] 
    If the nonnegative kernel $K$ is a mixing kernel (see Definition \ref{mixingKernelDef}) with constant $\epsilon$, then 
    $
    h\left(K \mu, K \nu\right) \leq \frac{1}{\varepsilon^2}\left\|\mu-\nu\right\|_{TV}.
    $
    \end{enumerate}
\end{lemma}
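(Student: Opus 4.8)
The plan is to handle the two parts separately, in each case reducing the Hilbert metric to the essential range of a Radon--Nikodym-type density and then estimating. For part (i): since $h$ is scale invariant ($h(a\mu,b\nu)=h(\mu,\nu)$) it suffices to treat probability measures $\mu,\nu$, and if $\mu,\nu$ are not comparable the right-hand side is infinite, so assume they are, hence mutually absolutely continuous. Put $r:=d\mu/d\nu$. For any Borel $A$ with $\nu(A)>0$ the ratio $\mu(A)/\nu(A)=\nu(A)^{-1}\int_A r\,d\nu$ is a $\nu$-average of $r$ over $A$, whence $m:=\inf_{\nu(A)>0}\mu(A)/\nu(A)=\operatorname{ess\,inf}_\nu r$ and $M:=\sup_{\nu(A)>0}\mu(A)/\nu(A)=\operatorname{ess\,sup}_\nu r$, so $h(\mu,\nu)=\log(M/m)$, while $\|\mu-\nu\|_{TV}=\int|r-1|\,d\nu$. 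Now use $\int r\,d\nu=\mu(\mathbb{X})=1$ together with $m\le r\le M$ $\nu$-a.s.\ (so $m\le 1\le M$): bounding the convex map $t\mapsto|t-1|$ on $[m,M]$ by its chord through the endpoints and taking $\nu$-expectations gives $\|\mu-\nu\|_{TV}\le \tfrac{2(M-1)(1-m)}{M-m}$. The claim then follows from the elementary scalar inequality $\tfrac{2(M-1)(1-m)}{M-m}\le \tfrac{2}{\log 3}\log\tfrac{M}{m}$ for $0<m\le1\le M$; here it is convenient to first dispose of the case $M/m\ge 3$, where the left side is at most $2=\tfrac{2}{\log 3}\log 3$, leaving only the compact range $1\le M/m<3$, a routine one-variable estimate with considerable slack.

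For part (ii): the key structural fact is that a mixing kernel sends any input into a measure comparable to $\lambda$. Taking $\mu,\nu\in\mathcal{P}(\mathbb{X})$ as above, for every Borel $A$ with $\lambda(A)>0$ one has $\varepsilon\lambda(A)\le K\mu(A)\le\varepsilon^{-1}\lambda(A)$ and likewise for $\nu$, while sets with $\lambda(A)=0$ are null for both $K\mu$ and $K\nu$; hence $K\mu,K\nu$ are comparable and the sup/inf defining $h(K\mu,K\nu)$ range over sets with $\lambda(A)>0$. Fix such an $A$ and write $K\mu(A)=\varepsilon\lambda(A)+u_A$, $K\nu(A)=\varepsilon\lambda(A)+v_A$ with $0\le u_A,v_A\le(\varepsilon^{-1}-\varepsilon)\lambda(A)$. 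Then $u_A-v_A=\int\big(K(x,A)-\varepsilon\lambda(A)\big)(\mu-\nu)(dx)$, and since the integrand lies in $[0,(\varepsilon^{-1}-\varepsilon)\lambda(A)]$ (oscillation $\le(\varepsilon^{-1}-\varepsilon)\lambda(A)$) while $\mu-\nu$ has total mass zero, pairing against the oscillation gives $|u_A-v_A|\le\tfrac12(\varepsilon^{-1}-\varepsilon)\lambda(A)\,\|\mu-\nu\|_{TV}$. Using $K\mu(A),K\nu(A)\ge\varepsilon\lambda(A)$ and $\log(1+t)\le t$ then yields, uniformly in $A$, $\big|\log\tfrac{K\mu(A)}{K\nu(A)}\big|\le\tfrac{|u_A-v_A|}{\varepsilon\lambda(A)}\le\tfrac{\varepsilon^{-2}-1}{2}\,\|\mu-\nu\|_{TV}$. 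Finally $h(K\mu,K\nu)=\sup_A\log\tfrac{K\mu(A)}{K\nu(A)}-\inf_A\log\tfrac{K\mu(A)}{K\nu(A)}$, and the two one-sided bounds add up to $h(K\mu,K\nu)\le(\varepsilon^{-2}-1)\|\mu-\nu\|_{TV}\le\tfrac1{\varepsilon^2}\|\mu-\nu\|_{TV}$.

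The step I expect to be the main obstacle is this last one in part (ii): a naive estimate that bounds the numerator $K\mu(A)$ and denominator $K\nu(A)$ separately and then forms the ratio only produces a bound of the form $\log\tfrac{1+s}{1-s}$ with $s\propto\|\mu-\nu\|_{TV}$, which is off by constants and degenerates once $s\ge1$. The correct route is to estimate the log-ratio $\big|\log(K\mu(A)/K\nu(A))\big|$ set by set, so that the Hilbert ``diameter'' of $\{K\mu,K\nu\}$ splits into two one-sided pieces, and to extract the $\|\mu-\nu\|_{TV}$ factor by pairing the mean-zero signed measure $\mu-\nu$ against the \emph{oscillation} (not the supremum) of $x\mapsto K(x,A)$. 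In part (i) the only genuine work is verifying the one-variable inequality with the constant $2/\log 3$, which is comparatively minor.
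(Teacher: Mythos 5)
The paper offers no proof of this lemma---it is quoted directly from Le Gland--Oudjane---so there is nothing internal to compare against; judged on its own, your argument is correct and is essentially the standard one, with one caveat about scope. Your opening reduction in part (i) (``$h$ is scale invariant, so it suffices to treat probability measures'') is not a valid reduction as written, because $\|\mu-\nu\|_{TV}$ is \emph{not} scale invariant; in fact both parts of the lemma are false for arbitrary non-negative finite measures (for (i), take $\mu=2\nu$, so that $h(\mu,\nu)=0$ but $\|\mu-\nu\|_{TV}>0$; for (ii), take two mutually singular point masses scaled by a small $t>0$, which leaves $h(K\mu,K\nu)$ fixed while $\|\mu-\nu\|_{TV}=2t\to0$). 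The statement is only meant for probability measures (equivalently, normalized measures of equal mass), and your proof does cover exactly that case---but you should present the normalization as an \emph{interpretation} of the hypothesis rather than a reduction, and note explicitly that part (ii) uses $\nu(\mathbb{X})=1$ when bounding $K\nu(A)\ge\varepsilon\lambda(A)$ from below and $\mu(\mathbb{X})=\nu(\mathbb{X})$ when subtracting the mid-range constant from $K(\cdot,A)$ against the mean-zero signed measure. Beyond that, both halves check out: in (i), maximizing $\frac{(M-1)(1-m)}{M-m}$ over $m$ at fixed ratio $t=M/m=e^{h}$ gives $\frac{\sqrt{t}-1}{\sqrt{t}+1}=\tanh(h/4)$, so your chord bound actually proves the sharp form $\|\mu-\nu\|_{TV}\le 2\tanh\left(h(\mu,\nu)/4\right)$ (the same $\tanh(\cdot/4)$ appearing in the Birkhoff coefficient of Lemma~\ref{Birkoff}), and the remaining scalar inequality closes in one line from $\tanh(x)\le x$ together with $\log 3<4$---no case split on $M/m\ge3$ is needed; in (ii), the set-by-set estimate of $\bigl|\log\bigl(K\mu(A)/K\nu(A)\bigr)\bigr|$ via the oscillation of $K(\cdot,A)$ is the right device and even yields the slightly better constant $\varepsilon^{-2}-1$ in place of $\varepsilon^{-2}$.
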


\begin{lemma}[\cite{le2004stability}, Lemma 3.8]\label{Birkoff} 
    (Birkhoff contraction coefficient). 
    The nonnegative linear operator $\tau$ on $\mathcal{M}^{+}(\mathbb{X})$ (positive measures on $\mathbb{X}$) 
    associated with a nonnegative kernel $K$ defined on $\mathbb{X}$
    $$
    \tau(K):=\sup _{0<h\left(\mu, \nu\right)<\infty} \frac{h\left(K \mu, K \nu\right)}{h\left(\mu, \nu\right)}=\tanh \left[\frac{1}{4} H(K)\right]
    $$
    where
    $$
    H(K):=\sup _{\mu, \nu} h\left(K \mu, K \nu\right)
    $$
    is over nonnegative measures, is a contraction (called the Birkhoff contraction coefficient) , is a contraction under the Hilbert metric if $H(K)<\infty$ (which implies $\tau(K)<1$).
\end{lemma}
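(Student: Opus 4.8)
The plan is to prove the classical Birkhoff contraction theorem in three stages: first the unconditional non-expansiveness of $K$ in the Hilbert metric, then a reduction of $h(K\mu,K\nu)$ to a two-dimensional problem via a canonical decomposition of $\mu$ and $\nu$, and finally an explicit one-variable optimization that produces the $\tanh$ formula and, as an immediate corollary, the contraction property when $H(K)<\infty$.

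For the first stage I would use the ``upper/lower ratio'' description of the Hilbert metric on the cone $\mathcal{M}^{+}(\mathbb{X})$: for comparable $\mu,\nu$ set $M(\mu/\nu)=\inf\{\lambda>0:\lambda\nu-\mu\in\mathcal{M}^{+}\}$ and $m(\mu/\nu)=\sup\{\lambda>0:\mu-\lambda\nu\in\mathcal{M}^{+}\}$, so that $h(\mu,\nu)=\log\big(M(\mu/\nu)/m(\mu/\nu)\big)$ coincides with the ratio-of-densities definition given above. Since $K$ is linear and maps $\mathcal{M}^{+}$ into itself, it preserves the order relations $m(\mu/\nu)\,\nu\le\mu\le M(\mu/\nu)\,\nu$, whence $m(K\mu/K\nu)\ge m(\mu/\nu)$ and $M(K\mu/K\nu)\le M(\mu/\nu)$, so $h(K\mu,K\nu)\le h(\mu,\nu)$. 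This already gives $\tau(K)\le 1$; moreover when $H(K)=\infty$ the asserted identity is trivial since $\tanh(\infty)=1$, so from now on I assume $H(K)<\infty$, which forces all measures in the range of $K$ to be pairwise comparable.

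For the reduction, fix $\mu,\nu$ with $0<h(\mu,\nu)<\infty$, write $\alpha=m(\mu/\nu)<\beta=M(\mu/\nu)$, and put $p=\mu-\alpha\nu\in\mathcal{M}^{+}$, $q=\beta\nu-\mu\in\mathcal{M}^{+}$. Then $\mu=\tfrac{\beta p+\alpha q}{\beta-\alpha}$ and $\nu=\tfrac{p+q}{\beta-\alpha}$, so by linearity of $K$ and the scale-invariance $h(a\mu,b\nu)=h(\mu,\nu)$, with $a:=Kp$, $b:=Kq$ (both nonzero, else $h(K\mu,K\nu)=0$ and nothing is to prove) and $s:=\beta/\alpha=e^{h(\mu,\nu)}$, one obtains $h(K\mu,K\nu)=h(sa+b,\,a+b)$. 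Because $a,b$ lie in the range of $K$ they are comparable with $\log\big(M(a/b)/m(a/b)\big)=h(a,b)\le H(K)$; writing $t=M(a/b)\ge r=m(a/b)>0$, a short computation of the $\inf$/$\sup$ defining $M$ and $m$ of $\tfrac{sa+b}{a+b}$ — everything takes place inside $\mathrm{span}\{a,b\}$, where membership in $\mathcal{M}^{+}$ is controlled exactly by $t$ and $r$ — gives $M\big(\tfrac{sa+b}{a+b}\big)=\tfrac{1+st}{1+t}$ and $m\big(\tfrac{sa+b}{a+b}\big)=\tfrac{1+sr}{1+r}$, hence $h(K\mu,K\nu)=\log\dfrac{(1+st)(1+r)}{(1+sr)(1+t)}$.

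It remains to optimize this expression. Setting $\psi(\sigma)=\log(1+te^{\sigma})-\log(1+re^{\sigma})$, the right-hand side equals $\psi(\sigma)-\psi(0)$ with $\sigma=\log s>0$; by the mean value theorem it is at most $\sigma\cdot\sup_{\xi\in\mathbb{R}}\psi'(\xi)$, and maximizing $\psi'(\xi)=\tfrac{1}{1+re^{\xi}}-\tfrac{1}{1+te^{\xi}}$ over $\xi$ yields the critical point $e^{\xi}=(tr)^{-1/2}$ with value $\tfrac{\rho-1}{\rho+1}$, where $\rho:=\sqrt{t/r}$. Since $\rho\le e^{H(K)/2}$ and $x\mapsto\tfrac{x-1}{x+1}$ is increasing, and $\tfrac{e^{u}-1}{e^{u}+1}=\tanh(u/2)$, we conclude $h(K\mu,K\nu)\le\tanh\!\big(\tfrac14 H(K)\big)\,h(\mu,\nu)$; as $H(K)<\infty$ this constant is strictly less than $1$, which gives the contraction claim and the bound $\tau(K)\le\tanh(\tfrac14 H(K))$. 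For the reverse inequality, given $\varepsilon>0$ I would pick $\mu_{0},\nu_{0}\in\mathcal{M}^{+}$ with $h(K\mu_{0},K\nu_{0})>H(K)-\varepsilon$ chosen on extreme rays of the cone so that the relevant $m,M$ are attained at the endpoints, then feed $a=K\mu_{0}$, $b=K\nu_{0}$ back through the formula with $s$ chosen near the optimizing value above; the main delicacy here — the place I expect to spend the most effort — is that the projective diameter $H(K)$ need not be attained, so this direction must be done as a careful limiting/approximation argument, and one must verify that the extremal two-dimensional configuration used in the optimization can actually be realized, up to $\varepsilon$, by images of cone elements of the required special form.
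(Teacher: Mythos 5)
The paper does not actually prove this lemma: it is quoted (as Lemma 3.8 of the cited reference) from Le Gland and Oudjane, who in turn import it from the Birkhoff--Hopf theorem, so there is no in-paper argument to compare yours against. Judged on its own terms, your proof of the inequality $h(K\mu,K\nu)\le \tanh\bigl(\tfrac14 H(K)\bigr)\,h(\mu,\nu)$ is the standard Birkhoff argument and is correct: the order-preservation step giving non-expansiveness, the decomposition $\mu=\tfrac{\beta p+\alpha q}{\beta-\alpha}$, $\nu=\tfrac{p+q}{\beta-\alpha}$ with $p=\mu-\alpha\nu$, $q=\beta\nu-\mu$ (both genuinely in $\mathcal{M}^{+}$ because the sup/inf defining $m,M$ are attained), the two-parameter formula $h(K\mu,K\nu)=\log\tfrac{(1+st)(1+r)}{(1+sr)(1+t)}$, and the one-variable optimization giving $\sup_{\xi}\psi'(\xi)=\tfrac{\rho-1}{\rho+1}=\tanh\bigl(\tfrac14 h(Kp,Kq)\bigr)\le\tanh\bigl(\tfrac14 H(K)\bigr)$ all check out. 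This inequality is the only part of the lemma used downstream (Lemma \ref{clm} and Theorem \ref{d} need a contraction constant strictly less than $1$, not the exact value of $\tau(K)$), so for the purposes of this article your argument suffices.

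The one genuine gap is the reverse inequality $\tau(K)\ge\tanh\bigl(\tfrac14 H(K)\bigr)$, needed for the stated equality, which you only sketch and correctly flag as delicate. It can be closed along the lines you indicate: since $M\bigl((a_1+a_2)/b\bigr)\le\max\bigl(M(a_1/b),M(a_2/b)\bigr)$ and $m\bigl((a_1+a_2)/b\bigr)\ge\min\bigl(m(a_1/b),m(a_2/b)\bigr)$, the projective diameter $H(K)$ equals $\sup_{x\neq y}h(K\delta_x,K\delta_y)$, so near-diametral pairs may be taken to be mutually singular point masses; then with $\mu=sc\,\delta_x+\delta_y$, $\nu=c\,\delta_x+\delta_y$ and $c=(tr)^{-1/2}$ centring the configuration at your critical point, one has $h(\mu,\nu)=\log s$ exactly and the ratio tends to $\tanh\bigl(\tfrac14 h(K\delta_x,K\delta_y)\bigr)$ as $s\downarrow 1$. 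Without some such reduction to mutually singular (or extremal) inputs, the quantity $h(\mu_0,\nu_0)$ in your lower-bound construction is computed in the full cone and need not match the two-dimensional value your formula presumes, so the sketch as written does not yet yield equality.
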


Another filter stability result which will also be useful in numerical methods for POMDPs to be considered later is via the following {\it stochastic non-linear observability} definition. 


\begin{definition}\label{one_step_observability}[Stochastic Observability for Non-Linear Systems]\label{nonLObs}\cite{MYRobustControlledFS}
A POMDP is called one step observable (universal in admissible control policies) if for every $f \in C_{b}(\mathbb{X})$ and every $\epsilon>0$ there exists a measurable and bounded function $g$ such that $\|f(\cdot)-\int_{\mathbb{Y}}g(y)Q(dy|\cdot)\|_{\infty}<\epsilon$.
\end{definition}

\begin{theorem}\label{weak_merging_pred}\cite{MYRobustControlledFS}
Assume that $\mu \ll \nu$ and that the POMDP is one step observable. Then the predictor is universally stable weakly a.s. .
\end{theorem}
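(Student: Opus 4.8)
The plan is to reduce weak merging of the predictors $\pi_{n-}^{\mu,\gamma}$ and $\pi_{n-}^{\nu,\gamma}$ to merging of the one-step predictive laws of the measurement $Y_n$, exploiting that the two strategic measures differ only through their initializations. Fix an admissible $\gamma\in\Gamma$ and a prior $\nu$ with $\mu\ll\nu$, set $\mathcal{G}_{n}=\sigma(Y_{[0,n]},U_{[0,n-1]})$, and for bounded measurable $g:\mathbb{Y}\to\mathbb{R}$ write $(Qg)(x)=\int_{\mathbb{Y}}g(y)Q(dy|x)$. Since $Y_n$ depends on $X_n$ only through the independent measurement noise, one has $\int (Qg)\,d\pi_{n-}^{\mu,\gamma}=E^{\mu,\gamma}[g(Y_n)\mid\mathcal{G}_{n-1}]$, and likewise $\int (Qg)\,d\pi_{n-}^{\nu,\gamma}=E^{\nu,\gamma}[g(Y_n)\mid\mathcal{G}_{n-1}]$ $P^{\nu,\gamma}$-a.s., hence also $P^{\mu,\gamma}$-a.s.\ because $\mu\ll\nu$ forces $P^{\mu,\gamma}\ll P^{\nu,\gamma}$ on $\bigvee_n\mathcal{G}_n$. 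Thus the core task is to show the two predictive means of $g(Y_n)$ merge, $P^{\mu,\gamma}$-a.s.; one-step observability will then upgrade this from bounded functions of $Y_n$ to arbitrary $f\in C_b(\mathbb{X})$.

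For the core step I would introduce the likelihood-ratio martingale $L_n=\frac{dP^{\mu,\gamma}}{dP^{\nu,\gamma}}\big|_{\mathcal{G}_n}=E^{\nu,\gamma}[\tfrac{d\mu}{d\nu}(X_0)\mid\mathcal{G}_n]$, a nonnegative uniformly integrable $P^{\nu,\gamma}$-martingale with $L_n\to L_\infty$ a.s.\ and $L_\infty>0$ $P^{\mu,\gamma}$-a.s., together with the increment ratios $\ell_n=L_n/L_{n-1}$, for which $E^{\nu,\gamma}[\ell_n\mid\mathcal{G}_{n-1}]=1$. The Bayes rule for conditional expectations then gives
\begin{align*}
\int (Qg)\,d\pi_{n-}^{\mu,\gamma}-\int (Qg)\,d\pi_{n-}^{\nu,\gamma}=E^{\nu,\gamma}\big[(\ell_n-1)g(Y_n)\mid\mathcal{G}_{n-1}\big],
\end{align*}
whose modulus is at most $\|g\|_\infty\,E^{\nu,\gamma}[\,|\ell_n-1|\mid\mathcal{G}_{n-1}]$. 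It remains to drive these conditional increment norms to $0$ $P^{\mu,\gamma}$-a.s. Here I would pass to $\sqrt{L_n}$, a nonnegative uniformly integrable $P^{\nu,\gamma}$-supermartingale, whose Doob compensator $\sum_k\sqrt{L_{k-1}}\,(1-E^{\nu,\gamma}[\sqrt{\ell_k}\mid\mathcal{G}_{k-1}])$ is integrable and hence a.s.\ finite; on $\{L_\infty>0\}$ the prefactors $\sqrt{L_{k-1}}$ are bounded away from $0$, so $1-E^{\nu,\gamma}[\sqrt{\ell_n}\mid\mathcal{G}_{n-1}]\to0$ $P^{\mu,\gamma}$-a.s., and a conditional Cauchy--Schwarz estimate bounds $E^{\nu,\gamma}[\,|\ell_n-1|\mid\mathcal{G}_{n-1}]$ by a fixed multiple of its square root. (Conceptually this is the Blackwell--Dubins merging-of-opinions phenomenon --- the $\mathcal{G}_{n}$-conditional laws of $(Y_{n+1},Y_{n+2},\dots)$ under $P^{\mu,\gamma}$ and $P^{\nu,\gamma}$ merge in total variation $P^{\mu,\gamma}$-a.s.\ --- and one may invoke that result directly in place of the supermartingale computation.)

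To conclude, fix $f\in C_b(\mathbb{X})$; by one-step observability choose, for each $m\in\mathbb{N}$, a bounded measurable $g_m$ with $\|f-Qg_m\|_\infty<1/m$, so that
\begin{align*}
\Big|\int f\,d\pi_{n-}^{\mu,\gamma}-\int f\,d\pi_{n-}^{\nu,\gamma}\Big|\le \tfrac{2}{m}+\Big|\int (Qg_m)\,d\pi_{n-}^{\mu,\gamma}-\int (Qg_m)\,d\pi_{n-}^{\nu,\gamma}\Big|.
\end{align*}
Letting $n\to\infty$ on the $P^{\mu,\gamma}$-full set produced by the core step --- which depends only on $(\mu,\nu,\gamma)$ and not on $f$ or $g$ --- the last term vanishes, so $\limsup_n|\int f\,d\pi_{n-}^{\mu,\gamma}-\int f\,d\pi_{n-}^{\nu,\gamma}|\le 2/m$ for every $m$, and the limit is $0$; since this null set is independent of $f$, the convergence is simultaneous over all $f\in C_b(\mathbb{X})$, i.e.\ weak merging of the predictors $P^{\mu,\gamma}$-a.s. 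Nothing beyond admissibility of $\gamma$ was used, so the statement is universal over $\gamma$. The main obstacle is exactly the core step: promoting the $L^1(P^{\nu,\gamma})$-convergence of the martingale increments to an almost-sure bound on their $\mathcal{G}_{n-1}$-conditional norms evaluated along $P^{\mu,\gamma}$-paths --- the Kakutani/Hellinger (equivalently Blackwell--Dubins) input is what makes this go through --- together with the bookkeeping needed to keep the exceptional null set free of $f$ and $g$ (and, for literal universality as in the definition, of $\nu$).
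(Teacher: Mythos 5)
Your proposal is correct and follows essentially the same route as the proof in the cited reference \cite{MYRobustControlledFS} (the present paper states the theorem without reproducing a proof): the likelihood-ratio martingale / Blackwell--Dubins merging argument yields almost-sure total-variation merging of the one-step predictive laws of $Y_n$ with an exceptional set independent of the test function, and one-step observability transfers this to arbitrary $f\in C_b(\mathbb{X})$ via the $\|f-Qg\|_\infty<\epsilon$ approximation. The one delicate point you rightly flag --- uniformity of the null set over all priors $\nu$ with $\mu\ll\nu$, which Definition \ref{filterStabilityAsymptotic}(i) literally demands --- is handled in the reference by arguing for a fixed pair $(\mu,\nu)$, so it does not constitute a gap relative to the source.
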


The observability notion defined above only results in stability of the predictor in the weak sense $P^{\mu,\gamma}$ almost surely. However, these can be extended to filter stability and under further criteria, see \cite{MYRobustControlledFS,mcdonald2018stability}. 

We now present an example for observability. 
\begin{example}\cite{mcdonald2018stability}\label{exampleCurtisFinite} 
Consider a finite setup $\mathbb{X}=\{a_{1},\cdots, a_{n}\}$ and let the noise space be $\mathbb{V}=\{b_{1},\cdots,b_{m}\}$. Now, assume $y=h(x,v)$ has $K$ distinct outputs, where $1\leq K \leq (n)(m)$ and $\mathbb{Y}=\{c_{1},\cdots,c_{K}\}$. We note that for such a setup, there is already a sufficient and necessary condition for filter stability provided in \cite[Theorem V.2]{van2010nonlinear} (see also \cite{van2009observability}). For each $x$, $h_{x}$ can be viewed as a partition of $\mathbb{V}$, assigning each $b_{i} \in \mathbb{X}$ to an output level $c_{j}\in \mathbb{Y}$. We can track this by the matrix $H_{x}(i,j)=1$ if $h_{x}(b_{i})=y_{j}$ and zero else. Let $Q$ be the $1 \times m$ vector representing the probability measure of the noise. Let $g(c_{i})=\alpha_{i}$, with $\alpha^\intercal =\left[\alpha_1 ,\alpha_2, \dots , \alpha_K \right]$ and $\int_{\mathbb{V}} g(h(x,v))Q(dv) =: (QH_{x}) \alpha
$. Any function $f(x)$ can be expressed as a $n \times 1$ vector and hence the question reduces to finding a vector $\alpha$ so that $f =  QH \alpha$, and the system is one step observable if and only if the matrix
$
A\equiv\begin{bmatrix}
QH_{a_{1}}\\
\vdots\\
QH_{a_{n}}
\end{bmatrix}
$
is rank $n$. 
%
\end{example}

Further examples for measurement channels satisfying Definition \ref{one_step_observability} have been reported in \cite[Section 3]{mcdonald2018stability}.

Applications of these will be discussed in the context of numerical methods for POMDPs. Filter stability is also related to robustness of optimal costs to incorrect initializations for controlled models \cite{MYRobustControlledFS}.

\section{Existence of Optimal Policies: Discounted Cost and Average Cost}

\subsection{Discounted Cost}

\begin{theorem}
If the cost function $c: \mathbb{X} \times \mathbb{U} \to \mathbb{R}$ is continuous and bounded, and $\mathbb{U}$ is compact, under Theorems \ref{TV_channel_thm} or \ref{TV_kernel_thm}, for any $\beta \in (0,1)$, there exists an optimal solution to the discounted cost optimality problem with a continuous and bounded value function. Furthermore, under Assumption \ref{main_assumption}, with $K_2=\frac{\alpha D (3-2\delta(Q))}{2}$, if $\beta K_2 < 1$ the value function is Lipschitz continuous.
\end{theorem}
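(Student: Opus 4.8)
\noindent The plan is to transfer the problem to the belief-MDP $(\mathcal{P}(\mathbb{X}),\mathbb{U},\eta,\tilde c)$ and then invoke the classical contraction/measurable-selection machinery for fully observed discounted MDPs; by the reduction recalled earlier in the excerpt, the POMDP discounted value coincides with that of this belief-MDP. Under Theorem~\ref{TV_channel_thm} or Theorem~\ref{TV_kernel_thm} the kernel $\eta$ is weak Feller, $\tilde c$ is continuous and bounded (as already observed via the generalized dominated convergence theorem), and $\mathbb{U}$ is compact. I would then study the discounted Bellman operator
\[
(Tv)(\pi)=\min_{u\in\mathbb{U}}\Big(\tilde c(\pi,u)+\beta\int v(\pi')\,\eta(d\pi'|\pi,u)\Big),
\]
and show $T:C_b(\mathcal{P}(\mathbb{X}))\to C_b(\mathcal{P}(\mathbb{X}))$: for fixed $v\in C_b$, weak Feller continuity makes $(\pi,u)\mapsto\int v\,d\eta(\cdot|\pi,u)$ continuous and bounded, hence so is the bracketed map on $\mathcal{P}(\mathbb{X})\times\mathbb{U}$, and the Berge maximum theorem gives continuity of the minimum over the compact set $\mathbb{U}$ together with a measurable minimizing selector. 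Since $T$ is a $\beta$-contraction in the supremum norm, Banach's fixed point theorem produces a unique $J^*_\beta\in C_b(\mathcal{P}(\mathbb{X}))$ with $TJ^*_\beta=J^*_\beta$; the standard verification argument identifies $J^*_\beta$ with the optimal discounted value and the stationary policy induced by the selector as optimal. This gives the first assertion.

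For the Lipschitz refinement, add Assumption~\ref{main_assumption}. Since $\mathbb{X}$ is compact, $W_1$ metrizes the weak topology on $\mathcal{P}(\mathbb{X})$, so $C_b(\mathcal{P}(\mathbb{X}))$ coincides with the $W_1$-continuous bounded functions and Lipschitz constants may be measured with respect to $W_1$. Two facts are needed. First, Assumption~\ref{main_assumption}(\ref{CostLipschitz}) and Kantorovich--Rubinstein duality give
\[
|\tilde c(\pi,u)-\tilde c(\pi',u)|=\Big|\int c(x,u)\,(\pi-\pi')(dx)\Big|\le K_1\,W_1(\pi,\pi'),
\]
uniformly in $u$. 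Second, Theorem~\ref{ergodicity} gives $W_1(\eta(\cdot|z_0,u),\eta(\cdot|z_0',u))\le K_2\,W_1(z_0,z_0')$ with $K_2=\frac{\alpha D(3-2\delta(Q))}{2}$. Hence if $v$ is $L$-Lipschitz in $W_1$, then for each $u$,
\[
\Big|\int v\,d\eta(\cdot|\pi,u)-\int v\,d\eta(\cdot|\pi',u)\Big|\le L\,W_1\big(\eta(\cdot|\pi,u),\eta(\cdot|\pi',u)\big)\le LK_2\,W_1(\pi,\pi'),
\]
and since $|\min_u f(\pi,u)-\min_u f(\pi',u)|\le\sup_u|f(\pi,u)-f(\pi',u)|$, the function $Tv$ is $(K_1+\beta K_2 L)$-Lipschitz.

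To conclude, I would run value iteration from $v_0\equiv 0$: the iterates $v_n=T^nv_0$ converge uniformly to $J^*_\beta$ by the contraction property, while by the previous step $v_n$ is $L_n$-Lipschitz with $L_0=0$ and $L_{n+1}=K_1+\beta K_2 L_n$. When $\beta K_2<1$ this recursion is bounded and increases to $L^\star=\frac{K_1}{1-\beta K_2}$; as a uniform limit of uniformly $L^\star$-Lipschitz functions is $L^\star$-Lipschitz, $J^*_\beta$ is Lipschitz with constant $\frac{K_1}{1-\beta K_2}$.

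The main obstacle is really in the first part: verifying $T:C_b\to C_b$, i.e., the joint continuity of $(\pi,u)\mapsto\int v\,d\eta(\cdot|\pi,u)$ and the appeal to a measurable selection theorem so that the optimal policy is admissible. This is classical once the weak Feller property (the genuine content, supplied by Theorems~\ref{TV_channel_thm}--\ref{TV_kernel_thm}) is available. A minor point to check in the Lipschitz part is that the hypotheses of Theorem~\ref{ergodicity} (its required items of Assumption~\ref{main_assumption} and Polishness of $\mathbb{X},\mathbb{Y}$) are covered by the standing assumptions.
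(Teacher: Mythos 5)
Your proposal is correct and follows essentially the same route as the paper: the paper's proof simply cites the measurable selection/contraction machinery for weak Feller MDPs (Hern\'andez-Lerma et al.) for existence and \cite[Theorem 4.37]{SaLiYuSpringer} for the Lipschitz part, and your explicit Berge-selection/Banach fixed-point argument together with the value-iteration propagation of the Lipschitz constant $L_{n+1}=K_1+\beta K_2 L_n$ is exactly the content of those references.
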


\begin{proof} An application of the dominated convergence theorem implies that $\tilde{c}(\pi,u)$ (\ref{weak:eq8}) is also continuous and bounded. If the action set is compact, then under Theorems \ref{TV_channel_thm} or \ref{TV_kernel_thm}, which imply that $\eta$ is weakly continuous, we have that the measurable selection conditions (see e.g. \cite{HernandezLermaMCP}) apply, and solutions to the Bellman or discounted cost optimality equations exist, and accordingly an optimal control policy exists. For the second result, \cite[Theorem 4.37]{SaLiYuSpringer} leads to Lipschitz regularity under the Wasserstein continuity condition on the kernel. 
\end{proof}

\subsection{Average Cost}

The average cost is a significantly more challenging problem as the typical contraction conditions via minorization is too demanding for $\eta$. An alternative approach is based on the Section \ref{wass_cont}.
The average cost optimality equation (ACOE) plays a crucial role for the analysis and the existence results of MDPs under the infinite horizon average cost optimality criteria. The triplet $(h,\rho^*,\gamma^*)$, where $h,\gamma:\cal{P}(\mathbb{X})\to \mathds{R}$ are measurable functions and $\rho*\in\mathds{R}$ is a constant, forms the ACOE if 
\begin{align}\label{acoe}
h(z)+\rho^*&=\inf_{u\in\mathds{U}}\left\{\tilde{c}(z,u) + \int h(z_1)\eta(dz_1|z,u)\right\}\nonumber\\
&=\tilde{c}(z,\gamma^*(z)) + \int h(z_1)\eta(dz_1|z,\gamma^*(z))
\end{align}
for all $z\in\cal{P}(\mathbb{X})$. It is well known that (see e.g. \cite[Theorem 5.2.4]{HernandezLermaMCP}) if (\ref{acoe}) is satisfied with the triplet $(h,\rho^*,\gamma^*)$, and furthermore if $h$ satisfies
\begin{align*}
\sup_{\gamma\in\Gamma}\lim_{t \to \infty}\frac{E_z^\gamma[h(Z_t)] }{t}=0, \quad \forall z\in\cal{P}(\mathbb{X})
\end{align*}
then $\gamma^*$ is an optimal policy for the POMDP under the infinite horizon average cost optimality criteria, and 
\begin{align*}
J^*(z)=\inf_{\gamma\in\Gamma}J(z,\gamma)=\rho^* \quad \forall z\in \cal{P}(\mathbb{X}).
\end{align*}
\begin{theorem}\label{mainEmre}
\begin{itemize}
\item[(i)]  \cite{demirci2023average} Under Assumption \ref{main_assumption}, with $K_2=\frac{\alpha D (3-2\delta(Q))}{2} < 1$, 
    a solution to the average cost optimality 
    equation (ACOE) exists. 
    This leads to the existence of an optimal 
    control policy, and optimal cost is constant for 
    every initial state.
  \item[(ii)]  \cite[Theorem 3]{anotherLookPOMDPs} If the cost function $c: \mathbb{X} \times \mathbb{U} \to \mathbb{R}$ is continuous and bounded, and $\mathbb{U}$ is compact, under weak Feller regularity of $\eta$ (e.g., under either Theorem \ref{TV_channel_thm} or \ref{TV_kernel_thm}), there exists an optimal policy \footnote{Here, the optimality result may only hold for a restrictive class of initial conditions or initializations, unlike part (i).}  
  \end{itemize}
\end{theorem}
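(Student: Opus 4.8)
\textbf{Part (i).} The plan is to run the vanishing-discount argument, with the Wasserstein contraction of Theorem~\ref{ergodicity} supplying the compactness/equicontinuity that is usually the bottleneck. First I would note that under Assumption~\ref{main_assumption} the belief-MDP $(\mathcal{P}(\mathbb{X}),\mathbb{U},\eta,\tilde c)$ is weakly continuous with continuous bounded $\tilde c$ and, by Theorem~\ref{ergodicity}, its kernel is $W_1$-Lipschitz in the belief with modulus $K_2<1$; in particular (by the measurable selection conditions, with $\mathbb{U}$ compact, exactly as in the discounted-cost result above) the discounted optimality equation $V_\beta(z)=\inf_u\{\tilde c(z,u)+\beta\int V_\beta(z_1)\eta(dz_1|z,u)\}$ holds with $V_\beta\in C_b(\mathcal{P}(\mathbb{X}))$ and an optimal stationary selector. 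The dual form of $W_1$ together with Assumption~\ref{main_assumption}-\ref{CostLipschitz} gives $|\tilde c(z,u)-\tilde c(z',u)|\le K_1 W_1(z,z')$, and combining this with $W_1(\eta(\cdot|z,u),\eta(\cdot|z',u))\le K_2 W_1(z,z')$ and propagating the Lipschitz constant through value iteration yields $\mathrm{Lip}(V_\beta)\le K_1/(1-\beta K_2)\le K_1/(1-K_2)$, a bound uniform in $\beta\in(0,1)$ precisely because $K_2<1$. Fixing a reference point $z_0$ and putting $h_\beta:=V_\beta-V_\beta(z_0)$, compactness of $\mathbb{X}$ makes $\mathcal{P}(\mathbb{X})$ compact in $W_1$ with diameter at most $D$, so $\{h_\beta\}$ is uniformly bounded (by $K_1 D/(1-K_2)$) and equi-Lipschitz; I would then apply Arzel\`a--Ascoli to extract $\beta_n\uparrow 1$ along which $h_{\beta_n}\to h$ uniformly and $(1-\beta_n)V_{\beta_n}(z_0)\to\rho^*\in[0,\|c\|_\infty]$, noting that $(1-\beta)\|h_\beta\|_\infty\to 0$ forces $(1-\beta)V_\beta(\cdot)\to\rho^*$ uniformly. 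Passing to the limit in the equivalent form $(1-\beta)V_\beta(z_0)+h_\beta(z)=\inf_u\{\tilde c(z,u)+\beta\int h_\beta(z_1)\eta(dz_1|z,u)\}$ — where uniform convergence on the compact $\mathcal{P}(\mathbb{X})$, weak continuity of $\eta$, continuity of $\tilde c$ and compactness of $\mathbb{U}$ let the limit pass through the infimum and produce a measurable minimizer $\gamma^*$ — yields the ACOE \[ h(z)+\rho^*=\inf_u\{\tilde c(z,u)+\int h(z_1)\eta(dz_1|z,u)\}=\tilde c(z,\gamma^*(z))+\int h(z_1)\eta(dz_1|z,\gamma^*(z)). \] Since $h$ is bounded on $\mathcal{P}(\mathbb{X})$, the condition $\sup_\gamma\lim_t E^\gamma_z[h(Z_t)]/t=0$ holds trivially, so \cite[Theorem~5.2.4]{HernandezLermaMCP} gives that $\gamma^*$ is average-cost optimal with $J^*(z)=\rho^*$ for every $z$.

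\textbf{Part (ii).} Now only weak Feller continuity of $\eta$ is available and $\mathcal{P}(\mathbb{X})$ need not be compact, so the equi-Lipschitz/Arzel\`a--Ascoli step of Part~(i) is gone and one can at best target the average cost optimality \emph{inequality}. The plan is a Sch\"al-type vanishing-discount argument: with $m_\beta:=\inf_w V_\beta(w)$ (so $(1-\beta)m_\beta$ is automatically bounded), set $h_\beta:=V_\beta-m_\beta\ge 0$ and $h:=\liminf_{\beta \to 1}h_\beta$, then take a $\liminf$ in the discounted equation — using weak continuity of $\eta$, continuity/boundedness of $\tilde c$ and compactness of $\mathbb{U}$ for lower semicontinuity and measurable selection — to obtain $h(z)+\rho^*\ge\inf_u\{\tilde c(z,u)+\int h(z_1)\eta(dz_1|z,u)\}$ with an attaining selector $\gamma^*$ that is average-cost optimal. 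An equivalent route is the convex-analytic one: minimize $\int\tilde c\,dv$ over the closed convex set of invariant occupation measures $v\in\mathcal{P}(\mathcal{P}(\mathbb{X})\times\mathbb{U})$, lower semicontinuity of the objective coming from weak Feller-ness and continuity/boundedness of $\tilde c$, attainment from compactness of $\mathbb{U}$ and tightness of a Ces\`aro-averaged minimizing sequence, and the optimal stationary selector from disintegrating the minimizer. In both routes, however, success hinges on an additional input — a Sch\"al-type uniform bound on the $h_\beta$, or the tightness that yields an invariant occupation measure — that is not free on a non-compact state space; this is precisely why optimality can be asserted only for a restricted class of initial conditions (for instance, those in the support of the constructed invariant measure), as the footnote records.

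\textbf{Main obstacle.} In Part~(i) the single delicate point is the interchange of limit and infimum in the optimality equation, and the uniform Lipschitz bound inherited from Theorem~\ref{ergodicity} makes this routine on the compact space $\mathcal{P}(\mathbb{X})$. The genuine difficulty is entirely in Part~(ii): from weak Feller-ness alone on a non-compact state space one must still manufacture either uniform control of the relative discounted value functions or the tightness of the averaged occupation measures needed to pass to an invariant measure, and it is exactly this gap — absent the Wasserstein contraction — that forces both the weaker (ACOI rather than ACOE) conclusion and the restriction to a limited class of initializations.
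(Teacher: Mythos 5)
Your proposal is correct and follows essentially the same route as the paper, whose proof is only a two-line pointer: part (i) by the vanishing-discount method of \cite{demirci2023average} (your uniform Lipschitz bound $K_1/(1-\beta K_2)$ from Theorem~\ref{ergodicity}, Arzel\`a--Ascoli on the $W_1$-compact $\mathcal{P}(\mathbb{X})$, and passage to the ACOE is exactly the intended argument), and part (ii) by the convex-analytic method building on \cite{Borkar2}, which you include as your ``equivalent route'' and for which you correctly identify the tightness/initialization issue recorded in the footnote. The only cosmetic difference is that you lead part (ii) with a Sch\"al-type ACOI argument rather than the occupation-measure argument the paper actually cites, but both are standard and you present the latter as well.
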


\begin{proof} (i) follows from a vanishing discount method \cite{demirci2023average}. (ii) follows from the convex analytic method building on \cite{Borkar2}. 
\end{proof}


\section{Approximations: Discounted Cost}

\subsection{State and Action Space Quantization}\label{finite_belief}


By combining the approximation results in \cite{SYLTAC2017POMDP,SaLiYuSpringer}, together with the weak Feller continuity results presented earlier, we can conclude that the numerical methods for weakly continuous fully observed MDPs can also be applied to POMDPs under the conditions reported in Theorems \ref{TV_channel_thm} and \ref{TV_kernel_thm}. This has explicitly been demonstrated in  \cite{SYLTAC2017POMDP}, where also methods for quantizing probability measures have been studied in \cite[Section 5]{SYLTAC2017POMDP}. Notably, one can first quantize the action space with arbitrarily small loss (see \cite{saldi2014near}\cite[Theorem 3.16]{SaLiYuSpringer} for discounted cost and \cite{saldi2014near},\cite[Theorem 3.22]{SaLiYuSpringer} for average cost) and then approximate the probability measures, e.g. under the $W_1$ metric, to obtain a finite model. In the following, we follow the approach and results from  \cite{KSYContQLearning} applied to belief-MDPs.


To construct a finite near-optimal MDP model, we begin by 
quantizing the belief states. We select disjoint 
sets $\left\{Z_i\right\}_{i=1}^M$ such that 
$\bigcup_i Z_i={\cal P}(\mathbb{X})$, and each 
$Z_i$ is disjoint from $Z_j$ 
for any $i \neq j$.
For each set, we choose a representative state,
denoted as $z_i \in Z_i$.
This results in a finite state space for our 
model, represented by
$\bar{Z}:=\left\{z_1, \ldots, z_M\right\}$.
The quantization function maps the original state space
${\cal P}(\mathbb{X})$ to this finite set $\bar{Z}$ as follows:
$$
q(z)=z_i \quad \text { if } z \in Z_i .
$$
To define the approximate cost function, we select a 
weight measure $\pi^* \in {\cal P}({\cal P}(\mathbb{X}))$ 
over ${\cal P}(\mathbb{X})$ such that 
$\pi^*\left(Z_i\right)>0$ for all 
$Z_i$.
Under Assumption \ref{main_assumption},
we know that ${\cal P}(\mathbb{X})$ is compact under $W_1$ metric.
We then define normalized measures 
for each quantization bin $Z_i$
using the weight measure as:
$$
\hat{\pi}_{z_i}^*(A):=\frac{\pi^*(A)}{\pi^*\left(Z_i\right)}, 
\quad \forall A \subset Z_i, \quad \forall i \in\{1, \ldots, M\}.
$$
This normalized measure, $\hat{\pi}_{z_i}^*$,
is specific to the set 
$Z_i$ containing $z_i$.

Next, we define the stage-wise cost and the 
transition kernel for the MDP with the 
finite state space $\bar{Z}$ 
using these normalized weight measures.
For any $z_i, z_j \in \bar{Z}$ and 
$u \in \mathbb{U}$, the stage-wise cost function
and the transition kernel are:
$$
\begin{aligned}
c^*\left(z_i, u\right) & 
=\int_{Z_i} \tilde{c}(z, u) \hat{\pi}_{z_i}^*(d z), \\
\eta^*\left(z_j \mid z_i, u\right) & 
=\int_{Z_i} \eta\left(Z_j \mid z, u\right) 
\hat{\pi}_{z_i}^*(d z) .
\end{aligned}
$$

After establishing the finite state space 
$\bar{Z}$, the cost function $c^*$ and the transition kernel 
$\eta^*$, we introduce the discounted optimal 
value function for this finite model, denoted as 
$\hat{J}_\beta: \bar{Z} \rightarrow \mathbb{R}$.
We extend this function to the entire original state space 
${\cal P}(\mathbb{X})$  by keeping it constant within the 
quantization bins. Therefore, for any 
$z \in Z_i$, we define:
$$
\hat{J}_\beta(z):=\hat{J}_\beta(z_i) .
$$
We also define the maximum loss function among 
the quantization bins as:
\begin{align}\label{L_max}
\bar{L}:=\max _{i=1, \ldots, M} \sup _{z, z^{\prime} \in Z_i}W_1(z,z^{\prime}).
\end{align}
\begin{assumption} \label{quantized_as} [\cite{KSYContQLearning} Assumption 4]
 \noindent 
    \begin{enumerate}
        \item ${\cal P}(\mathbb{X})$ is compact (under $W_1$ metric).
        \item There exists $\alpha_c>0$ such that 
        $\left|\tilde{c}(z, u)-\tilde{c}\left(z^{\prime},
         u\right)\right| \leq \alpha_c d(z,z^{\prime})$ 
         for all $z, z^{\prime} \in {\cal P}(\mathbb{X})$ and for all $u \in \mathbb{U}$. It suffices that $\left|c(x,u)-c(x',u)\right|\leq \alpha_c|x-x'|$.
        \item There exists $\alpha_\eta>0$ such that 
        $W_1\left(\eta(\cdot \mid z, u), 
        \eta\left(\cdot \mid z^{\prime}, 
        u\right)\right) \leq \alpha_\eta d(z,z^{\prime})$ 
        for all $z, z^{\prime} \in {\cal P}(\mathbb{X})$ 
        and for all $u \in \mathbb{U}$.
    \end{enumerate}

\end{assumption}


\begin{theorem}\label{theoremBoundAvg} \cite[Theorem 6]{KSYContQLearning}
Under Assumption \ref{quantized_as}, we have
$$
\sup _{z \in {\cal P}(\mathbb{X})}\left|J_\beta\left(z, 
\hat{\gamma}\right)-J_\beta^*\left(z\right)\right| 
\leq \frac{2 \alpha_c}{(1-\beta)^2\left(1-\beta 
\alpha_\eta\right)} \bar{L} ,
$$
where $\bar{L}$ is defined in (\ref{L_max}) and 
$\hat{\gamma}$ denotes the optimal policy of 
the finite-state approximate model extended to 
the state space ${\cal P}(\mathbb{X})$ via the quantization function $q$.
\end{theorem}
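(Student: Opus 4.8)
The plan is to read Theorem~\ref{theoremBoundAvg} as a finite-model approximation bound for the fully observed belief-MDP $(\mathcal{P}(\mathbb{X}),\mathbb{U},\eta,\tilde c)$ and to split the estimate into two stages: first control the value-function mismatch $\|\hat J_\beta-J_\beta^*\|_\infty$, where $J_\beta^*$ is the optimal value of the belief-MDP, and then convert this into a performance loss for the extended policy $\hat\gamma$. The key preliminary is a Lipschitz modulus for $J_\beta^*$. Writing $T$ for the belief-MDP Bellman operator $(TJ)(z)=\min_u\{\tilde c(z,u)+\beta\int J(z_1)\eta(dz_1\mid z,u)\}$, parts (2)--(3) of Assumption~\ref{quantized_as}, together with the dual representation of $W_1$, give $\mathrm{Lip}(TJ)\le\alpha_c+\beta\alpha_\eta\,\mathrm{Lip}(J)$; since $\beta\alpha_\eta<1$, iterating $T$ from $J\equiv 0$ (the fixed point existing and being attained by compactness of $\mathcal{P}(\mathbb{X})$) shows that $J_\beta^*$ is Lipschitz with modulus $K:=\alpha_c/(1-\beta\alpha_\eta)$. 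This is the same Wasserstein regularity already invoked for the discounted-cost value function above.

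Next I would exploit the algebraic identity that makes the finite model tractable: since $\hat J_\beta$ is extended to be constant on each $Z_i$ and $\eta^*$ transports mass to representatives, for $z\in Z_i$ one has $\sum_j\eta^*(z_j\mid z_i,u)\hat J_\beta(z_j)=\int_{Z_i}\big(\int\hat J_\beta\,d\eta(\cdot\mid z',u)\big)\hat\pi^*_{z_i}(dz')$, so that $\hat J_\beta(z)=\hat J_\beta(z_i)=\min_u\int_{Z_i}\big(\tilde c(z',u)+\beta\int\hat J_\beta\,d\eta(\cdot\mid z',u)\big)\hat\pi^*_{z_i}(dz')$. To bound $\varepsilon:=\|\hat J_\beta-J_\beta^*\|_\infty$ I would compare value iterates $J^{(n)}$, $\hat J^{(n)}$ of the belief-MDP and of the finite model (the latter extended piecewise constant), both from $0$; each $J^{(n)}$ is $K$-Lipschitz. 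In the identity above for $\hat J^{(n+1)}(z)$, write $\hat J^{(n)}=J^{(n)}+(\hat J^{(n)}-J^{(n)})$: the correction term contributes at most $\beta\varepsilon_n$, $\varepsilon_n:=\|\hat J^{(n)}-J^{(n)}\|_\infty$, under the single kernel integral, while comparing the two minima and moving among $z$, its representative $z_i$, and points $z'\in Z_i$ costs at most $\bar L$ times $\alpha_c$, $\beta K\alpha_\eta$, and $K$ respectively (using that $J^{(n)}$ is Lipschitz and that $\eta$ is $W_1$-continuous). This gives $\varepsilon_{n+1}\le\beta\varepsilon_n+(\alpha_c+\beta K\alpha_\eta+K)\bar L$; letting $n\to\infty$ and substituting $K$, the bracket collapses to $2\alpha_c/(1-\beta\alpha_\eta)$, so $\|\hat J_\beta-J_\beta^*\|_\infty\le\frac{2\alpha_c}{(1-\beta)(1-\beta\alpha_\eta)}\bar L$.

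For the performance of $\hat\gamma=\hat\gamma^*\circ q$ I would show that $\hat\gamma$ is nearly greedy for the Lipschitz function $J_\beta^*$ in the true model. Using that $\hat\gamma^*(z_i)$ minimizes the bin-average $\int_{Z_i}(T_u\hat J_\beta)(z')\hat\pi^*_{z_i}(dz')$, that this minimal value equals $\hat J_\beta(z_i)$, and that a belief-MDP optimal action at $z_i$ serves as a comparison action, and substituting $J_\beta^*$ for $\hat J_\beta$ before each use of Wasserstein continuity exactly as above, one obtains $(T_{\hat\gamma(z)}J_\beta^*)(z)\le J_\beta^*(z)+\rho$ for all $z$, with $\rho$ of the form $c_1\bar L+c_2\varepsilon$ for explicit constants $c_1,c_2$ built from $\alpha_c,\alpha_\eta,K$. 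Iterating the $\beta$-contraction $T_{\hat\gamma}$ on $J_\beta^*$ then yields $J_\beta(\cdot,\hat\gamma)\le J_\beta^*+\rho/(1-\beta)$, and since $J_\beta(\cdot,\hat\gamma)\ge J_\beta^*$ always, $\sup_z|J_\beta(z,\hat\gamma)-J_\beta^*(z)|\le\rho/(1-\beta)$. Substituting the bound on $\varepsilon$ and bookkeeping the constants gives the stated $\frac{2\alpha_c}{(1-\beta)^2(1-\beta\alpha_\eta)}\bar L$: the two factors $(1-\beta)^{-1}$ are the two geometric sums, and $(1-\beta\alpha_\eta)^{-1}$ is the Lipschitz modulus $K$.

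I expect the main obstacle to be structural rather than computational: the extended finite value function $\hat J_\beta$ is only piecewise constant, so the $W_1$-Lipschitz continuity of $\eta$ in Assumption~\ref{quantized_as}(3) cannot be applied to it directly (a difference of two transition kernels tested against a merely bounded function could be as large as their total-variation distance, which is not controlled here). The resolution, used repeatedly above, is to substitute the genuinely Lipschitz $J_\beta^*$ (or the iterate $J^{(n)}$) for $\hat J_\beta$ \emph{before} invoking Wasserstein continuity, paying only a once-discounted error because this substitution occurs under a single kernel integral; the Lipschitz function then absorbs the $W_1$-variation of $\eta$ and the discount absorbs the residual. Obtaining the constant $2\alpha_c$ exactly, rather than a larger multiple, requires performing these substitutions and the within-bin comparisons in the tightest order, but needs no further idea.
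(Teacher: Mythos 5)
Your two-stage strategy is the same one used in the cited source for this theorem (the present paper quotes it from \cite[Theorem 6]{KSYContQLearning} without proof): establish that $J_\beta^*$ is Lipschitz with modulus $K:=\alpha_c/(1-\beta\alpha_\eta)$ via $\mathrm{Lip}(TJ)\le \alpha_c+\beta\alpha_\eta\,\mathrm{Lip}(J)$, compare value iterations of the true and finite models, then convert to a policy loss. You also correctly isolate the real structural point: the $W_1$-Lipschitzness of $\eta$ cannot be tested against the piecewise-constant $\hat J_\beta$, so one must substitute the genuinely Lipschitz $J_\beta^*$ (or $J^{(n)}$) under the kernel integral first, paying a single discounted sup-norm error.

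The genuine gap is quantitative, and for a theorem whose entire content is an explicit constant it matters: the bookkeeping as you describe it does not produce $2\alpha_c$. In the value-difference recursion the per-step cost is $(\alpha_c+\beta K\alpha_\eta)\bar L=K\bar L$, not $(\alpha_c+\beta K\alpha_\eta+K)\bar L=2K\bar L$: the identity you derived already writes $\hat J^{(n+1)}(z)$ as the bin average $\min_u\int_{Z_i}(T_u\hat J^{(n)})(z')\hat\pi^*_{z_i}(dz')$, so the only within-bin move is from $z$ to $z'\in Z_i$, costing $\mathrm{Lip}(T_uJ^{(n)})\bar L\le K\bar L$; there is no additional ``move to the representative $z_i$'' penalty, and including one inflates $\varepsilon:=\|\hat J_\beta-J_\beta^*\|_\infty$ to $2K\bar L/(1-\beta)$. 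With the tight bound $\varepsilon\le K\bar L/(1-\beta)$, the policy step should be run as a coupled fixed-point comparison rather than a generic near-greedy bound: with $\Delta:=\sup_z|J_\beta(z,\hat\gamma)-J_\beta^*(z)|$ and $u=\hat\gamma(z)$ attaining the minimum in $\hat J_\beta(z)=\int_{Z_i}(T_u\hat J_\beta)(z')\hat\pi^*_{z_i}(dz')$, one gets $\Delta\le\beta\Delta+K\bar L+(1+\beta)\varepsilon$, hence $\Delta\le\bigl(K\bar L+(1+\beta)\varepsilon\bigr)/(1-\beta)=2K\bar L/(1-\beta)^2$, which is exactly the stated bound. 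Your version, with the inflated $\varepsilon$ fed into $\rho=c_1\bar L+c_2\varepsilon$, lands near $4\alpha_c\bar L/\bigl((1-\beta)^2(1-\beta\alpha_\eta)\bigr)$ --- the right structure and the right rate in $\bar L$, but short of the claimed constant by a factor of two.
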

A similar result is presented in the 
\cite{SaLiYuSpringer}, Theorem 4.38, offering
a slightly weaker bound.

Under Assumption \ref{main_assumption}, the belief MDP satisfies
 Assumption \ref{quantized_as} because \( {\cal P}(\mathbb{X}) \) is compact 
 under \( W_1 \) metric. It also follows that we have \( |\tilde{c}(z, u)-\tilde{c}(z', u)| \leq K_1 W_1(z,z') \)
for all \( z, z' \in {\cal P}(\mathbb{X}) \) and for all \( u \in \mathbb{U} \). 
Theorem \ref{ergodicity} implies \( W_1(\eta(\cdot \mid z, u), 
\eta(\cdot \mid z', u)) \leq K_2 W_1(z,z') \) for all 
\( z, z' \in {\cal P}(\mathbb{X}) \) and for all \( u \in \mathbb{U} \). 
Thus, for belief MDP, quantization provides the following bound:
\[ \sup _{z \in {\cal P}(\mathbb{X})}\left|J_\beta\left(z, \hat{\gamma}\right)-J_\beta^*\left(z\right)\right| \leq \frac{2 K_1}{(1-\beta)^2(1-\beta K_2)} \bar{L} . \]
Furthermore, quantized model gives near-optimal policy of the 
original belief MDP model as $\bar{L} \rightarrow 0$. 

If we only have weak Feller regularity of $\eta$ (e.g., under either Theorem \ref{TV_channel_thm} or \ref{TV_kernel_thm}), a similar approximation result holds though only by asymptotic convergence of the approximation error to zero as the diameters of the quantization bins converge to zero; see  \cite[Theorem 3]{SYLTAC2017POMDP} as an application of \cite{SaYuLi15c} and \cite[Theorem 4.27]{SaLiYuSpringer}.



In the following, an alternative approach is presented. 

\subsection{An Alternative Finite Window Belief-MDP Reduction and Its Approximation}\label{alt_section}

In this section we construct an alternative fully observed MDP reduction with the condition that the controller has observed at least $N$ information variables, using the predictor from $N$ stages earlier and the most recent $N$ information variables (that is, measurements and actions). This new construction allows us to highlight the most recent information variables and {\it compress} the information coming from the past history via the predictor as a probability measure valued variable.

Inspired from filter stability, consider the following: For any time step $t\geq N$ and for a fixed observation realization sequence $y_{[0,t]}$ and control action sequence $u_{[0,t-1]}$, the state process can be viewed as
\begin{align*}
&P^{\mu}(x_t\in \cdot|y_{[0,t]},u_{[0,t-1]})=P^{\pi_{{t-N}_-}}(X_t\in\cdot|y_{[t-N,t]},u_{[t-N,t-1]})
\end{align*}
where 
\begin{align*}
\pi_{{t-N}_-}(\cdot)=P^{\mu}(x_{t-N}\in \cdot|y_{[0,t-N-1]},u_{[0,t-N-1]}).
\end{align*}

That is, we can view the state as the Bayesian update of $\pi_{t-N_-}$, the predictor at time $t-N$, using the observations $y_{t-N},\dots,y_{t}$. Notice that with this representation only the most recent $N$ observation realizations are used for the update and the past information of the observations is embedded in $\pi_{t-N_-}$. 

We define the new state variable as the triple $(\pi_{t-N}^-,y_{[0,t-N-1]},u_{[0,t-N-1]})$.
We place the product metric on this new space: weak convergence on the belief and usual metric on the measurements and actions.



The idea is to quantize the new state as follows: collapse all $\pi$ to a fixed state $\hat{\pi}$, define an approximate finite MDP and establish performance bounds utilizing filter stability.

In the following, we will assume that $\mathbb{X}$ is $\mathbb{R}^n$ for some $n$ and that $\mathbb{U}, \mathbb{Y}$ are finite sets.

Define the quantization map $F$, such that for $(\pi,y_{[0,N]},u_{[0,N-1]})$
\begin{align*}
 F(\pi,y_{[0,N]},u_{[0,N-1]})=(\hat{\pi},y_{[0,N]},u_{[0,N-1]}).
\end{align*}

Using the map $F$ and the finite set $\mathcal{Z}^N$, one can define a finite belief MDP, and construct a policy for this finite model, by extending it, we can use the policy, say $\tilde{\phi}^N$  for the original model.


The cost function for the approximate model is
\begin{align*}
\hat{c}(\hat{z}^N_t,u_t)&=\hat{c}(\hat{\pi},I_t^N,u_t):=\tilde{c}(\phi(\hat{\pi},I_t^N),u_t)\\
&=\int_\mathbb{X}c(x_t,u_t)P^{\hat{\pi}}(dx_t|y_t,\dots,y_{t-N},u_{t-1},\dots,u_{t-N}).
\end{align*}
We define the controlled transition model for the approximate model by 
\begin{align}\label{eta_N}
&\hat{\eta}^N(\hat{z}_{t+1}^N|\hat{z}_t^N,u_t) \nonumber \\
&=\hat{\eta}^N(\hat{\pi},I_{t+1}^N|\hat{\pi},I_t^N,u_t):=\hat{\eta}\bigg({\cal P}(\mathbb{X}),I_{t+1}^N|\hat{\pi},I_t^N,u_t\bigg) 
\end{align}

We will write $\mathcal{Z}^N_{\hat{\pi}}$ to make the dependence on $\hat{\pi}$ and $N$ more explicit.



We denote the optimal value function for the approximate model by $J_\beta^N$, and the optimal policy for the approximate model by $\phi^N$. 

We investigate the following approximation error terms:
\begin{align*}
|J^N_\beta(\hat{z})-J^*_\beta(\hat{z})|, J_\beta(\hat{z},{\phi}^N) -J^*_\beta(\hat{z}).
\end{align*}
The first one is the difference between the optimal value function of the original model and that for the approximate model. The second term is the performance loss due to the policy calculated for the approximate model using finite memory being applied to the true model.

Building on \cite{kara2020robustness,KaraYuksel2021Chapter}, we can show that the loss is related to the term:
\begin{align}\label{loss_constant}
&L^N_t:= \sup_{\hat{\gamma}\in\hat{\Gamma}}E_{\pi_0^-}^{\hat{\gamma}}\bigg[\|P^{\pi_t^-}(X_{t+N}\in\cdot|Y_{[t,t+N]},U_{[t,t+N-1]}) \nonumber \\
& \quad \quad \qquad -P^{\hat{\pi}}(X_{t+N}\in\cdot|Y_{[t,t+N]},U_{[t,t+N-1]})\|_{TV} \bigg].
\end{align}

Let us elaborate on this term further. Consider the measurable policy space with respect to the new state space $\hat{\mathcal{Z}}={\cal P}(\mathds{X})\times \mathds{Y}^{N+1}\times \mathds{U}^{N}$ by $\hat{\Gamma}$. That is, a policy $\hat{\gamma}\in\hat{\Gamma}$ is a sequence of control functions $\{\hat{\gamma}_t\}$ such
that $\hat{\gamma}_t$ is measurable with respect to the $\sigma$-algebra
generated by the information variables $\{\hat{z}_0,\dots,\hat{z}_t\}$. $L^N_t$ above is then the expected bound on the total variation distance between the posterior distributions of $X_{t+N}$ conditioned on the same observation and control action variables $Y_{[t,t+N]},U_{[t,t+N-1]}$ when the prior distributions of $X_{t}$ are given by $\pi_t^-$ and $\pi^*$. The expectation is with respect to the random realizations of $\pi_t^-$ and $Y_{[t,t+N]},U_{[t,t+N-1]}$ under the true dynamics of the system when the prior distribution of $x_0$ is given by $\pi_0^-$.  This constant represents the bound on the distance of two processes with different starting points when they are updated with the same observation and action processes under the given policy. This term is directly related to filter stability, with bounds to be presented in the following.

\begin{theorem}\label{cont_bound}\cite{kara2021convergence} [Continuity of Value Functions]
For $\hat{z}_0=(\pi_0^-,I_0^N)$, if a policy $\hat{\gamma}$ acts on the first $N$ steps of the process which produces $I_0^N$, we then have
\begin{align*}
E_{\pi_0^-}^{\hat{\gamma}}\left[\left|\tilde{J}^N_\beta(\hat{z}_0)-J^*_\beta(\hat{z}_0)\right||I_0^N\right]\leq  \frac{\|c\|_\infty }{(1-\beta)}\sum_{t=0}^\infty\beta^tL^N_t
\end{align*}
\end{theorem}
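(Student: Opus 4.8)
The plan is to compare the value function $\tilde{J}^N_\beta$ of the approximate (collapsed-predictor) model with the true optimal value function $J^*_\beta$ by iterating the respective dynamic programming operators and tracking, stage by stage, the discrepancy introduced by replacing the true predictor $\pi_t^-$ with the fixed measure $\hat{\pi}$. Concretely, I would first set up both value functions via their fixed-point (Bellman) characterizations on the common enlarged state space $\hat{\mathcal{Z}} = {\cal P}(\mathbb{X}) \times \mathbb{Y}^{N+1} \times \mathbb{U}^N$: $J^*_\beta$ solves the DP equation for the true finite-window belief-MDP, while $\tilde{J}^N_\beta(\hat{z}_0)$ is the value, under the true dynamics, of applying the policy optimal for the collapsed model. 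Both one-stage costs and transition kernels agree \emph{except} that wherever the true model conditions $X_{t+N}$ on $(Y_{[t,t+N]}, U_{[t,t+N-1]})$ starting from prior $\pi_t^-$, the approximate model uses prior $\hat{\pi}$; by Theorem~\ref{cont_bound}'s hypotheses the first $N$ information variables $I_0^N$ are produced by the same policy $\hat{\gamma}$, so the comparison is clean from stage $N$ onward.

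Second, I would establish the one-step perturbation estimate: if the cost is bounded by $\|c\|_\infty$, then the error incurred at stage $t$ from using the wrong prior is bounded in expectation by $\|c\|_\infty \cdot L^N_t$, since $L^N_t$ is precisely the expected total-variation distance between the two posteriors of $X_{t+N}$ under the matched observation/action sequences, and total variation controls the difference of integrals of the bounded cost $c(\cdot, u_t)$. Third, I would unroll the discounted recursion: the stage-$t$ contribution to $|\tilde{J}^N_\beta(\hat{z}_0) - J^*_\beta(\hat{z}_0)|$ is discounted by $\beta^t$ and, because the cost incurred from stage $t$ onward is itself a discounted sum bounded by $\|c\|_\infty/(1-\beta)$, each mismatch propagates with an extra factor $\frac{\|c\|_\infty}{1-\beta}$; summing over $t$ and taking conditional expectation given $I_0^N$ yields
\begin{align*}
E_{\pi_0^-}^{\hat{\gamma}}\left[\left|\tilde{J}^N_\beta(\hat{z}_0)-J^*_\beta(\hat{z}_0)\right| \, \Big| \, I_0^N\right] \leq \frac{\|c\|_\infty}{1-\beta} \sum_{t=0}^\infty \beta^t L^N_t,
\end{align*}
as claimed.

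I expect the main obstacle to be the careful bookkeeping of \emph{which} conditional laws appear at each stage and verifying that, after the first $N$ steps, the true and approximate processes are indeed coupled through the \emph{same} realized observation and action sequences, so that the only source of divergence is the initial prior at each restart point --- this is what allows the error to be expressed through the filter-stability constant $L^N_t$ rather than an uncontrolled term. A secondary subtlety is handling the supremum over $\hat{\gamma} \in \hat{\Gamma}$ inside $L^N_t$: one must check that the worst-case policy bound in (\ref{loss_constant}) genuinely dominates the policy actually used in the DP recursion (the optimal policy of the collapsed model), which follows because that policy is admissible with respect to the enlarged state $\hat{z}$. Once these structural points are in place, the discounting and the total-variation-versus-bounded-cost inequality make the remaining steps routine.
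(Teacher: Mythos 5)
Your core machinery --- comparing the two dynamic programming recursions on the enlarged state space $\hat{\mathcal{Z}}$, bounding the per-stage discrepancy by $\|c\|_\infty$ times the total-variation distance of the two posteriors of $X_{t+N}$ (which in expectation is exactly $L^N_t$), and unrolling the discounted recursion so that each stage-$t$ mismatch contributes $\beta^t\frac{\|c\|_\infty}{1-\beta}L^N_t$ --- is the right approach and is essentially the argument in the cited reference. Your remarks on coupling the two models through the same realized $(Y,U)$ sequences and on why the supremum over $\hat{\gamma}\in\hat{\Gamma}$ in (\ref{loss_constant}) dominates the policy actually appearing in the recursion are also the correct structural points.

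However, you have misidentified the quantity being bounded. You define $\tilde{J}^N_\beta(\hat{z}_0)$ as ``the value, under the true dynamics, of applying the policy optimal for the collapsed model.'' That is the object of Theorem~\ref{robust_bound}, namely $J_\beta(\hat{z}_0,\tilde{\phi}^N)$, which carries the constant $\frac{2\|c\|_\infty}{1-\beta}$. In Theorem~\ref{cont_bound}, $\tilde{J}^N_\beta$ is the \emph{optimal value function of the approximate model itself} (the fixed point of the collapsed model's Bellman operator, extended to $\hat{\mathcal{Z}}$ via the quantization map $F$), and the theorem is a pure value-function comparison --- this is why the paper titles it ``Continuity of Value Functions'' and lists $|J^N_\beta(\hat{z})-J^*_\beta(\hat{z})|$ and $J_\beta(\hat{z},\phi^N)-J^*_\beta(\hat{z})$ as two distinct error terms. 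The distinction is not cosmetic: comparing the two optimal value functions directly (two fixed points, or the two finite-horizon value iterations) is what yields the stated constant $\frac{\|c\|_\infty}{1-\beta}$, each stage contributing $\|c\|_\infty L^N_t$ from the cost mismatch plus $\frac{\beta\|c\|_\infty}{1-\beta}L^N_t$ from the kernel mismatch, summing to $\frac{\|c\|_\infty}{1-\beta}L^N_t$. If you instead literally track the approximate-optimal policy under the true dynamics, you must insert an extra triangle inequality through the approximate model's value (using optimality of $\tilde{\phi}^N$ for the collapsed model), which doubles the constant and proves Theorem~\ref{robust_bound} rather than Theorem~\ref{cont_bound}. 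Replace your step one with the fixed-point characterization of the approximate model's \emph{optimal} value function and the rest of your argument goes through as written.
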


\begin{theorem}\label{robust_bound}\cite{kara2021convergence} [Near Optimality of Approximate Finite Window Model Solution applied to Actual Model]
For $\hat{z}_0=(\pi_0^-,I_0^N)$, with a policy $\hat{\gamma}$ acting on the first $N$ steps,
\begin{align}\label{Kara_yuksel}
E_{\pi_0^-}^{\hat{\gamma}}\left[\left|J_\beta(\hat{z}_0,\tilde{\phi}^N) -J^*_\beta(\hat{z}_0)\right||I_0^N\right]\leq  \frac{2\|c\|_\infty }{(1-\beta)}\sum_{t=0}^\infty\beta^tL^N_t.
\end{align}
\end{theorem}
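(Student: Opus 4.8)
The plan is to bound the performance loss by a triangle inequality that separates the two sources of error,
$$\left|J_\beta(\hat z_0,\tilde\phi^N)-J^*_\beta(\hat z_0)\right|\leq \left|J_\beta(\hat z_0,\tilde\phi^N)-\tilde J^N_\beta(\hat z_0)\right| + \left|\tilde J^N_\beta(\hat z_0)-J^*_\beta(\hat z_0)\right|,$$
where $\tilde J^N_\beta$ is the optimal value function of the finite-window approximate model $(\hat{\mathcal Z},\mathbb{U},\hat\eta^N,\hat c)$ extended to $\hat{\mathcal Z}$. The second term is exactly the quantity controlled by Theorem \ref{cont_bound}, so after taking $E^{\hat\gamma}_{\pi_0^-}[\,\cdot\mid I_0^N]$ it contributes at most $\frac{\|c\|_\infty}{1-\beta}\sum_{t}\beta^t L^N_t$. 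Hence the entire work is to show that the first term — the loss incurred by transporting the approximate-model optimal policy $\tilde\phi^N$ onto the true dynamics — obeys the same bound; the two pieces together then produce the factor $2$ in the claim.

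For the first term I would compare, stage by stage, the discounted cost accrued by $\tilde\phi^N$ in the true belief-MDP with the cost it accrues in the approximate model. The two models use the same action set and the same shift rule for the window coordinate $I_t^N$; they differ only in the predictor coordinate, which evolves as the true one-step predictor $\pi_t^-$ in one case and is frozen at $\hat\pi$ in the other. Since $\hat c(\cdot)$ — and, by the fixed-point equation for $\tilde J^N_\beta$, also the future value — depends on the predictor coordinate only through the posterior law of $X_{t+N}$ given $(Y_{[t,t+N]},U_{[t,t+N-1]})$, the stage-$t$ discrepancy is at most $\|c\|_\infty$ times the total variation distance between those two posteriors; taking expectation over the realized $\pi_t^-$, window, and observation/action history generated by the true system, and a supremum over the induced finite-window policies (to absorb the fact that $\tilde\phi^N$ is an arbitrary $\hat\Gamma$-measurable map of $\hat z_{[0,t]}$), this is precisely $L^N_t$. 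Discounting by $\beta^t$ and summing yields $\frac{\|c\|_\infty}{1-\beta}\sum_t\beta^t L^N_t$. This telescoping can be made rigorous either by a value-iteration argument, iterating the true and approximate Bellman operators $T$ and $\hat T^N$ and controlling $\|T^nf-(\hat T^N)^nf\|_\infty$ inductively with the per-stage error $L^N_t$, or by coupling the two controlled chains through a common noise realization so that the $Y$'s and $U$'s coincide on both sides; either route builds on \cite{kara2020robustness,KaraYuksel2021Chapter}.

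The main obstacle is the bookkeeping around measurability and the loss of the Markov property: under $\tilde\phi^N$ applied to the true system the finite-window state $(\pi_t^-,I_t^N)$ is Markov, but $\tilde\phi^N$ was designed for the coarser state $(\hat\pi,I_t^N)$, so the relevant conditional expectations must be taken with respect to the observation/action history rather than the state, and one must verify that the per-stage bound $L^N_t$ — defined with a supremum over $\hat\gamma\in\hat\Gamma$ and an expectation under the true dynamics initialized at $\pi_0^-$ — genuinely dominates the stage-$t$ error for every realization of the past and every admissible $\tilde\phi^N$. The conditioning on $I_0^N$, produced by a policy $\hat\gamma$ acting on the first $N$ steps, is exactly what makes the indices line up: the finite-window process is only well-defined once $N$ information variables have been observed, so the shifted process starts the sum at $t=0$. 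Once this is arranged, the per-stage estimates and the geometric summation are routine.
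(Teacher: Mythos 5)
The paper states this theorem only by citation to \cite{kara2021convergence} and does not reproduce a proof, but your argument — a triangle inequality through the approximate value function $\tilde J^N_\beta$, with Theorem \ref{cont_bound} controlling $|\tilde J^N_\beta - J^*_\beta|$ and a parallel stage-by-stage comparison of the true and approximate Bellman recursions under $\tilde\phi^N$ (each per-stage discrepancy bounded by $\|c\|_\infty$ times the total variation distance between the posteriors initialized at $\pi_t^-$ and at $\hat\pi$, hence by $L^N_t$ in expectation) controlling the policy-application term, with the two pieces producing the factor $2$ — is exactly the route taken in the cited source. The proposal is correct and takes essentially the same approach.
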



Via a somewhat different, and more direct, derivation,\cite[Section 4.2 and Theorem 17]{kara2020near} presented the following alternative condition involving sample path-wise uniform filter stability term
 \begin{align}\label{TVUnifB}
&\bar{L}^N_{TV}:=\sup_{z\in \mathcal{P}(\mathds{X})}\sup_{y_{[0,N]},u_{[0,N-1]}} \nonumber \\
& \left\|P^{z}(\cdot|y_{[0,N]},u_{[0,N-1]})-P^{z^*}(\cdot|y_{[0,N]},u_{[0,N-1]})\right\|_{TV},
\end{align}
to show the following {\it uniform} error bound:
\begin{align}\label{jmlrboundF}
\sup_z\left|J_\beta(z,\gamma_N)-J^*_\beta(z)\right|\leq \frac{2(1+(\alpha_{\mathcal{Z}}-1)\beta)}{(1-\beta)^3(1-\alpha_{\mathcal{Z}}\beta)}\|c\|_\infty \bar{L}^N_{TV}
\end{align}
for all $\beta \in (0,1)$ under a contraction condition, for some constant $\alpha_{\mathcal{Z}}$ defined in \cite{kara2020near}. Additionally, \cite[Theorem 9]{kara2020near} provided conditions where the error is in the bounded-Lipschitz metric (which is equivalent to the Wasserstein-1 metric when the state space $\mathbb{X}$ is compact), however these were only applicable for a restrictive subset of the discount parameter $\beta$. On the other hand, the bound in (\ref{Kara_yuksel}) is in expectation whereas the bound in (\ref{jmlrboundF}) is uniform, and thus the results are complementary.

\subsubsection{Explicit filter stability bounds on expected filter error $L^N_t$ and Sample Path Filter Error $\bar{L}^N_{TV}$}

 \cite{kara2021convergence} shows that the term $L^N_t$ can be bounded via Theorem \ref{curtis_result}: Recall that this states that
\begin{align}\label{boundL_t}
E^{\mu,\gamma}\left[\|\pi_{n}^{\mu,\gamma}-\pi_{n}^{\nu,\gamma}\|_{TV}\right]\leq 2\alpha^n.
\end{align}
which holds uniformly for all $\mu\ll\nu$ where $\alpha:=(1-\tilde{\delta}(\mathcal{T}))(2-\delta(Q))$, and $\delta(\cdot)$ denotes the Dobrushin coefficient of its argument (stochastic kernel). Since $\tilde{\delta}(\mathcal{T})$ is a uniform Dobrushin coefficient over all control actions, the above bound is valid under any control policy. Thus we have that $L^N_t \leq 2\alpha^N$. 

As a complementary condition, via the Birkhoff-Hopf theorem, a controlled version of a contraction via the Hilbert metric \cite{le2004stability} can be utilized \cite{demirciRefined2023}:

Recall that 
\[F(z, y, u)(\cdot)=\operatorname{Pr}\left\{X_{k+1} \in \cdot \mid Z_k=z, Y_{k+1}=y, U_k=u\right\}\]
\begin{assumption}\label{mixing_kernel_con}
    \begin{enumerate}
    \item $Q(y|x)\geq \epsilon > 0$ for every $x\in \mathbb{X}$ and $y\in \mathbb{Y}$.
    \item The transition kernel $\mathcal{T}(.|.,u)$ is a mixing kernel (see Definition \ref{mixingKernelDef}) for every $u\in\mathbb{U}$.
\end{enumerate} 
\end{assumption}

\begin{lemma}\label{clm}\cite{demirci2023geometric}
    Under Assumption \ref{mixing_kernel_con}, 
    there exists a constant $r < 1$ such that
    \begin{align}
        h(F(\mu, y,u), F(\nu, y,u))\leq r h(\mu, \nu)
    \end{align}
    for every comparable $\mu,\nu\in {\cal P}(\mathbb{X})$ and for every
    $u\in \mathbb{U}$ and $y\in \mathbb{Y}$. Here $r=\frac{1-\epsilon_{u}^2\epsilon }{1+\epsilon_{u}^2\epsilon},$ $\epsilon_{u}$ is the mixing constant of the kernel $\mathcal{T}(.|.,u)$.
\end{lemma}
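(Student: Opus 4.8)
The plan is to recognize the one-step filter update $F(\mu,y,u)$ as, up to normalization, the action of a \emph{single} nonnegative kernel on $\mu$, and then read off the contraction factor from the Birkhoff coefficient of Lemma \ref{Birkoff}. Writing the prediction step $\mathcal{T}^{}_u\mu(dx'):=\int_{\mathbb{X}}\mathcal{T}(dx'|x,u)\mu(dx)$ and then conditioning on $Y_{k+1}=y$, Bayes' rule gives
\[
F(\mu,y,u)(dx')=\frac{Q(y|x')\,(\mathcal{T}_u\mu)(dx')}{\int_{\mathbb{X}}Q(y|x'')\,(\mathcal{T}_u\mu)(dx'')}\ \propto\ (R_{y,u}\mu)(dx'),\qquad R_{y,u}(x,dx'):=Q(y|x')\,\mathcal{T}(dx'|x,u),
\]
where the denominator is strictly positive since $Q(y|\cdot)\geq\epsilon>0$ by Assumption \ref{mixing_kernel_con}(1). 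By the scale invariance $h(a\mu,b\nu)=h(\mu,\nu)$ of the Hilbert metric noted above, $h\big(F(\mu,y,u),F(\nu,y,u)\big)=h\big(R_{y,u}\mu,\,R_{y,u}\nu\big)$, so it suffices to bound the Birkhoff contraction coefficient $\tau(R_{y,u})$.

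Next I would check that $R_{y,u}$ is a mixing kernel in the sense of Definition \ref{mixingKernelDef}, with the dominating measure inherited from the mixing property of $\mathcal{T}(\cdot|\cdot,u)$. Let $\lambda_u$ satisfy $\epsilon_u\lambda_u(A)\leq\mathcal{T}(A|x,u)\leq\epsilon_u^{-1}\lambda_u(A)$ for all $x\in\mathbb{X}$ and Borel $A$ (Assumption \ref{mixing_kernel_con}(2)), and use $\epsilon\leq Q(y|x')\leq 1$ (the upper bound holds because $\mathbb{Y}$ is finite and $Q(\cdot|x')$ is a probability measure). Then, for every $x$ and every Borel $A$,
\[
\epsilon\,\epsilon_u\,\lambda_u(A)\ \leq\ \int_A Q(y|x')\,\mathcal{T}(dx'|x,u)\ =\ R_{y,u}(x,A)\ \leq\ \epsilon_u^{-1}\,\lambda_u(A).
\]
Every mixture $R_{y,u}\mu$ satisfies the same two-sided bound against $\lambda_u$, hence $h(R_{y,u}\mu,\lambda_u)\leq\log\!\big(\epsilon_u^{-1}/(\epsilon\epsilon_u)\big)=\log\!\big(1/(\epsilon\epsilon_u^{2})\big)$ for every $\mu$, and by the triangle inequality for $h$ the projective diameter of the image satisfies $H(R_{y,u})=\sup_{\mu,\nu}h(R_{y,u}\mu,R_{y,u}\nu)\leq 2\log\!\big(1/(\epsilon\epsilon_u^{2})\big)<\infty$.

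Finally, Lemma \ref{Birkoff} applies (since $H(R_{y,u})<\infty$) and gives $\tau(R_{y,u})=\tanh\!\big(\tfrac14 H(R_{y,u})\big)\leq\tanh\!\big(\tfrac12\log(1/(\epsilon\epsilon_u^{2}))\big)$; writing $e^{2s}=1/(\epsilon\epsilon_u^{2})$ this simplifies to $\tanh(s)=\tfrac{1-\epsilon_u^{2}\epsilon}{1+\epsilon_u^{2}\epsilon}=:r<1$. Combining with the first paragraph yields $h\big(F(\mu,y,u),F(\nu,y,u)\big)\leq\tau(R_{y,u})\,h(\mu,\nu)\leq r\,h(\mu,\nu)$ for all comparable $\mu,\nu\in\mathcal{P}(\mathbb{X})$ and all $u\in\mathbb{U}$, $y\in\mathbb{Y}$, with $r$ uniform in $y$. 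The step requiring the most care is the combined mixing estimate for $R_{y,u}$: one must keep the likelihood factor $Q(y|\cdot)$ attached to the post-transition variable $x'$ and carry its lower bound $\epsilon$ into the lower mixing constant $\epsilon\epsilon_u$ — this is exactly what produces the extra $\epsilon$ in $r$ (rather than the factor $\tfrac{1-\epsilon_u^{2}}{1+\epsilon_u^{2}}$ coming from the prediction step alone). Everything else is a routine invocation of the Birkhoff--Hopf theorem together with scale invariance of the Hilbert metric.
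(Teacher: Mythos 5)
Your proof is correct: the identification of $F(\mu,y,u)$ with the projective action of the combined kernel $R_{y,u}(x,dx')=Q(y|x')\mathcal{T}(dx'|x,u)$, the asymmetric two-sided bound $\epsilon\epsilon_u\lambda_u \leq R_{y,u}(x,\cdot)\leq \epsilon_u^{-1}\lambda_u$ giving $H(R_{y,u})\leq 2\log\bigl(1/(\epsilon\epsilon_u^2)\bigr)$, and the Birkhoff coefficient $\tanh\bigl(\tfrac12\log(1/(\epsilon\epsilon_u^2))\bigr)=\tfrac{1-\epsilon_u^2\epsilon}{1+\epsilon_u^2\epsilon}$ all check out, and keeping the asymmetric bounds (rather than symmetrizing to a single mixing constant $\epsilon\epsilon_u$) is exactly what recovers the stated $r$. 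The paper itself only cites \cite{demirci2023geometric} for this lemma without reproducing an argument, and yours is the standard Hilbert-metric/Birkhoff--Hopf derivation used there, so I consider it the same approach.
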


\begin{theorem}\label{d}\cite{demirciRefined2023}
    Under Assumption \ref{mixing_kernel_con}, 
    there exists a constant $r<1$ and $K$ such that 
    \begin{align}
        \bar{L}_{TV}^N \leq r^{N-1} K.
    \end{align}
    Here, $K=\frac{2}{\log 3} \sup h(Z_1, Z_1^*)$ and $r=\sup_{u\in \mathbb{U}} \frac{1-\epsilon_{u}^2\epsilon }{1+\epsilon_{u}^2\epsilon}$.
\end{theorem}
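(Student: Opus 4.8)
The plan is to realize the conditional law $P^{z}(\cdot|y_{[0,N]},u_{[0,N-1]})$ as the image of the prior $z$ under a composition of one-step filter-update maps, and then to exploit the strict Hilbert-metric contraction of those maps granted by Lemma \ref{clm}. Starting from a prior $z$ for $X_0$, one first incorporates $y_0$ by a Bayes update and then applies the maps $F(\cdot,u_m,y_{m+1})$ for $m=0,\dots,N-1$ successively; the resulting probability measure on $X_N$ is exactly $P^{z}(\cdot|y_{[0,N]},u_{[0,N-1]})$, and the same recipe applied to the reference prior $z^{*}$ produces $P^{z^{*}}(\cdot|y_{[0,N]},u_{[0,N-1]})$. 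Write $Z_1,Z_1^{*}$ for the two measures obtained after the first application of $F$ (one mixing transition followed by the Bayes update with $y_1$) and $Z_N,Z_N^{*}$ for the measures after all $N$ applications.

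First I would show that $\sup h(Z_1,Z_1^{*})$ is finite, the supremum being over all priors, observation windows, and action windows. This is where Assumption \ref{mixing_kernel_con} enters: pushing any probability measure forward through $\mathcal{T}(\cdot|\cdot,u)$ yields a measure sandwiched between $\epsilon_u\lambda$ and $\epsilon_u^{-1}\lambda$, and the subsequent Bayes update with $Q(y|\cdot)\in[\epsilon,1]$ merely rescales these bounds by factors $\epsilon^{\pm 1}$; hence $Z_1$ and $Z_1^{*}$ are each comparable to $\lambda$, so they are comparable to one another with a uniformly bounded Hilbert distance. Equivalently, the nonnegative kernel underlying $F(\cdot,u,y)$ is a mixing kernel, hence has finite Birkhoff diameter (Lemma \ref{Birkoff}).

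Next I would iterate. Since $Z_1,Z_1^{*}$ are comparable and every subsequent filter again remains comparable to $\lambda$ (each $F$ begins with a mixing transition), Lemma \ref{clm} applies at each of the remaining $N-1$ update steps and contracts the Hilbert distance by a factor $\frac{1-\epsilon_u^{2}\epsilon}{1+\epsilon_u^{2}\epsilon}\le r:=\sup_{u\in\mathbb{U}}\frac{1-\epsilon_u^{2}\epsilon}{1+\epsilon_u^{2}\epsilon}<1$. Chaining gives $h(Z_N,Z_N^{*})\le r^{N-1}h(Z_1,Z_1^{*})\le r^{N-1}\sup h(Z_1,Z_1^{*})$. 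Converting from the Hilbert metric to total variation via Lemma \ref{h-TV}(i) then yields $\|Z_N-Z_N^{*}\|_{TV}\le \frac{2}{\log 3}h(Z_N,Z_N^{*})\le r^{N-1}\,\frac{2}{\log 3}\sup h(Z_1,Z_1^{*})=r^{N-1}K$, and taking the supremum over $z$, $y_{[0,N]}$, and $u_{[0,N-1]}$ gives $\bar{L}^N_{TV}\le r^{N-1}K$, with exactly the claimed constants $K=\frac{2}{\log 3}\sup h(Z_1,Z_1^{*})$ and $r=\sup_{u\in\mathbb{U}}\frac{1-\epsilon_u^{2}\epsilon}{1+\epsilon_u^{2}\epsilon}$.

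The main obstacle is the bookkeeping in the first step: one must check, with explicit constants, that a single transition-plus-Bayes update forces an arbitrary pair of filters into the regime where Lemma \ref{clm} is applicable (comparability to a common reference measure), and that this comparability persists along the entire window, so that the contraction factor is uniform over the action sequence. This is also the reason one gets $r^{N-1}$ rather than $r^{N}$: the very first update acts on $z$ and $z^{*}$, which need not be comparable, so it can only be used to bring the pair into a state of finite Hilbert distance and cannot itself be counted as a $\times r$ contraction. Everything else is a routine chaining of Lemmas \ref{h-TV}, \ref{Birkoff}, and \ref{clm}.
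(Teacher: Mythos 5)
Your proposal is correct and follows essentially the same route as the cited proof: one initial transition-plus-Bayes update to force comparability and a finite Hilbert distance $\sup h(Z_1,Z_1^*)$ (via the mixing property of $\mathcal{T}$ and the lower bound $Q(y|x)\geq\epsilon$), then $N-1$ applications of the one-step Hilbert-metric contraction of Lemma \ref{clm}, and finally the conversion to total variation via Lemma \ref{h-TV}(i), which is exactly how the constants $K=\frac{2}{\log 3}\sup h(Z_1,Z_1^*)$ and the exponent $N-1$ arise. Your observation that the first step only establishes comparability and cannot be counted as a contraction is precisely the reason for the $r^{N-1}$ rather than $r^{N}$ in the statement.
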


\begin{corollary}\label{cor_kara_y}\cite{demirciRefined2023}
 Under Assumption \ref{mixing_kernel_con}, 
    there exists a constant $r<1$ and $K$ such that 
    \begin{align}
    E_{z_0^{-}}^{\hat{\gamma}}\left[\left|\tilde{J}_\beta^N\left(\hat{z}_0, \tilde{\phi}^N\right)
    -J_\beta^*\left(\hat{z}_0\right)\right| \mid I_0^N\right] 
    \leq \frac{2 \|c\|_{\infty}}{(1-\beta)^2} r^{N-1} K.
    \end{align}
        Here, $K=\frac{2}{\log 3} \sup h(Z_1, Z_1^*)$ and $r=\sup_{u\in \mathbb{U}} \frac{1-\epsilon_{u}^2\epsilon }{1+\epsilon_{u}^2\epsilon}$.
\end{corollary}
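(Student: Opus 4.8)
The plan is to combine the near-optimality estimate of Theorem~\ref{robust_bound} --- which bounds the performance loss of the finite-window policy $\tilde{\phi}^N$ applied to the true model by $\frac{2\|c\|_\infty}{1-\beta}\sum_{t=0}^\infty\beta^tL^N_t$, with $L^N_t$ the expected filter-mismatch term defined in (\ref{loss_constant}) --- with the explicit geometric decay of $L^N_t$ obtained from the Hilbert-metric contraction under Assumption~\ref{mixing_kernel_con}. Concretely, once we show $L^N_t\le r^{N-1}K$ uniformly in $t$ (and in the policy), the corollary drops out of (\ref{Kara_yuksel}) by summing a geometric series.

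For the filter bound, I would first observe that for any realization the integrand defining $L^N_t$ is a total-variation distance between two nonlinear filters of $X_{t+N}$ launched from the priors $\pi_t^-$ and $\hat{\pi}$ and then updated with the \emph{same} $N+1$ measurements $Y_{[t,t+N]}$ and $N$ actions $U_{[t,t+N-1]}$; hence $L^N_t\le \bar{L}^N_{TV}$ with $\bar{L}^N_{TV}$ as in (\ref{TVUnifB}), since $\pi_t^-\in\mathcal{P}(\mathbb{X})$ while the supremum in (\ref{TVUnifB}) ranges over all such priors and all measurement/action strings, and the quantity $\bar{L}^N_{TV}$ carries no policy dependence, so the supremum over $\hat{\gamma}$ hidden in $L^N_t$ is harmless. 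Theorem~\ref{d} then gives $\bar{L}^N_{TV}\le r^{N-1}K$, which is exactly the bound we need; its proof passes from total variation to the Hilbert metric via Lemma~\ref{h-TV}(i) (the factor $\frac{2}{\log 3}$ being absorbed into $K$) and then iterates the contraction of Lemma~\ref{clm}: under Assumption~\ref{mixing_kernel_con} the bound $Q(y\mid x)\ge\epsilon$ makes each measurement update a Hilbert-metric isometry, while the mixing property of $\mathcal{T}(\cdot\mid\cdot,u)$ makes the prediction-plus-update map $F(\cdot,y,u)$ a strict contraction with coefficient $r=\sup_{u\in\mathbb{U}}\frac{1-\epsilon_u^2\epsilon}{1+\epsilon_u^2\epsilon}<1$, uniformly in $u$ and $y$; applying this over the $N$ updates carrying the filter of $X_t$ to that of $X_{t+N}$ yields the factor $r^{N-1}$, with $K=\frac{2}{\log 3}\sup h(Z_1,Z_1^*)$.

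Finally I would put the pieces together: $\sum_{t=0}^\infty\beta^tL^N_t\le r^{N-1}K\sum_{t=0}^\infty\beta^t=\frac{r^{N-1}K}{1-\beta}$, and substituting into the bound of Theorem~\ref{robust_bound} gives $\frac{2\|c\|_\infty}{1-\beta}\cdot\frac{r^{N-1}K}{1-\beta}=\frac{2\|c\|_\infty}{(1-\beta)^2}r^{N-1}K$, which is the asserted inequality (with the conditional expectation over $I_0^N$ carried through verbatim from Theorem~\ref{robust_bound}).

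The main obstacle is the filter-stability step, not the bookkeeping. One has to verify that the two filters stay comparable at every step so that the Hilbert metric remains finite and Lemma~\ref{clm} is applicable --- this is where the mixing hypothesis on $\mathcal{T}$ is essential, since it forces any one-step predictor to be sandwiched between $\epsilon_u\lambda$ and $\epsilon_u^{-1}\lambda$ regardless of the prior, so that even mutually singular initializations $\pi_t^-$ and $\hat{\pi}$ produce comparable one-step updates and $\sup h(Z_1,Z_1^*)<\infty$. One must also check that the contraction factor $r$, and hence the whole bound, is uniform over admissible policies $\hat{\gamma}\in\hat{\Gamma}$, over control actions, and over observation realizations. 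A secondary point of care is matching the exact exponent: whether the initial measurement update with $Y_t$ is counted as an isometry or merged into the first contraction is what distinguishes $r^{N-1}$ from $r^{N}$, and this must be kept consistent with the normalization used in Theorem~\ref{d}.
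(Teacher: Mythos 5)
Your derivation is correct and is exactly the route the paper intends: the corollary follows by bounding the pointwise integrand in $L^N_t$ by the uniform term $\bar{L}^N_{TV}$ of (\ref{TVUnifB}) (so that $L^N_t \leq \bar{L}^N_{TV} \leq r^{N-1}K$ by Theorem \ref{d}, uniformly in $t$ and in $\hat{\gamma}$), substituting into the bound of Theorem \ref{robust_bound}, and summing the geometric series to produce the extra $(1-\beta)^{-1}$ factor. Your flagged points of care (comparability of the two filters under the mixing assumption, uniformity in $u$, $y$, and the policy, and the $r^{N-1}$ bookkeeping) all pertain to the already-cited Lemma \ref{clm} and Theorem \ref{d} rather than to the corollary itself, so nothing further is needed.
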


\begin{remark}
Among recent results, \cite{cayci2022} provides an upper error bound for finite window policies, under a persistence of excitation of the optimal policy and minorization-majorization assumptions, \cite{cayci2022} demonstrates that, as $N$ increases, the error term converges to zero geometrically. Unlike Assumption \ref{mixing_kernel_con}, a persistence of excitation of the optimal policy requires that the optimal policy must be strictly non-deterministic. We note also that the state space in our setup is not necessarily finite. 
\end{remark}

Implementing the above is still tedious, though now numerically possible. Can reinforcement learning be feasible? Can we view the finite history as an approximate {\it state} to run a learning algorithm? Would such an algorithm convergence, and what would such a convergence operationally mean? We address these questions, building on \cite{karayukselNonMarkovian} and \cite{kara2021convergence}, in the following section.

\subsection{Robustness to Incorrect Models and Priors}

To complete our analysis on existence, regularity, and approximations, and before proceeding with reinforcement learning, we also review recent results on robustness to incorrect priors and models.

Suppose that we represent the cost of the model given in (\ref{updateEq1})-(\ref{updateEq2}) and cost criteria (\ref{expCost}-\ref{expDiscCost}), so that the dependence on the prior $\mu$, transition kernel ${\cal T}$, measurement channel $Q$, and the stage-wise cost $c$ is explicitly given as follows:
\begin{eqnarray}\label{expCostR}
J_{\infty}(c,\mu,{\cal T}, Q,\gamma) := \limsup_{N \to \infty} {1 \over N} E^{\gamma}_{\mu}[\sum_{k=0}^{N-1} c(X_k, U_k)],
\end{eqnarray}
with the infimum
\[J^*_{\infty}(c,\mu,{\cal T}, Q) = \inf_{\gamma \in \Gamma} J_{\infty}(c,\mu,{\cal T}, Q, \gamma)\]
or the discounted cost criterion (for some $\beta \in (0,1)$
\begin{eqnarray}\label{expDiscCostR}
J_{\beta}(c,\mu,{\cal T}, Q,\gamma) :=E^{\gamma}_{\mu}[\sum_{k=0}^{\infty} \beta^k c(X_k, U_k)], 
\end{eqnarray}
with the infimum being
\[J^*_{\beta}(c,\mu,{\cal T}, Q) = \inf_{\gamma \in \Gamma} J_{\beta}(c,\mu,{\cal T}, Q, \gamma)\]

The robustness question involves the following. 

\noindent{\bf Problem P1: Continuity of Optimal Cost under the Convergence of Models.}
Let $\{\mu_n, {\cal T}_{n},Q_n, n\in\mathbb{N}\}$ be a sequence of priors, transition kernels, and channels which converges in some sense to another model $(\mu,{\cal T}, Q)$ and $\{c_n,n\in\mathbb{N}\}$ be a sequence of stage-wise cost functions corresponding to $(\mu_n,{\cal T}_n,Q_n)$ which converge in some sense to another cost function $c$ corresponding to $(\mu,{\cal T}, Q)$. Does that imply that
\begin{align*}
  J^*_{\beta}(c_n,\mu_n,{\cal T}_n, Q_n) \to J_\beta^*(c,\mu,{\cal T}, Q)?
\end{align*}

\noindent{\bf Problem P2: Robustness to Incorrect Models.} 
 A problem of major practical importance is robustness of an optimal controller to modeling errors. Suppose that an optimal policy is constructed according to a model which is incorrect: how does the application of the control to the true model affect the system performance and does the error decrease to zero as the models become closer to each other? In particular, suppose that $\gamma_n^*$ is an optimal policy designed for $\{c_n,\mu_n, {\cal T}_{n},Q_n, n\in\mathbb{N}\}$. Is it the case that if $(c_n,\mu_n,{\cal T}_n, Q_n)$ converges in some appropriate sense to $(c,\mu,{\cal T}, Q)$, then
\[J_\beta^*(c,\mu,{\cal T}, Q,\gamma_n^*) \to J_\beta^*(c,\mu,{\cal T}, Q).\] 
The case where only $\mu_n \to \mu$ while the other parameter are fixed is referred to as {\it robustness to incorrect priors}. \\

\noindent{\bf Problem P3:  Empirical Consistency of Learned Probabilistic Models and Data-Driven Stochastic Control.}
Let $(\mathcal{T}(\cdot|x,u), Q(\cdot|x))$ be the transition and measurement kernels, which is unknown to the decision maker (DM). Suppose the DM builds a model for these kernels, $(\mathcal{T}_n(\cdot|x,u), Q_n(dy|x))$, for all possible $x\in \mathbb{X}, u \in \mathbb{U}$ by collecting training data (e.g. from an evolving system). Do we have that the optimal cost calculated under $(\mathcal{T}_n,Q_n)$ converges to the true cost (i.e., do we have that the cost obtained from applying the optimal policy for the empirical model converges to the true cost as the training length increases)?

\subsubsection{Robustness to incorrect priors}

We refer the reader to \cite[Theorem 3.2]{KYSICONPrior} and with further refinements under filter stability \cite[Theorem 3.8]{MYRobustControlledFS} for discounted cost and \cite{MYRobustControlledFS}, Theorem 3.9] for average cost. These show that the problem is robust to uncertainty in priors under total variation, and for robustness under weak convergence, total variation continuity of the channel as in \ref{TV_channel}(ii) is to be imposed. Further regularity results are present in \cite{KYSICONPrior}. Under filter stability, stronger robustness conditions are presented in \cite{MYRobustControlledFS}.

\subsubsection{Robustness to incorrect models}

We refer the reader to \cite{kara2020robustness,KaraYuksel2021Chapter,kara2022robustness} for robustness to transition kernel, cost functions (and which also apply to that in measurement channels); and to \cite{YukselOptimizationofChannels,YukselBasarBook24} for the special case of convergence of measurement channels. To present a flavour of results, we state the following.
\begin{assumption}\label{weak_assmp}
\begin{itemize}
\item[(i)] The sequence of transition kernels $\mathcal{T}_n$ satisfies the following: $\{\mathcal{T}_n(\cdot|x_n,u_n),n\in\mathbb{N}\}$ converges weakly to $\mathcal{T}(\cdot|x,u)$ for any sequence $\{x_n,u_n\}\subset\mathbb{X}\times\mathbb{U}$ and $x,u \in \mathbb{X}\times\mathbb{U}$ such that $(x_n,u_n)\to (x,u)$ (this is referred to as {\it continuous} weak convergence \cite{kara2020robustness,KaraYuksel2021Chapter,kara2022robustness}).
\item [(ii)] The stochastic kernel $\mathcal{T}(\cdot|x,u)$ is weakly continuous in $(x,u)$.
\item[(iii)] The sequence of stage-wise cost functions $c_n$ satisfies the following: $c_n(x_n,u_n)$\linebreak$\to c(x,u)$ for any sequence $\{x_n,u_n\}\subset\mathbb{X}\times\mathbb{U}$ and $x,u \in \mathbb{X}\times\mathbb{U}$ such that $(x_n,u_n)\to (x,u)$.
\item[(iv)] The stage-wise cost function $c(x,u)$ is non-negative, bounded, and continuous on $\mathbb{X} \times \mathbb{U}$.
\item [(v)] $\mathbb{U}$ is compact. 
\end{itemize}
\end{assumption}

The following hold:
\begin{itemize}
\item[(a)] Continuity and robustness do not hold in general under weak convergence of kernels.
\item[(b)] Under Assumptions \ref{weak_assmp} and \ref{TV_channel}(ii), continuity and robustness hold.
\item[(c)] Continuity and robustness do not hold in general under setwise convergence of the kernels.
\item[(d)] Continuity and robustness do not hold in general under total variation convergence of the kernels.
\item[(e)] Continuity and robustness hold under {\it continuous} total variation convergence of the kernels (i.e. if $\mathcal{T}_n(\cdot|x,u_n) \to \mathcal{T}(\cdot|x,u)$ in total variation for any $u_n\to u$ and for any $x$).
\end{itemize}
The above has direct implications on data-driven learning and empirical consistency, where empirical models are constructed via data, and empirical models converge weakly (and under the $W_1$ distance) almost surely (\cite{Dud02}, Theorem 11.4.1), but do not so under total variation unless density conditions are present \cite[Chapter 3]{DevroyeGyorfi}; see \cite{kara2020robustness,KaraYuksel2021Chapter,kara2022robustness}.

\section{Reinforcement Learning for POMDPs: Discounted Cost}

\subsection{A General Convergence Result for Asymptotically Ergodic Processes}\label{nonMarkov_sec}

We summarize the following, building on \cite{karayukselNonMarkovian}. Let $\{C_t\}_{t}$ be $\mathbb{R}$-valued, $\{S_t\}_t$ be $\mathbb{S}$-valued and $\{U_t\}_{t}$ be $\mathbb{U}$-valued three stochastic processes. Consider the following iteration defined for each $(s,u)\in\mathbb{S}\times\mathbb{U}$ pair
\begin{align}\label{iterateAlgM}
Q_{t+1}(s,u)=&\left(1-\alpha_t(s,u)\right)Q_t(s,u) \nonumber\\
&+ \alpha_t(s,u)\left(C_t + \beta V_t(S_{t+1}) \right)
\end{align}
where $V_t(s)=\min_{u\in\mathbb{U}} Q_t(s,u)$, and $\alpha_t(s,u)$ is a sequence of constants also called the learning rates.  Note also that we overwrite the notation in this section by using $Q$ for the Q values which is different than the channel kernel $Q(\cdot|x)$.

Consider the following equation
\begin{align}
Q^*(s,u)= C^*(s,u) + \beta \sum_{s_1\in \mathbb{S}}V^*(s_1)P^*(s_1|s,u)\label{Q_fixed}
\end{align}
for some functions $Q^*$, $C^*$, to be defined explicitly, and for some regular conditional probability distribution $P^*(\cdot|s,u)$, where $V^*(u):=\min_u Q^*(s,u)$.

An umbrella sufficient condition is the following:
\begin{assumption}\label{erg_assmp}
$\mathbb{S},\mathbb{U}$ are finite sets, and the joint process $(S_{t+1},S_t,U_t,C_t)$ is asymptotically ergodic in the sense that for the given initialization random variable $S_0$, for any measurable function $f$, we have that with probability one,
\begin{align*}
&\lim_{N \to \infty} \frac{1}{N}\sum_{t=0}^{N-1} f(S_{t+1},S_t,U_t,C_t) \\
& \quad \quad = \int f(s_1,s,u,c)\phi(ds_1,ds,du,dc) 
\end{align*}
for some measure $\phi$ such that the marginal on the second and third coordinates $\phi(\mathbb{S} \times B \times \mathbb{R})>0$ for any non-empty $ B  \subset \mathbb{S}\times \mathbb{U}$.
\end{assumption}

We note that although we assume that the spaces $\mathds{S,U}$ are finite, we will continue using integral and differential notation for consistency.

The above implies Assumption \ref{main_assmp}(ii)-(iii) below:

\begin{assumption}\label{main_assmp}
\begin{itemize}
\item[i.] $\alpha_t(s,u)=0$ unless $(S_t,U_t)=(s,u)$. Furthermore,
\begin{align*}
\alpha_t(s,u)=\frac{1}{1+\sum_{k=0}^t1_{\{S_k=s,U_k=u\}}}
\end{align*}
and with probability $1$,
$\sum_t \alpha_t(s,u) = \infty$.
\item[ii.] For $C_t$, we have
\begin{align*}
\frac{\sum_{k=0}^t C_k 1_{\{S_k=s,U_k=u\}}}{\sum_{k=0}^t  1_{\{S_k=s,U_k=u\}}}\to C^*(s,u),
\end{align*}
almost surely for some $C^*$.
\item[iii.] For the $S_t$ process, we have, for any function $f$,
\begin{align*}
\frac{\sum_{k=0}^t f(S_{k+1}) 1_{\{S_k=s,U_k=u\}}}{\sum_{k=0}^t  1_{\{S_k=s,U_k=u\}}}\to \int f(s_1)P^*(ds_1|s,u)
\end{align*}
almost surely for some $P^*$.
\end{itemize}
\end{assumption}

Recently, \cite{karayukselNonMarkovian} presented conditions for the convergence of the iterates above: 
\begin{theorem}\label{main_thm}\cite{karayukselNonMarkovian}
Under Assumption \ref{main_assmp}, $Q_t(s,u)\to Q^*(s,u)$ almost surely for each $(s,u)\in\mathbb{S}\times\mathbb{U}$ pair where $Q^*$ satisfies (\ref{Q_fixed}).
\end{theorem}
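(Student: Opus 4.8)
The plan is to establish this as a stochastic-approximation (Robbins–Monro) result, adapting the classical argument of Jaakkola–Jordan–Singh / Tsitsiklis for Q-learning but without assuming the driving process is Markov — only that it is asymptotically ergodic in the averaged sense of Assumption \ref{main_assmp}. The iteration (\ref{iterateAlgM}) is an asynchronous fixed-point iteration for the operator $(\mathcal{F}Q)(s,u) := C^*(s,u) + \beta \sum_{s_1} V^*(s_1) P^*(s_1|s,u)$, which by (\ref{Q_fixed}) has $Q^*$ as its fixed point; note $\mathcal{F}$ is a $\beta$-contraction in $\|\cdot\|_\infty$ on $\mathbb{R}^{|\mathbb{S}||\mathbb{U}|}$ because the $\min$ operator defining $V$ is nonexpansive and $P^*(\cdot|s,u)$ is a probability measure. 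First I would decompose the update error $\Delta_t(s,u) := Q_t(s,u) - Q^*(s,u)$ into a "mean" contraction part and a noise part: write $C_t + \beta V_t(S_{t+1}) = (\mathcal{F}Q_t)(s,u) + w_t(s,u)$ on the event $\{S_t=s, U_t=u\}$, where $w_t$ collects (a) the deviation of the empirical cost/transition averages from their limits $C^*, P^*$, and (b) the martingale-type fluctuation of $V_t(S_{t+1})$ around its conditional mean.

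The key steps, in order: (1) Record the step-size properties from Assumption \ref{main_assmp}(i): $\alpha_t(s,u)=1/(1+N_t(s,u))$ with $N_t(s,u)=\sum_{k\le t}1_{\{S_k=s,U_k=u\}}$, so that $\sum_t\alpha_t(s,u)=\infty$ a.s.; and observe via Assumption \ref{erg_assmp} that each pair $(s,u)$ with $\phi(\mathbb{S}\times\{(s,u)\}\times\mathbb{R})>0$ is visited infinitely often with positive asymptotic frequency, hence $N_t(s,u)\to\infty$ and the standard $\sum\alpha_t=\infty$, $\sum\alpha_t^2<\infty$ conditions hold (the latter since the visit counts grow linearly). (2) Show that the "bias" piece vanishes: because $\alpha_t(s,u)$ is exactly the reciprocal-count weighting, the weighted running average $\sum_{k\le t}\alpha_k(\cdot)(\text{stuff})$ telescopes into the Cesàro averages appearing in Assumption \ref{main_assmp}(ii)-(iii), which converge to $C^*(s,u)$ and $\int f\,dP^*(\cdot|s,u)$ respectively; this is precisely where asymptotic ergodicity (rather than Markovianity) is used, and where one must be careful that $f = V_t$ is itself changing with $t$. (3) Handle the fact that $V_t$ is not a fixed function: bound $|V_t(s_1) - V^*(s_1)| \le \|\Delta_t\|_\infty$ and fold this into an inductive/bootstrapping argument on $\|\Delta_t\|_\infty$. (4) Combine (1)–(3) with the contraction property of $\mathcal{F}$ in the now-standard "asynchronous stochastic approximation with contraction" lemma (Tsitsiklis, or Jaakkola–Jordan–Singh): the iterate is sandwiched between sequences $X_{t+1} = (1-\alpha_t)X_t + \alpha_t\beta(\|\Delta_t\|_\infty + \epsilon_t)$ with $\epsilon_t\to 0$, which drives $\|\Delta_t\|_\infty\to 0$ a.s.

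The main obstacle I expect is step (3) together with the non-Markov subtlety in step (2): in the classical proof one splits $V_t(S_{t+1})$ into $\mathbb{E}[V^*(S_{t+1})\mid \mathcal{F}_t] = \int V^*\,dP^*(\cdot|s,u)$ (mean) plus a martingale-difference noise with summable conditional variances, and this decomposition relies on $S_{t+1}$'s conditional law given the past being exactly $P^*(\cdot|S_t,U_t)$. Here that is false — $P^*$ is only an \emph{ergodic-average} kernel, so $S_{t+1}$ given the past need not be distributed as $P^*(\cdot\mid S_t,U_t)$, and the "noise" $w_t$ is not a martingale difference. The remedy, which I believe is the technical heart of \cite{karayukselNonMarkovian}, is to avoid the martingale decomposition entirely and instead work directly with the weighted Cesàro sums: because $\alpha_t$ is the exact reciprocal-count sequence, $\sum_{k\le t}\alpha_k(s,u) 1_{\{S_k=s,U_k=u\}} g(S_{k+1}) \big/ \sum_{k\le t}\alpha_k(s,u)1_{\{\cdots\}}$ \emph{equals} the empirical average $\frac{1}{N_t(s,u)}\sum_{k: S_k=s,U_k=u, k\le t} g(S_{k+1})$, whose a.s. limit is exactly postulated in Assumption \ref{main_assmp}(iii); one then only needs a uniform-continuity / equicontinuity argument (over the finite set of possible $V$-vectors, or using $\|V_t-V^*\|_\infty\to 0$ from the induction) to pass from fixed $f$ to the moving $f=V_t$. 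Once this averaging identity is in hand, the convergence follows by the same contraction-plus-vanishing-perturbation machinery as in the Markov case, and the finiteness of $\mathbb{S},\mathbb{U}$ keeps all the sup-norm manipulations legitimate.
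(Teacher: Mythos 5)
Your proposal is essentially the argument used in the cited reference: the paper itself gives no proof of Theorem~\ref{main_thm} (it is quoted from \cite{karayukselNonMarkovian}), and the proof there rests exactly on the observation you identify as the technical heart — with $\alpha_t(s,u)=1/(1+\sum_{k\le t}1_{\{S_k=s,U_k=u\}})$ the iterate $Q_t(s,u)$ is an \emph{exact} empirical average of the targets $C_{t_k}+\beta V_{t_k}(S_{t_k+1})$ over the visit times, so Assumption~\ref{main_assmp}(ii)--(iii) replaces the martingale-difference decomposition of the Markovian proofs, and the moving function $V_t$ is absorbed via $\|V_t-V^*\|_\infty\le\|Q_t-Q^*\|_\infty$ together with boundedness of the iterates, yielding $\limsup_t\|Q_t-Q^*\|_\infty\le\beta\limsup_t\|Q_t-Q^*\|_\infty$ and hence convergence. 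Your sandwich formulation in step (4) is an equivalent packaging of this limsup-contraction step; the only cosmetic slip is the phrase ``finite set of possible $V$-vectors'' (the $V_t$ range over a continuum), but the $\|V_t-V^*\|_\infty$ bound you also give is what is actually needed.
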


It turns out that (\ref{Q_fixed}) is the fixed point corresponding to an approximate MDP, with implications for POMDPs noted in the following (see also \cite{karayukselNonMarkovian}).

\subsection{Finite Window Memory POMDP with Uniform Geometric Controlled Filter Stability}
We will here assume that $\mathbb{X}$ is a compact subset of a Polish space and that $\mathbb{Y}$ and $\mathbb{U}$ are finite sets.

Suppose that the controller keeps a finite window of the most recent $N$ observation and control action variables, and perceives this as the {\it state} variable, which is in general non-Markovian. That is we take \[S_t = \{Y_{[t-N,t]},U_{[t-N,t-1]}\},\] and $C_t:=c(X_t,U_t)$.

In this case, $(S_t,X_t,U_t)$ form a controlled Markov chain, even if $(S_t,U_t)$ does not. We state the ergodicity assumption formally next.

\begin{assumption}\label{POMDP_unif_erg}
\begin{itemize}

\item[(i)] Under the exploration policy $\gamma$ and initialization, the controlled state and control action joint process $\{X_t,U_t\}$ is asymptotically ergodic in the sense that for any measurable function $f$ we have that
\begin{align*}
\lim_{N\to \infty}\frac{1}{N}\sum_{t=0}^{N-1}f(X_t,U_t)=\int f(x,u)\phi^\gamma(dx,du)
\end{align*}
for some $\phi^\gamma \in{\cal P}(\mathbb{X}\times\mathbb{U})$. Furthermore, we have that $P(Y_t = y | x) > 0$ for every $x \in \mathbb{X}$.
\item[(ii)] Assumption \ref{erg_assmp}(i) holds with $S_t = \{Y_{[t-N,t]},U_{[t-N,t-1]}\}$. 
\end{itemize}
\end{assumption}

We note that a sufficient condition for the ergodicity assumption, for every initialization of $X_0$, would be positive Harris recurrence under the exploration policy.

The question then is if the limit Q values correspond to a meaningful control problem, and how `close' this control problem to the original POMDP. \cite[Theorem 4.1]{kara2021convergence} shows that the limit Q values indeed correspond to an approximate control problem, and notes the following bound:

\begin{theorem} \cite[Theorem 4.1]{kara2021convergence}
Under Assumption \ref{POMDP_unif_erg}, the iterations in (\ref{iterateAlgM}) converge with $S_t = \{Y_{[t-N,t]},U_{[t-N,t-1]}\}$ and $C_t:=c(X_t,U_t)$. Furthermore, if we denote the policies constructed using these Q values by $\gamma^N$, and apply these finite memory policies in the original problem, we get the following error bound:
\begin{align*}
E\left[J_\beta(\pi_N^-,\mathcal{T},\gamma^N)-J_\beta^*(\pi_N^-,\mathcal{T})|I_0^N\right]\leq \frac{2\|c\|_\infty }{(1-\beta)}\sum_{t=0}^\infty\beta^t L^N_t
\end{align*}
where $I_0^N$ is the first $N$ observation and control variables, and the expectation is taken with respect to different realizations of $I_0^N$ under the initial distribution of the hidden state $\pi_0$ and the exploration policy $\gamma$. Furthermore, $\pi_N^-=P(X_N\in\cdot|I_0^N)$
where $L^N_t$ is given by (\ref{loss_constant}) with the fixed prior is the invariant measure on $x_t$ under the exploration policy $\gamma$. In particular, we assume that the control starts after observing at least $N$- time steps of history.
\end{theorem}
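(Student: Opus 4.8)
The plan is to split the claim into two parts: (a) convergence of the iteration (\ref{iterateAlgM}) when the ``state'' is the length-$N$ window $S_t=\{Y_{[t-N,t]},U_{[t-N,t-1]}\}$ and $C_t=c(X_t,U_t)$, which I would obtain by checking the hypotheses of the general convergence result Theorem~\ref{main_thm}; and (b) identifying the limit $Q^*$ of (\ref{Q_fixed}) with the optimal state--action value function of the finite-window approximate belief-MDP of Section~\ref{alt_section}, the fixed prior $\hat\pi$ being chosen as the invariant measure of the hidden state under the exploration policy. Once (b) is in place, $\gamma^N$ is the extended optimal policy $\tilde\phi^N$ of that model and the bound is exactly (\ref{Kara_yuksel}), i.e.\ Theorem~\ref{robust_bound}.

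For (a): since $\mathbb{Y},\mathbb{U}$ are finite, $\mathbb{S}=\mathbb{Y}^{N+1}\times\mathbb{U}^{N}$ is finite, so $\{S_t\}$ is $\mathbb{S}$-valued. Assumption~\ref{main_assmp}(i) on the learning rates is exactly what Assumption~\ref{POMDP_unif_erg}(ii) imposes. For Assumption~\ref{main_assmp}(ii)--(iii), I would use that $(X_t,S_t,U_t)$ is a controlled Markov chain and that $(S_{t+1},S_t,U_t,C_t)$ is a fixed measurable functional of the block $(X_{[t-N,t+1]},U_{[t-N,t]})$ (observations entering only through $Q$); hence the asymptotic ergodicity of $\{X_t,U_t\}$ assumed in Assumption~\ref{POMDP_unif_erg}(i) --- for which positive Harris recurrence under the exploration policy is a sufficient condition --- lifts to asymptotic ergodicity of $(S_{t+1},S_t,U_t,C_t)$ toward some $\phi$, and the required positivity $\phi(\mathbb{S}\times B)>0$ for nonempty $B\subset\mathbb{S}\times\mathbb{U}$ follows from the support of $\phi^\gamma$ on state--action pairs together with $P(Y_t=y\mid x)>0$ for all $x,y$, which forces every observation window to recur. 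Boundedness of $c$ keeps the iterates bounded, and Theorem~\ref{main_thm} then yields $Q_t(s,u)\to Q^*(s,u)$ almost surely, simultaneously over the finitely many $(s,u)$, so the greedy policy $\gamma^N$ is well defined a.s.

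For (b): by construction $C^*(s,u)=E_\phi[c(X_t,U_t)\mid S_t=s,U_t=u]$ and $P^*(\cdot\mid s,u)$ is the stationary one-step law of the window $S_{t+1}$ given $(S_t,U_t)=(s,u)$; using that $U_t$ is conditionally independent of $X_t$ given $S_t$, both are determined by $\rho_s:=P_\phi(X_t\in\cdot\mid S_t=s)$. The crux is the Bayes identity that, in the stationary regime, $X_{t-N}$ has marginal $\hat\pi$ (the $\mathbb{X}$-marginal of $\phi^\gamma$) and $\rho_s$ is the Bayesian update of $\hat\pi$ along the window $(y_{[t-N,t]},u_{[t-N,t-1]})$, i.e.\ $\rho_s=P^{\hat\pi}(X_t\in\cdot\mid y_{[t-N,t]},u_{[t-N,t-1]})$. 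Granting this, $C^*(s,u)=\hat c(\hat\pi,s,u)$ and $P^*(\cdot\mid s,u)=\hat\eta^{N}(\cdot\mid\hat\pi,s,u)$ of Section~\ref{alt_section} (see (\ref{eta_N})), so (\ref{Q_fixed}) is the discounted optimality equation of the finite MDP $(\mathcal{Z}^N_{\hat\pi},\mathbb{U},\hat\eta^N,\hat c)$; thus $Q^*$ is its optimal value function and $\gamma^N=\tilde\phi^N$. Finally, identifying the state $(\pi_N^-,I_0^N)$ reached after the prescribed $N$-step burn-in with the initial state of this finite-window model --- so that $J_\beta(\pi_N^-,\mathcal{T},\gamma^N)$ and $J_\beta^*(\pi_N^-,\mathcal{T})$ are the two quantities compared in (\ref{Kara_yuksel}), and the difference is nonnegative since $\gamma^N$ is in general suboptimal so the absolute value drops --- Theorem~\ref{robust_bound} with fixed prior $\hat\pi$ gives exactly $E[J_\beta(\pi_N^-,\mathcal{T},\gamma^N)-J_\beta^*(\pi_N^-,\mathcal{T})\mid I_0^N]\le\frac{2\|c\|_\infty}{1-\beta}\sum_{t\ge0}\beta^t L^N_t$.

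The main obstacle is the Bayes identity in (b): one must show that the ergodic-average limits $C^*,P^*$ collapse precisely to the \emph{fixed-prior} finite-window model with prior the invariant marginal $\hat\pi$, i.e.\ that conditioning the hidden state $X_t$ on the length-$N$ window alone, under stationarity, is the Bayesian update of the stationary marginal of $X_{t-N}$ along that window. This is immediate in the control-free case and when the exploration mechanism does not feed pre-window observations back into the window; in the general feedback case one needs additional care (or a corresponding restriction on the exploration policy) to rule out extra correlation between $X_{t-N}$ and the conditioning event. Everything else is the two cited theorems plus bookkeeping.
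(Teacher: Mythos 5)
Your proposal is correct and follows essentially the same route as the paper's (cited) argument: convergence via the general non-Markovian Q-learning result of Theorem~\ref{main_thm} after lifting ergodicity of $(X_t,U_t)$ to the window process, then identification of the limit fixed point with the finite-window model of Section~\ref{alt_section} whose fixed prior is the invariant measure, so that Theorem~\ref{robust_bound} delivers the bound. The ``Bayes identity'' you flag as the main obstacle is indeed the crux, and it is handled exactly as you suggest --- by taking the exploration policy memoryless (i.i.d.\ randomized actions), which decouples the pre-window history from the window and makes the stationary conditional law of $X_t$ given $S_t$ equal to the Bayesian update of the invariant marginal of $X_{t-N}$.
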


As noted earlier, $L^N_t$ is related to the filter stability problem, see (\ref{boundL_t}). 



\subsection{Quantized Approximations for Weak Feller POMDPs with only Asymptotic Filter Stability}

As noted earlier, any POMDP can be reduced to a completely observable Markov process (\cite{Yus76}, \cite{Rhe74}) (see (\ref{kernelFilter})), whose states are the posterior state distributions or {\it belief}s of the observer; that is, the state at time $t$ is the filter variable
\begin{align}
\pi_t(\,\cdot\,) := P\{X_{t} \in \,\cdot\, | y_0,\ldots,y_t, u_0, \ldots, u_{t-1}\} \in {\cal P}(\mathbb{X}). \nonumber
\end{align}
Recall the kernel $\eta$ (\ref{kernelFilter}) for the filter process. Now, by combining the quantized Q-learning and the weak Feller continuity results for the non-linear filter kernel (\cite{FeKaZg14} \cite{KSYWeakFellerSysCont}), we can conclude that the setup in Section \ref{nonMarkov_sec} and Section \ref{finite_belief} is applicable though with a significantly more tedious analysis involving ergodicity requirements. Additionally, one needs to quantize probability measures. Accordingly, we take $S_t = g(\pi_t)$ for some quantizer $g:{\cal P}(\mathbb{X}) \to {\cal P}(\mathbb{X})^M=:\{B_1,B_2, \cdots, B_{|{\cal P}(\mathbb{X})^M|}\}$ with $|{\cal P}(\mathbb{X})^M| < \infty$, and $C_t:=c(X_t,U_t)$. 

We state the ergodicity condition formally:

\begin{assumption}\label{belief_MDP_unif_erg}
Under the exploration policy $\gamma$ and initialization, the controlled belief state and control action joint process $\{\pi_t,U_t\}$ is asymptotically uniquely ergodic in the sense that for any measurable function $f$ we have that
\begin{align*}
\lim_{N\to \infty}\frac{1}{N}\sum_{t=0}^{N-1}f(\pi_t)=\int f(\pi)\eta^\gamma(d\pi)
\end{align*}
for some $\eta^\gamma \in{\cal P}({\cal P}(\mathbb{X})\times\mathbb{U})$ such that $\eta^\gamma(B)>0$ for any quantization bin $B \subset {\cal P}(\mathbb{X})$.
\end{assumption}

We refer to the set
\[{\cal P}_{\eta} := \{ \pi:  \pi \in B_i \subset {\cal P}(\mathbb{X}): \eta^\gamma(B_i)>0\},\]
as the trained set of states; since these sets will be visited infinitely often under the exploration policy. 

The condition that $\eta^\gamma(B)>0$ requires an analysis tailored for each problem. For example, if the quantization is performed as in \cite{kara2020near} by clustering bins based on a finite past window, then the condition is satisfied by requiring that $P(Y_t = y | x) > 0$ for every $x \in \mathbb{X}$. If the clustering is done, e.g. by quantization of the probability measures via first quantizing $\mathbb{X}$ and then quantizing the probability measures on the finite set (see \cite[Section 5]{SYLTAC2017POMDP}), then the initialization could be done according to the invariant probability measure corresponding to the hidden Markov source.

Unique ergodicity of the dynamics follows from results in the literature, such as, \cite[Theorem 2]{DiMasiStettner2005ergodicity} and \cite[Prop 2.1]{van2009uniformSPA}, which holds when the randomized control is memoryless under mild conditions on the process notably, the hidden variable is a uniquely ergodic Markov chain and the measurement structure satisfies filter stability in total variation in expectation (one can show that weak merging in expectation also suffices); we refer the reader to \cite[Figure 1]{mcdonald2018stability} for mild conditions leading to filter stability in this sense, which is related to stochastic observability in Definition \ref{one_step_observability} (see also \cite[Definition II.1]{mcdonald2018stability}). Notably, a uniform and geometric controlled filter stability is not required even though this would be sufficient. Therefore, due to the weak Feller property of controlled non-linear filters, we can apply the Q-learning algorithm to also belief-based models to arrive at near optimal control policies. Nonetheless, since positive Harris recurrence cannot typically be assumed for the filter process, the initial state may not be arbitrary. If the invariant measure under the exploration policy is the initial state, \cite[Prop 2.1]{van2009uniformSPA} implies that the time averages will converge as imposed in Assumption \ref{main_assmp}. A sufficient condition for unique ergodicity then is the following.
\begin{assumption}
Under the exploration policy $\gamma$ the hidden process $\{X_t\}$ is uniquely ergodic (with measure $\zeta$) and the measurement dynamics are so that the filter is stable in expectation under weak convergence.
\end{assumption}

\begin{theorem} 
Suppose that Assumption \ref{quantized_as} holds such that $\alpha_\eta\beta<1$. 
\begin{itemize}
\item[(a)]
Suppose that under the exploration policy and initialization, the controlled filter process satisfies Assumption \ref{belief_MDP_unif_erg} and \ref{main_assmp}(i) with $S_t=g(\pi_t)$, and $C_t=c(X_t,U_t)$. Then, the $Q_t$ iterates converge almost surely. 
\item[(b)] Let $\pi_0 \sim \kappa \ll \phi$ or $\pi_0 \in \mathrm{supp}(\phi)$ and under $\eta^{\gamma}$ the boundary sets of the bins have measure zero. Then, 
 the policy constructed using the limit $Q$ values, say $\hat{\gamma}$, applied to the true model leads to the following bound:
\begin{align*}
J_\beta^*(\pi_0,\hat{\gamma})- J_\beta^*(\pi_0)\leq  \frac{2\alpha_c}{(1-\beta)^2(1-\beta\alpha_{\eta})}\bar{L}.
\end{align*} 
where 
\[\bar{L}:=\sup_{\pi \in \mathrm{supp( \eta^{\gamma})}, \pi \in B_i: \eta^{\gamma}(B_i) > 0} W_1\left(\pi,g(\pi)\right).\]

\item[(c)] For asymptotic convergence (without a rate of convergence) to optimality as the quantization rate goes to $\infty$ (i.e., $\bar{L} \to 0$), only weak Feller property of $\eta$ is sufficient for the the algorithm to be near optimal. 
\end{itemize}
\end{theorem}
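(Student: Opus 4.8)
The plan is to obtain all three parts from the general $Q$-learning convergence result, Theorem \ref{main_thm}, together with the quantized-model error bound of Theorem \ref{theoremBoundAvg}; the only genuinely new work is (i) to verify the ergodic hypotheses Assumption \ref{main_assmp}(ii)--(iii) for the belief-quantized iteration and to identify the resulting fixed point as the optimality equation of a specific finite approximate belief-MDP, and (ii) to confine the learned trajectory to the trained bins so that the global quantization estimate can be localized.

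For part (a), Assumption \ref{main_assmp}(i) is assumed, so it remains to establish \ref{main_assmp}(ii)--(iii) with $S_t=g(\pi_t)$ and $C_t=c(X_t,U_t)$. Taking the exploration policy to be memoryless, $(\pi_t,U_t)$ is a Markov chain whose unique ergodicity is exactly Assumption \ref{belief_MDP_unif_erg}; since $\pi_{t+1}=F(\pi_t,U_t,Y_{t+1})$ with $Y_{t+1}\sim H(\cdot\mid\pi_t,U_t)$, the block chain $(\pi_t,U_t,\pi_{t+1})$ is uniquely ergodic with invariant law $\eta^\gamma(d\pi,du)\,\eta(d\pi_1\mid\pi,u)$. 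Applying Birkhoff's theorem to this block chain and dividing by the denominator $\sum_k 1_{\{g(\pi_k)=s,U_k=u\}}$, which grows linearly because $\eta^\gamma(g^{-1}(s)\times\{u\})>0$ (I read into ``exploration policy'' that every action is played with positive probability from every state), gives \ref{main_assmp}(iii) with $P^*(\cdot\mid s,u)$ the $\eta^\gamma$-conditional average of $\pi\mapsto\eta(g^{-1}(\cdot)\mid\pi,u)$ over the bin $g^{-1}(s)$. For \ref{main_assmp}(ii) the extra point is that $C_t=c(X_t,U_t)$ depends on the hidden state; here I would use that $X_t\mid\pi_t\sim\pi_t$ to lift the stationary $(\pi_t,U_t)$ chain to a uniquely ergodic chain on $(X_t,\pi_t,U_t)$ (the embedding of the hidden state into the belief process, made unique by the asymptotic filter stability available in this setting, cf.\ the discussion of \cite{DiMasiStettner2005ergodicity,van2009uniformSPA}), so that the bin-averages of $C_t$ converge to $C^*(s,u)$, the $\eta^\gamma$-conditional average of $\pi\mapsto\tilde c(\pi,u)$ over $g^{-1}(s)$; the hypothesis that the bin boundaries are $\eta^\gamma$-null is what makes $g$ ($\eta^\gamma$-a.s.) continuous so these averages are well posed. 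Theorem \ref{main_thm} then yields $Q_t\to Q^*$ a.s.

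For part (b), by construction $Q^*$ solves (\ref{Q_fixed}) with these $C^*,P^*$, so it is the optimal $Q$-function of the \emph{finite} MDP on the trained bins whose stage cost and kernel are those $\eta^\gamma$-conditional bin averages of $\tilde c$ and $\eta$ --- precisely the quantized approximate belief-MDP of Section \ref{finite_belief}, with the weight measure on each bin $B_i$ taken to be the $\eta^\gamma$-conditional law on $B_i$ --- and $\hat\gamma(\pi)=\arg\min_u Q^*(g(\pi),u)$ is its optimal policy extended through $g$. Under Assumption \ref{quantized_as} with $\alpha_\eta\beta<1$ the belief-MDP satisfies the hypotheses of Theorem \ref{theoremBoundAvg}, whose only gap here is that it quantizes all of ${\cal P}(\mathbb{X})$ while only the trained bins carry a learned value. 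I would close this by observing that, since the exploration policy uses every action with positive probability and $\eta^\gamma$ is invariant, $\mathrm{supp}(\phi)$ is forward invariant under $\pi_{t+1}=F(\pi_t,u,Y_{t+1})$ for \emph{every} action $u$; hence for $\pi_0\sim\kappa\ll\phi$ (or, using the boundary hypothesis to preclude splitting across bins, $\pi_0\in\mathrm{supp}(\phi)$) every policy --- in particular both $\hat\gamma$ and an optimal policy --- keeps the belief $P$-a.s.\ inside the trained bins, so the error analysis of Theorem \ref{theoremBoundAvg} runs entirely within the trained bins and delivers the stated bound with $\bar L=\sup_{\pi\in\mathrm{supp}(\eta^\gamma),\,\pi\in B_i:\,\eta^\gamma(B_i)>0}W_1(\pi,g(\pi))$.

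For part (c), the argument for part (a) used only the ergodicity and step-size hypotheses, so the iterates still converge to the optimal value of the $\eta^\gamma$-weighted finite approximate belief-MDP; what is lost by dropping the Lipschitz/Wasserstein parts of Assumption \ref{quantized_as} is only the rate. Since $\mathbb{X}$ --- hence ${\cal P}(\mathbb{X})$ under $W_1$, which metrizes weak convergence --- is compact and $\eta$ is weakly Feller (e.g.\ via Theorem \ref{TV_channel_thm} or \ref{TV_kernel_thm}) with $\tilde c$ continuous and bounded, the finite-model approximation theory for weakly continuous MDPs (\cite[Theorem 3]{SYLTAC2017POMDP}, \cite[Theorem 4.27]{SaLiYuSpringer}, \cite{SaYuLi15c}), which accommodates arbitrary quantizers of vanishing diameter and arbitrary strictly positive weight measures, gives that the approximate value functions and induced policies converge to optimality as $\bar L\to0$, without a rate; composing this with the convergence in part (a) (and the confinement argument of part (b)) yields the claim. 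I expect the main obstacle to be the ergodic identification in part (a) --- lifting ergodicity of $(\pi_t,U_t)$ to control of the hidden-state--dependent averages $C_t=c(X_t,U_t)$ via the embedding and filter stability --- together with the forward-invariance argument in part (b) needed to reconcile the inherently trained-subset nature of the learned policy with the global estimate of Theorem \ref{theoremBoundAvg}.
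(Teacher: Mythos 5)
Your proposal is correct and follows essentially the same route the paper intends: part (a) by verifying Assumption \ref{main_assmp}(ii)--(iii) from unique ergodicity of the (lifted) filter process and invoking Theorem \ref{main_thm}, part (b) by identifying the limit $Q^*$ with the optimal $Q$-function of the finite model of Section \ref{finite_belief} with weight measure $\eta^\gamma$ and then applying Theorem \ref{theoremBoundAvg} restricted to the trained bins via the forward-invariance/confinement argument (the point the paper defers to \cite[Lemma 6 and Corollary 2]{creggZeroDelayNoiseless}), and part (c) by the asymptotic weak-Feller approximation results of \cite{SYLTAC2017POMDP,SaLiYuSpringer}. The details you add --- lifting ergodicity of $(\pi_t,U_t)$ to the hidden-state-dependent averages of $c(X_t,U_t)$ via the joint chain $(X_t,\pi_t,U_t)$, and the $\eta^\gamma$-null bin boundaries ensuring the bin averages are well posed --- are exactly the ones needed.
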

A sufficient condition for the assumptions of (a) and (b) above is that for exploration (i) $\pi_0 \sim \kappa \ll \phi$, or (ii) $\pi_0=\zeta$ where $\zeta$ is the invariant measure for the hidden state process under exploration and that under $\eta^{\gamma}$ the boundary sets of the bins have measure zero, or (iii) there exists $\pi \in {\cal P}(\mathbb{X})$ such that for all $\pi_0, P(\inf \{ k > 0: \pi_k = \pi \} < \infty) = 1$. For further discussion on the initialization for the algorithm during implementation, please see \cite[Lemma 6 and Corollary 2]{creggZeroDelayNoiseless}. 


\begin{remark}
We now present a comparison between the two approaches above: filter quantization vs. finite window based learning:
\begin{itemize}
\item[(i)] For the filter quantization, we only need unique ergodicity of the filter process under the exploration policy for which asymptotic filter stability in expectation in weak or total variation is sufficient. The running cost can start immediately without waiting for a window of measurements. On the other hand, the controller must run the filter and quantize it in each iteration while running the Q-learning algorithm; accordingly the controller must know the model. Additionally, the initialization cannot be arbitrary (e.g. the initialization for the filter may be the invariant measure under the exploration policy): As noted earlier, one needs to ensure that the set of bin-action pairs which are visited infinitely often during exploration is so that an optimal policy is learned (visited infinitely often), and when this optimal policy (learned via the convergence of $Q$-learning) is implemented, the closed-loop process always remains in this set; see \cite{karayukselNonMarkovian} and \cite[Lemma 6 and Corollary 2]{creggZeroDelayNoiseless}.
\item[(ii)] For the finite window approach, a uniform convergence of filter stability, via $L^N_t$, is needed and it does not appear that only asymptotic filter stability can suffice. On the other hand, this is a universal algorithm in that the controller does not need to know the model. Furthermore, the initialization satisfaction holds under explicit conditions; notably if the hidden process is positive Harris recurrent, the ergodicity condition holds for every initialization; both the convergence of the algorithm as well as its implementation will always be well-defined.
\end{itemize}
For each setup, however, we have explicit and testable conditions.
\end{remark}


\section{The Average Cost Case}\label{averageCostSetup}

Approximations and learning for POMDPs under the average cost criterion is significantly more challenging. In the classical MDP theory, the approaches primarily require strong ergodicity or minorization conditions which are not suitable for the belief-MDP. Several papers \cite{platzman1980optimal,fernandez1990remarks,runggaldier1994approximations,hsu2006existence} have studied the average-cost control problem under the assumption that the state space is finite; they provide reachability type conditions for the belief kernels. Reference \cite{Bor00} considers the finite model setup and \cite{borkar2004further} considers the case with finite-dimensional real-valued state spaces under several technical conditions on the controlled state process and \cite{StettnerSICON19} considers several conditions directly on the filter process leading to an equi-continuity condition on the relative discounted value functions. One could adopt techniques suitable for the average cost without needing minorization conditions, see \cite{demirci2023average}. 

{\bf Average Cost via Near Optimality of Discounted Cost Policies.} Building on \cite{demirci2023average}, consider the following two conditions: (i) There exists a solution to the average cost optimality equation as in (\ref{acoe}), and (ii) that this solution is obtained via the vanishing discount method. Under these conditions, it follows that (see \cite[Theorems 1 and 2]{creggZeroDelayNoiseless}  and \cite[Theorem 7.3.6]{yuksel2020control}) a near optimal policy for the discounted cost problem is also near optimal for the average cost problem. 

Theorem \ref{mainEmre}(i) implies these conditions simultaneously. Consequently:

\begin{itemize}
\item[(i)] Accordingly, under Assumption \ref{main_assumption}, with $K_2=\frac{\alpha D (3-2\delta(Q))}{2} < 1$, by Theorem \ref{mainEmre}(i), a learning method would be to approximate the Q-learning algorithm by its classical discounted version by taking $\beta$ sufficiently large. Therefore, the methods available for discounted cost also apply to the average cost problem for near optimality. For details, please see \cite{demirci2023average}. 
\item[(ii)] Another implication of Theorem \ref{mainEmre}(i) is that, generalizing \cite{yu2008near}, one can conclude that finite window policies are near optimal for average cost problems under controlled filter stability conditions. 
\item[(iii)] A further byproduct of this approach is the complete robustness to incorrect initializations for POMDPs in average cost problems, as reported in \cite[Corollary 3.1]{demirci2023average}, connecting \cite[Theorem 3.9]{MYRobustControlledFS} with Theorem \ref{mainEmre}(i). 
\end{itemize}


\section{Conclusion}
In this article a general review on partially observable Markov Decision Processes has been presented. The focus has been on regularity (including continuity and filter stability) and associated existence results, approximate optimality via finite approximations or finite memory policies, and a rigorous analysis on reinforcement learning to near optimality.

\section{Acknowledgements}
We gratefully acknowledge our collaborations with Naci Saldi, Yunus Emre Demirci, Curtis McDonald and Tam\'as Linder; and our many discussions with Prof. Eugene Feinberg, Vivek Borkar, Erhan Bayraktar, G\"urdal Arslan, and Aditya Mahajan.

\bibliographystyle{plain}
\bibliography{SerdarBibliography}

\end{document}